\DeclareFontFamily{OT1}{pzc}{}
\DeclareFontShape{OT1}{pzc}{m}{it}%
{<-> s * [1.15] pzcmi7t}{}
\DeclareMathAlphabet{\mathpzc}{OT1}{pzc}{m}{it}
\newcommand{\mbb}[1]{\mathbb{#1}}
\newcommand{\mcal}[1]{\mathcal{#1}}
\newcommand{\NN}{\mathbb{N}}
\newcommand{\ZZ}{\mathbb{Z}}
\newcommand{\QQ}{\mathbb{Q}}
\newcommand{\CC}{\mathbb{C}}
\newcommand{\FF}{\mathbb{F}}
\newcommand{\GG}{\mathbb{G}}
\newcommand{\Gal}{\mathrm{Gal}}
\newcommand{\Ind}{\mathrm{Ind}}
\DeclareMathOperator{\im}{Im}
\DeclareMathOperator{\Hom}{Hom}
\DeclareMathOperator{\Ext}{Ext}
\DeclareMathOperator{\Spec}{Spec}
\DeclareMathOperator{\Lie}{Lie}
\DeclareMathOperator{\Mod}{Mod}
\newcommand{\GL}{\mathrm{GL}}
\DeclareMathOperator{\Tr}{\mathrm{Tr}}
\DeclareMathOperator{\Fil}{Fil}
\newcommand{\p}{\mathfrak{p}}
\newcommand{\mO}{\mathcal{O}}
\newcommand{\mP}{\mathbb{P}}
\newcommand{\mA}{\mathbb{A}}
\newcommand{\mm}{\mathfrak{m}}
\DeclareMathOperator{\End}{End}
\newcommand{\id}{\mathrm{id}}
\newcommand{\mF}{\mathcal{F}}
\DeclareMathOperator{\Frob}{Frob}
\DeclareMathOperator{\Ker}{Ker}
\DeclareMathOperator{\Coker}{Coker}
\newtheorem{thm}{Theorem}[section]
\newtheorem{prop}[thm]{Proposition}
\newtheorem{lem}[thm]{Lemma}
\newtheorem{cor}[thm]{Corollary}
\theoremstyle{remark}
\newtheorem{rmk}[thm]{Remark}
\newtheorem{eg}[thm]{Example}
\theoremstyle{definition}
\newtheorem{dfn}[thm]{Definition}
\newtheorem{notation}[thm]{Notation}
\newtheorem{construction}[thm]{Construction}
\theoremstyle{plain}
\theoremstyle{remark}
\theoremstyle{definition}
\newcommand{\mX}{\mathcal{X}}
\newcommand{\et}{\mathrm{\acute{e}t}}
\newcommand{\proet}{\mathrm{pro\acute{e}t}}
\newcommand{\proket}{\mathrm{prok\acute{e}t}}
\newcommand{\dR}{\mathrm{dR}}
\newcommand{\OBdR}{\mO\mbb{B}_\dR}
\newcommand{\BdR}{\mbb{B}_\dR}
\DeclareMathOperator{\gr}{gr}
\newcommand{\HT}{\mathrm{HT}}
\newcommand{\Fl}{{\mathscr{F}\!l}}
\DeclareMathOperator{\Spa}{Spa}
\newcommand{\TT}{\mbb{T}}
\newcommand{\VB}{V\!B}
\newcommand{\fn}{\mathfrak{n}}
\newcommand{\fb}{\mathfrak{b}}
\newcommand{\fg}{\mathfrak{g}}
\newcommand{\sm}{{\mathrm{sm}}}
\newcommand{\la}{{\mathrm{la}}}
\newcommand{\an}{{\mathrm{an}}}
\newcommand{\Kpp}{K^pK_p}
\newcommand{\fh}{\mathfrak{h}}
\newcommand{\mrm}[1]{\mathrm{#1}}
\newcommand{\mfk}[1]{\mathfrak{#1}}
\newcommand{\RHom}{R\!\Hom}
\newcommand{\mscr}[1]{\mathscr{#1}}
\newcommand{\Sym}{\mrm{Sym}}
\newcommand{\OBdRR}[1]{\OBdR^{+,\la}/\Fil^{#1}}
\newcommand{\mE}{\mcal{E}}
\newcommand{\pmD}{{}^{p}\mcal{D}}
\newcommand{\mD}{\mcal{D}}
\newcommand{\Perv}{\mrm{Perv}}
\newcommand{\Fib}{\mrm{Fib}}
\newcommand{\QCoh}{\mrm{QCoh}}
\newcommand{\VBn}{\VB^{\text{naïve}}}
\newcommand{\rla}{\mrm{rla}}
\DeclareMathOperator{\Rep}{Rep}
\newcommand{\ti}[1]{\tilde{#1}}
\newcommand{\OC}{\mO\mbb{C}}
\newcommand{\Darith}{D_{\mrm{arith}}}
\newcommand{\hatDarith}{\hat{D}_{\mrm{arith}}}
\newcommand{\hatotimes}{\mathbin{\hat{\otimes}}}
\begin{document}
\title{Theta Operator Equals Fontaine Operator on Modular Curves}
\author{Yuanyang Jiang}
\address{Institut Mathématiques d'Orsay, Université Paris-Saclay, 307 Rue Michel Magat Bâtiment 307, 91400 Orsay, France}
\email{yuanyang.jiang@universite-paris-saclay.fr}
\begin{abstract}
Inspired by \cite{Pan2209.06II},
we give a new proof that for an overconvergent modular eigenform $f$ of weight $1+k$ with $k\in\ZZ_{\ge1}$, assuming that its associated Galois representation $\rho_{f}:\Gal_{\QQ}\to \GL_{2}(\bar{\QQ}_{p})$ is irreducible, then $f$ is classical if and only if the  associated Galois representation $\rho_{f}$  is de Rham at $p$. For the proof,
we prove that theta operator $\theta^{k}$ coincides with Fontaine operator in a suitable sense. 
\end{abstract}
\maketitle

\tableofcontents
\section{Introduction}
We fix a prime number $p$. Let \(G:=\GL_{2}\). For any neat open compact subgroup $K^{p}=\prod_{l\nmid p}K_{l}\subset G(\mA_{f}^{p})$, and $K_{p}\subset G(\QQ_{p})$, let $X_{\Kpp}$ be the compactified modular curve, which is a scheme over $\QQ$, and $\mX_{\Kpp}$ be the analytification of $X_{\Kpp}\times_{\QQ}\CC_{p}$. We will fix $K^{p}$ from now on. Let $S$ be a finite set of places of $\QQ$ including $p$ and $\infty$, such that for any $l\notin S$, $K_{l}=\GL_{2}(\ZZ_{l})$, and write $K^{S}:=\prod_{l\notin S}K_{l}$. We define the Hecke algebra         \( \TT(K^{p}):=C_{c}^{\infty}(K^{p}\backslash G(\mA_{f}^{p})/K^{p},\ZZ), \)
and $\TT^{S}:=C_{c}^{\infty}(K^{S}\backslash G(\mA^{S})/K^{S},\ZZ)$. Then $\TT^{S}$ is a commutative ring freely generated by $T_{l}$ and $S_{l}^{\pm1}$ for $l\notin S$. Let $k$ be a positive integer. 

Let $\rho:\Gal_{\QQ}\to \GL_{2}(\bar{\QQ}_{p})$ be an irreducible Galois representation. If it is unramified away from finitely many places, odd and de Rham at $p$ of weight $0,k$, then Fontaine-Mazur conjecture predicts that it is associated to a modular eigenform $f\in H^{0}(\mX_{\Kpp},\omega^{k})$. 
This conjecture has been proven under certain generic conditions in \cite{Kisin2009fontaine}, \cite{Emerton2011local-global}, and is reproved in \cite{Pan2209.06II} using different method.
 
The result of this note is the following classicality theorem, which proves the conjecture for ``overconvergent" Galois representations:
\begin{thm}[Corollary \ref{corClassicality}]\label{thmMainThmClassicality}
Let $f\in M_{1+k}^{\dagger}(K^{p}):=\varinjlim_{K_{p}}M_{1+k}^{\dagger}(\Kpp)$ be an overconvergent modular $\TT^{S}$-eigenform  of weight $1+k$ with $k\in\ZZ_{\ge 1}$. Assume that its associated Galois representation $\rho_{f}:\Gal_{\QQ}\to \GL_{2}(\bar{\QQ}_{p})$ is irreducible. Then
$f$ is a classical modular form if and only if $\rho_{f}$ is de Rham at $p$. 
\end{thm}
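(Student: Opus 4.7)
The plan is to handle the two implications separately.

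For ``classical implies de Rham,'' this is standard: a classical cuspidal eigenform of weight $1+k$ with $k\geq 1$ is realized inside the étale cohomology of a Kuga-Sato variety, and Faltings' comparison theorem then yields that $\rho_{f}$ is de Rham at $p$.

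For the converse, I would argue the contrapositive: assuming $f$ is overconvergent but not classical, the goal is to show $\rho_{f}$ is not de Rham. By Coleman's classicality criterion, the $U_{p}$-slope of $f$ must be at least $k$; moreover, the theory of companion forms in the critical-slope range (using the irreducibility of $\rho_{f}$) should force $f$ into the image of the theta operator, so that $f = \theta^{k}g$ for some nonzero $g\in M^{\dagger}_{1-k}(K^{p})$. I would then invoke the main technical identification of the paper --- that $\theta^{k}$ coincides, in a suitable sense, with the Fontaine operator --- to conclude that the existence of this companion $g$ is precisely the Fontaine obstruction to $\rho_{f}$ being de Rham. Hence $\rho_{f}$ de Rham forces $g=0$, contradicting non-classicality.

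To make the identification concrete, I would realize both sides on $\mX_{\Kpp}$ following \cite{Pan2209.06II}: pull $f$ and $g$ back along the Hodge-Tate period map $\piht$ and compare with sections of $\OBdR^{+,\la}/\Fil^{k+1}$. The Fontaine operator should arise from the $t$-adic filtration on $\OBdR^{+,\la}$ and measure the obstruction to extending the Hodge filtration on $\rho_{f}\otimes\BdR$ to a de Rham lattice. The theta operator arises instead from a canonical vertical derivation along the fibers of $\piht$, acting on the pullback of $f$. The hardest step, I expect, is matching the two on the nose after trivializing on the flag variety $\Fl$: tracking the constants, twists, and normalizations so that $\theta^{k}$ and the Fontaine operator agree exactly (not merely up to a nonzero scalar or a character) is presumably where most of the geometric work of the paper lives.
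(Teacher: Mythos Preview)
Your forward direction is fine. The converse has a genuine gap, and it also misidentifies the mechanism by which $\theta^{k}$ enters.

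First, you assume $f$ has a $U_{p}$-slope and invoke Coleman's classicality criterion. But $f$ is only assumed to be a $\TT^{S}$-eigenform, and $S$ contains $p$; there is no $U_{p}$ in $\TT^{S}$, and $f$ need not be a $U_{p}$-eigenform at all. The theorem is most interesting precisely in the infinite-slope or non-$U_{p}$-eigen case --- the finite-slope case was already done by Kisin, as the paper remarks. So the opening move of your contrapositive does not apply in the generality claimed.

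Second, even granting a slope, the assertion that ``non-classical plus irreducible $\rho_{f}$ forces $f=\theta^{k}g$'' is not something you can quote; it is essentially equivalent in strength to what is being proven.

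The paper's route is structurally different: it never asks whether $f$ lies in the image of $\theta^{k}$. Instead it realizes $\rho_{f}$ inside the $\p_{f}$-isotypic part of completed cohomology, $\tilde{\rho}_{k,L}[\p_{f}]\cong\rho_{f}\otimes_{L}W$, via Eichler--Shimura and \cite{BLR91quotients}. The Hodge--Tate decomposition gives $\tilde{\rho}_{k}\hat{\otimes}\CC_{p}\cong N_{0}\oplus N_{k}(-k)$, and the Fontaine operator of $\rho_{f}$ (tensored with $\id_{W}$) becomes the map $N_{0,L}[\p_{f}]\to N_{k,L}[\p_{f}]$. The main geometric theorem identifies this with the composite $N_{0,L}[\p_{f}]\twoheadrightarrow M^{\dagger}_{1-k,L}[\p_{f}]\xrightarrow{\theta^{k}}M^{\dagger}_{1+k,L}[\p_{f}]$. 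Since $\theta^{k}$ is injective on the $[\p_{f}]$-part (by $q$-expansions and irreducibility of $\rho_{f}$), the kernel of the Fontaine operator is $H^{1}(\Fl,\omega^{1-k,\sm})_{L}[\p_{f}]$, which by Serre duality is dual to classical cusp forms with system $\p_{f}$. Because $\rho_{f}$ has distinct Hodge--Tate weights $0,k$, its Fontaine operator is between one-dimensional spaces and is thus either zero or an isomorphism; hence $\rho_{f}$ de Rham $\Leftrightarrow$ Fontaine operator zero $\Leftrightarrow$ its kernel is nonzero $\Leftrightarrow$ $\p_{f}$ is classical. A final step upgrades ``$\p_{f}$ classical'' to ``$f$ classical'' by comparing $B(\QQ_{p})\times\TT(K^{p})$-modules. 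Your sketch of the geometric identification on $\Fl$ in the last paragraph is in the right spirit for proving $\theta^{k}=$ Fontaine at the sheaf level, but it is an input to this argument, not a tool applied to $f$ and a putative $g$ directly.
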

\begin{rmk}
By ``eigenform", we refer to the eigenform 
with respect to the action of $\TT^{S}$, that is, with respect to the action of $T_{l}$ and $S_{l}$ for $l\notin S$. Note that the result is false if we only require $f$ to be a generalized eigenform. We say $f$ is a classical form if $f$ lies in the image of $M_{1+k}(K^{p}):=\varinjlim_{K_{p}}M_{1+k}(\Kpp)\hookrightarrow M^{\dagger}_{1+k}(K^{p})$.
\end{rmk}
\begin{rmk}
For those $f$ of finite slope, this result is proven in \cite{Kisin2003overconvergent}. In general,
this can be shown by combining Theorem 1.0.1 of \cite{Pan22} and Theorem 1.1.2 of \cite{Pan2209.06II}. Our note intends to give a different and simple proof.
\end{rmk}

Our proof is inspired by that of \cite{Pan2209.06II}.
Let us explain the ingredient of our proof. Given any overconvergent eigenform $f\in M_{1+k}^{\dagger}(K^{p})$, we denote its corresponding ideal in $\TT^{S}\otimes L$ as $\p_{f}$, such that the action of $\TT^{S}$ on $f$ factors as $\TT^{S}\to (\TT^{S}\otimes L)/\p_{f}\cong L$, where $L$ is a finite extension of $\QQ_{p}$. Then $f$ has the associated Galois representation $\rho_{f}:\Gal_{\QQ}\to \GL_{2}(L)$ characterized by the Eichler-Shimura relation. By assumption, $\rho_{f}$ is absolutely irreducible.

As a first step, we realize $\rho_{f}$ in Emerton's completed cohomology using the results of \cite{Pan22} as follows. 
In \cite{Emerton06}, Emerton introduces the completed cohomology         \[
        R\Gamma(K^{p},\QQ_{p}):=(R\varprojlim_{n}\varinjlim_{K_{p}}R\Gamma(X_{\Kpp}(\CC)^{\an},\ZZ/p^{n}))[1/p],
        \] and \( \tilde{H}^{i}(K^{p},\QQ_{p}):=H^{i}(R\Gamma(K^{p},\QQ_{p})), \)
        which carries the action of     \( \Gal_{\QQ}\times G(\QQ_{p})\times\TT(K^{p}) \).
         The actions of $\Gal_{\QQ}$ and of $\TT(K^{p})$ are related by the Eichler-Shimura relation.

Let $\tilde{H}^{1}(K^{p},\QQ_{p})^{\la}$ be the subspace of $G(\QQ_{p})$-locally analytic vectors, on which $\fg$ acts by taking derivative. 
The main result of \cite{Pan22} describes the $\fb$-isotypic component of $\tilde{H}^{1}(K^{p},\QQ_{p})^{\la}$. Note that we have localized at the ideal $\p_{f}\subset \TT^{S}$ to kill the contribution from $\tilde{H}^{0}$.
\begin{thm}[Theorem \ref{PanPilloni}]\label{thmPanPilloniCohomologyIntro}
For $k\in\ZZ_{\ge1}$, we denote $$(\tilde{\rho}_{k})_{\p_{f}}:=\Hom_{\fb}((k-1,0),\tilde{H}^{1}(K^{p},\QQ_{p})^{\la}_{\p_{f}}).$$ Then
 we have a $B(\QQ_{p})\times\Gal_{\QQ_{p}}\times \TT(K^{p})$-equivariant isomorphism  \begin{align*}(\tilde{\rho}_{k})_{\p_{f}}\hatotimes_{\QQ_{p}}\CC_{p}\cong (N_{0})_{\p_{f}}\oplus (N_{k})_{\p_{f}}(-k),
\end{align*} where        \(
        (N_{k})_{\p_{f}}\cong (M_{1+k}^{\dagger})_{\p_{f}},
        \) and  $(N_{0})_{\p_{f}}$ lies in an exact sequence         \[0\to \varinjlim_{K_{p}}H^{1}(\mX_{\Kpp},\omega^{1-k})_{\p_{f}}\to (N_{0})_{\p_{f}}\to (M_{1-k}^{\dagger})_{\p_{f}}\to 0 .
                \]
        Here $M_{\bullet}^{\dagger}:=\varinjlim_{K_{p}}M_{\bullet}^{\dagger}(\Kpp)$, and  $(-k)$ refers to the Tate twist.
\end{thm}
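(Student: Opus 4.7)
The plan is to deduce the statement from the main structure theorem of \cite{Pan22}, which describes the locally analytic completed cohomology $\tilde{H}^{1}(K^{p},\QQ_{p})^{\la}$ via the Hodge-Tate period map $\piht\colon \xkp^{\mathrm{perf}}\to \Fl$ connecting the infinite-level perfectoid modular curve to the flag variety $\Fl\cong \mP^{1}$. After scalar extension to $\CC_{p}$, Pan's theory expresses the $\ft$-weight components of the locally analytic cohomology in terms of (pro-)coherent sheaves on $\Fl$ arising from the Hodge filtration on the universal elliptic curve, together with equivariance data under $B(\QQ_{p})\times \Gal_{\QQ_{p}}\times \TT(K^{p})$ that keeps track of Sen weights.

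First, I would recall Pan's description: the locally analytic vectors admit a two-step filtration indexed by the Bruhat stratification of $\Fl$ into the big open cell and the closed point, with associated graded expressed in terms of the Hodge line bundle and its dual. Taking $\Hom_{\fb}((k-1,0),-)$ isolates the sections that are $\fn$-invariant of $\ft$-weight $(k-1,0)$, and this operation distributes across the two cells, producing the two summands $(N_{0})_{\p_{f}}$ and $(N_{k})_{\p_{f}}(-k)$ after $\CC_{p}$-completion.

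Second, I would identify each cell-contribution geometrically. The big open cell contribution has Sen weight $k$ and, via $\piht$, gets identified with sections of $\omega^{1+k}$ over strict neighbourhoods of the ordinary locus, giving $(N_{k})_{\p_{f}}\cong (M_{1+k}^{\dagger})_{\p_{f}}$; the Tate twist $(-k)$ records the Hodge-Tate weight. The closed-cell contribution $(N_{0})_{\p_{f}}$ has Sen weight $0$. Here a dual-BGG argument on $\Fl$ combined with Serre duality on $\xkpp$ yields the stated short exact sequence: the $H^{1}(\mX_{\Kpp},\omega^{1-k})_{\p_{f}}$ piece sits as a subspace (coming from compactly-supported sections near the ordinary locus), with quotient $(M_{1-k}^{\dagger})_{\p_{f}}$ (overconvergent forms of dual weight). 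Finally, the localization at $\p_{f}$ eliminates any Eisenstein or $\tilde{H}^{0}$ contamination because $\rho_{f}$ is absolutely irreducible, so only cuspidal Galois-irreducible systems survive.

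The main obstacle will be controlling the extension structure of $(N_{0})_{\p_{f}}$: both $H^{1}(\omega^{1-k})$ and $M_{1-k}^{\dagger}$ contribute to the same $(\ft,\fn)$-isotypic component of the closed cell, and separating them cleanly requires a careful degeneration analysis of Pan's spectral sequence from the Bruhat stratification together with the Serre duality pairing that identifies the quotient with overconvergent forms. The $(-k)$ Tate twist asymmetry between the two summands must be read off from the difference of Sen weights on the two cells, which is ultimately where the Hodge-Tate weights of the anticipated Galois representation $\rho_{f}$ enter the picture.
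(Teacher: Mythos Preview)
Your overall strategy---pass to the sheaf $\mO^{\la}$ on $\Fl$ via $\pi_{\HT}$, compute $\RHom_{\fb}((k-1,0),-)$ there, then take $R\Gamma(\Fl,-)$---is exactly what the paper does. But two points in your execution are off.

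First, you have the Bruhat cells reversed. The overconvergent modular forms $M_{1+k}^{\dagger}$ are the stalk of $\omega^{(1,-k),\sm}$ at the closed point $\infty\in\Fl$, not sections over the big open cell $U_{w}$. Under $\pi_{\HT}$, strict neighbourhoods of the ordinary locus in $\mX_{K^{p}}$ map to neighbourhoods of $\infty$, so $\mcal{N}_{k}\cong i_{*}i^{*}\omega^{(1,-k),\sm}[-1]$ is supported at the closed point. Likewise, the piece $\omega^{(1-k,0),\sm}$ that produces $H^{1}(\mX_{\Kpp},\omega^{1-k})$ after taking $R\Gamma$ is a global sheaf on $\Fl$, not something coming from compactly supported sections near the ordinary locus.

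Second, the splitting into $N_{0}$ and $N_{k}$ does not arise from the Bruhat stratification at all; it is the eigenspace decomposition for the arithmetic Sen operator $\Theta$ (equivalently, for the horizontal $\fh$-action) on $\RHom_{\fb}((k-1,0),\mO^{\la})$. Pilloni's theorem (the paper's Theorem~\ref{mainthmPanPilloni}) already gives this as a direct-sum decomposition of sheaves on $\Fl$, together with the distinguished triangle $\omega^{(1-k,0),\sm}\to\mcal{N}_{0}\to i_{*}\omega^{(1-k,0),\sm}[-1]$. There is no spectral sequence to degenerate and no dual-BGG argument to run: one simply applies $R\Gamma(\Fl,-)$, uses $\tilde{H}^{i}(K^{p},\QQ_{p})^{\la}\hat{\otimes}\CC_{p}\cong H^{i}(\Fl,\mO^{\la})$ (Pan, Theorem~4.4.6), and localizes at $\p_{f}$ to force $R\Gamma(\Fl,\omega^{(1-k,0),\sm})_{\p_{f}}$ into degree~$1$ (since $H^{0}$ consists of constants when $k=1$ and vanishes for $k\ge2$). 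The exact sequence for $(N_{0})_{\p_{f}}$ then drops out directly.
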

We know that $(N_{k})_{\p_{f}}[\p_{f}]\ne 0$ and thus         \( \tilde{\rho}_{k,L}[\p_{f}]:=(\tilde{\rho}_{k})_{\p_{f}}[\p_{f}]\otimes_{\QQ_{p}}L\ne 0 \).
         By the Eichler-Shimura relation and \cite{BLR91quotients}, we know that $\tilde{\rho}_{k,L}[\p_{f}]\cong\rho_{f}\otimes_{L}W$, where $W$ is a topological $\TT(K^{p})\times B(\QQ_{p})$-module.

So far, we have realized $\rho_{f}$ in the completed cohomology. 
The isomorphism above shows that $\rho_{f}$ is Hodge-Tate of weight $0,k$, where by our convention, the cyclotomic character has Hodge-Tate weight $-1$. 
Let us denote by \(\Theta\) the Sen operator acting on \(\rho_{f}\otimes\CC_{p}\) (\cite{Sen1973lie}), then \[\rho_{f}\otimes\CC_{p}\cong (\rho_{f}\otimes\CC_{p})^{\Theta=0}\oplus (\rho_{f}\otimes\CC_{p})^{\Theta=-k}.
\]

For such $\rho_{f}$, its de Rhamness is characterized by its Fontaine operator (\cite{Fontaine2004arithmetique}), which is a morphism         \[
        N^{k}: (\rho_{f}\otimes\CC_{p})^{\Theta=0}\to (\rho_{f}\otimes \CC_{p})^{\Theta=-k}(k) .
        \]   By \cite{Fontaine2004arithmetique}, $\rho_{f}$ is de Rham if and only if $N^{k}=0$. The same argument also applies 
        to the infinite dimensional representation $(\tilde{\rho}_{k})_{\p_{f}}$, and we are led to study the Fontaine operator of $(\tilde{\rho}_{k})_{\p_{f}}$. The following result describes the Fontaine operator in terms of the classical theta operator:
\begin{thm}[Theorem \ref{mainthmFontaine=ThetaCohomologyVer}]\label{thmFontaine=ThetaIntroCohomologyVer}
In terms of the isomorphism in Theorem \ref{thmPanPilloniCohomologyIntro},
 the Fontaine operator $N^{k}:((\tilde{\rho}_{k})_{\p_{f}}\hatotimes_{\QQ_{p}}\CC_{p})^{\Theta=0}\to ((\tilde{\rho}_{k})_{\p_{f}}\hatotimes_{\QQ_{p}}\CC_{p})^{\Theta=-k}(k) $ is given by         \[
         N^{k}:(N_{0})_{\p_{f}}\to (M^{\dagger}_{1-k})_{\p_{f}}\xrightarrow{\theta^{k}}(M^{\dagger}_{1+k})_{\p_{f}}\cong (N_{k})_{\p_{f}} ,
         \] where $\theta^{k}$ is the theta operator as in \cite[Section 4]{Coleman1996classical}.
 \end{thm}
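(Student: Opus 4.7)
The plan is to compute the Fontaine operator $N^{k}$ on the infinite dimensional representation $(\tilde{\rho}_{k})_{\p_{f}}$ directly from Pan's geometric model for $\tilde{H}^{1}(K^{p},\QQ_{p})^{\la}_{\p_{f}}$, and then observe that on the $\fb$-isotypic component of weight $(k-1,0)$, the resulting connecting map is literally the $k$-th iterate of a Gauss--Manin / Kodaira--Spencer map, which coincides with Coleman's theta operator $\theta^{k}$.

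First, I would upgrade the Pan--Pilloni isomorphism from Theorem~\ref{thmPanPilloniCohomologyIntro} to a filtered isomorphism of $\BdRkk$-modules by computing
\[
\Hom_{\fb}\bigl((k-1,0),\, \tilde{H}^{1}(K^{p},\QQ_{p})^{\la} \hat{\otimes}_{\QQ_{p}} \BdRkk\bigr)_{\p_{f}}
\]
via the pro-étale period sheaf $\OBdR^{+,\la}/\Fil^{k+1}$ on $\mX_{K^{p}}$. The Hodge--Tate filtration on this period sheaf has graded pieces $\omega^{-j}$ for $0 \le j \le k$, and restricting to $\fb$-isotypic vectors of weight $(k-1,0)$ should produce a filtered $\BdRkk$-module whose associated graded recovers $(N_{0})_{\p_{f}}$ in weight $0$ and $(N_{k})_{\p_{f}}(-k)$ in weight $-k$.

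Next, I would read off $N^{k}$ from this filtered module. By construction, the arithmetic Sen operator $\Theta$ has generalized eigenvalues $0$ and $-k$ on the module, and by the definition in \cite{Fontaine2004arithmetique}, $N^{k}$ is the connecting map from the generalized $0$-eigenspace to the generalized $(-k)$-eigenspace (Tate-twisted back by $k$) induced by the unipotent part of $\Theta$. Geometrically, this unipotent part arises from the connection $\nabla$ on $\OBdR$, and its $k$-th iterate on $\fb$-isotypic vectors of weight $(k-1,0)$ corresponds, through iterated Kodaira--Spencer, to the composition $\omega^{1-k} \to \omega^{3-k} \to \cdots \to \omega^{1+k}$; this is exactly $\theta^{k}$ in the sense of \cite[Section 4]{Coleman1996classical}, which on $q$-expansions reads as $(q\partial_{q})^{k}$. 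The subspace $\varinjlim_{K_{p}} H^{1}(\mX_{\Kpp},\omega^{1-k})_{\p_{f}} \subset (N_{0})_{\p_{f}}$ then maps to zero under $N^{k}$ because it already lifts to a $\BdRkk$-filtered structure with trivial extension class: its image in the weight-$(-k)$ piece sits in the Hodge filtration coming from algebraic de Rham cohomology and is annihilated by the nilpotent part of $\Theta$.

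The main obstacle is the identification in the previous paragraph of the unipotent part of $\Theta$ at weight $(k-1,0)$ with Coleman's $\theta^{k}$ as a classical operator on overconvergent modular forms. This is not formal: one must track Pan's geometric model carefully through the Hodge--Tate period map, compute the action of $t\nabla$ on $\OBdR^{+,\la}/\Fil^{k+1}$ restricted to the correct Borel isotypic piece, and verify the match with $\theta^{k}$ on $q$-expansions, including the precise normalization. A secondary obstacle is the vanishing statement on the $H^{1}(\mX_{\Kpp},\omega^{1-k})$ subspace, which requires a Hodge-theoretic argument to identify this contribution as arising from an already-de Rham part of the Galois representation.
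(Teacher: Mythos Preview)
Your proposal has the right architectural idea---resolve $\BdR^{+,\la}/t^{k+1}$ by the $\OBdR^{+,\la}$ de Rham complex and compute there---but contains a conceptual error and omits the key technical input.

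The error is the sentence ``Geometrically, this unipotent part arises from the connection $\nabla$ on $\OBdR$.'' This is false. The Fontaine operator $N^{k}$ is the connecting map induced by the \emph{arithmetic Sen operator} $\Theta$, which comes from differentiating the Galois action of $\Gal(\QQ_{p}(\zeta_{p^{\infty}})/\QQ_{p})$ and has nothing a priori to do with $\nabla$. The connection $\nabla$ enters only as the differential in the resolution $0\to \BdR^{+,\la}/t^{k+1}\to \OBdRR{k+1}\xrightarrow{\nabla}\OBdRR{k}\otimes\Omega^{1,\sm}_{\log}\to 0$; the Sen operator $\Theta$ acts separately on each term. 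So your ``$k$-th iterate of $\nabla$'' picture is not what is happening: one must compute $\Theta$ (not $\nabla$) on $\mE_{(k-1,0)}(\OBdRR{k+1})$, and this requires a genuinely different calculation, namely the structure of the Faltings extension (Proposition~\ref{propFaltingsExt}), which the paper reduces to a computation on the flag variety via the functor $\VB$.

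The omitted input is how one pins down the remaining ambiguity. After resolving by $\OBdR$, one obtains a canonical splitting $H^{1}(\mE_{(k-1,0)}(\OBdR^{+,\la}/(t+\Fil^{k+1})))\cong i_{*}\Sym^{k-1}D^{\sm}\oplus i_{*}\omega^{(1,-k),\sm}$ coming from $\Ker(N^{k}_{k})$, and one must identify $\nabla$ restricted to the first summand with the Gauss--Manin connection. This is not a $q$-expansion check: the paper's argument (Proposition~\ref{propNabla=GM}) is that the difference $\nabla-\nabla_{\mathrm{GM}}$ is a $B(\QQ_{p})$-equivariant $\mO^{\sm}$-linear map $i_{*}\mO^{\sm}\to i_{*}\Omega^{1,\sm}_{\log}$, hence corresponds to a $B(\QQ_{p})$-invariant overconvergent form of weight $2$, which is forced to vanish (Lemma~\ref{lemNoBinv}). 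This $B(\QQ_{p})$-equivariance trick is the heart of the proof and is entirely absent from your proposal. Once this is in hand, the inclusion $\iota$ of $\BdR$ into $\OBdR$ is identified with the BGG lifting, and $N^{k}=N^{k}_{k}\circ\iota$ gives $\theta^{k}$.

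Finally, the paper works sheaf-theoretically on $\Fl$ (Theorem~\ref{thmFontaine=Theta}) and only afterwards takes $R\Gamma(\Fl,-)_{\p_{f}}$ via primitive comparison to deduce the cohomological statement. Your proposal works directly on completed cohomology, which loses the perverse $t$-structure and the local $B(\QQ_{p})$-equivariance that make the argument go through.
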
 
Given Theorem \ref{thmFontaine=ThetaIntroCohomologyVer}, 
we can describe the Fontaine operator for $\tilde{\rho}_{k,L}[\p_{f}]$ as \[N_{0,L}[\p_{f}]\to M^{\dagger}_{1-k,L}[\p_{f}]\xrightarrow{\theta^{k}}  M^{\dagger}_{1+k,L}[\p_{f}],\] with $(-)_{L}$ standing $-\hatotimes_{\QQ_{p}}L$.
By the $q$-expansion principle,  $ M^{\dagger}_{1-k,L}[\p_{f}]\xrightarrow{\theta^{k}}  M^{\dagger}_{1+k,L}[\p_{f}]$ is injective, so the kernel of $N^{k}$ is given by $ H^{1}(\Fl,\omega^{1-k,\sm})_{L}[\p_{f}]$ by Theorem \ref{thmPanPilloniCohomologyIntro}. Hence $\rho_{f}$ is de Rham if and only if $H^{1}(\Fl,\omega^{1-k,\sm})_{L}[\p_{f}]\ne 0$. This implies the classicality of $\p_{f}$.

 Now we sketch a proof of Theorem \ref{thmFontaine=ThetaIntroCohomologyVer}. The idea is to realize the Fontaine operator geometrically as in \cite{Pan2209.06II}. In
  \cite{Scholze15}, Scholze  introduces the perfectoid modular curve at the infinite level        \(
                \mX_{K^{p}}\sim \varprojlim_{K_{p}}\mX_{\Kpp} ,
                \) which carries an "affinoid" Hodge-Tate map $\pi_{\HT}:\mX_{K^{p}}\to \Fl\cong\mP^{1}$. Moreover, Scholze shows that $\mX_{K^{p}}$ is related to Emerton's completed cohomology  as      \[
                        R\Gamma(K^{p},\QQ_{p})\hatotimes\CC_{p}\cong R\Gamma(\mX_{K^{p},\an},\mO_{\mX_{K^{p}}})\cong R\Gamma(\Fl_{\an},\hat{\mO}) ,
                        \] where $\hat{\mO}:=\pi_{\HT,*}\mO_{\mX_{K^{p}}}$. 
 We can further consider the subsheaf $\mO^{\la}\subset \hat{\mO}$ consisting of $\GL_{2}(\QQ_{p})$-locally analytic vectors. By Theorem 4.4.6 of \cite{Pan22}, we have \begin{equation}
 \label{equationla=Ola}
         \tilde{H}^{i}(K^{p},\QQ_{p})^{\la}\hatotimes\CC_{p}\cong H^{i}(\Fl,\mO^{\la}) ,
 \end{equation} and thus         \[
  \RHom_{\fb}((k-1,0),R\Gamma(K^{p},\QQ_{p})^{\la})\hatotimes\CC_{p}\cong R\Gamma(\Fl,\RHom_{\fb}((k-1,0),\mO^{\la})    )     .
         \] 
\begin{dfn}\label{dfnOmegaLSM}
For a neat open compact subgroup \(K\subset G(\QQ_{p})\), we denote by \(\pi_{K}\) the natural morphism \(\mX_{K^{p}}\to \mX_{K}\), and denote by \(\pi_{K}^{*}(\omega^{\ell}_{\mX_{K}})^{\sm}\) the subsheaf of \(\pi_{K}^{*}(\omega^{\ell}_{\mX_{K}})\) consisting of \(G(\QQ_{p})\)-smooth vectors. We further denote
 by \(\omega^{\ell,\sm}:=\pi_{\HT,*}(\pi_{K}^{*}(\omega^{\ell}_{\mX_{K}})^{\sm})\in D(\Fl_{\an})\). 
 
 Then \(H^{0}(\Fl,\omega^{\ell,\sm})\) (resp. $H^{0}(\Fl,i_{*}i^{-1}\omega^{\ell,\sm})$)  is the space of modular forms (resp. overconvergent modular forms) of weight $\ell$ (with tame level $K^{p}$ and arbitrary level at $p$).
\end{dfn}

In \cite{Pilloni22}, Pilloni computes explicitly $\RHom_{\fb}((k-1,0),\mO^{\la})$ in terms of $\omega^{\ell,\sm}$ as follows:
\begin{thm}[Theorem \ref{mainthmPanPilloni}]\label{thmPilloniIntro}
For $k\in\ZZ_{\ge 1}$, we have a $B(\QQ_{p})\times \Gal_{\QQ_{p}}\times \TT(K^{p})$-equivariant isomorphism in $D(\Fl_{\an})$        \[
        \RHom_{\fb}((k-1,0),\mO^{\la})\otimes\chi^{(1-k,0)}\cong \mcal{N}_{0}\oplus \mcal{N}_{k}(-k) ,
        \] where $\mcal{N}_{k}\cong i_{*}i^{-1}\omega^{1+k,\sm}[-1]$, and $\mcal{N}_{0}$ lies in a distinguished triangle         \[
                \omega^{1-k,\sm}\to \mcal{N}_{0}\to i_{*}i^{-1}\omega^{1-k,\sm}[-1]\xrightarrow{+1},
                \]
 where $\chi^{(1-k,0),\sm}$ denotes a twist of $B(\QQ_{p})$-action,  
 $i$ is the embedding of $\infty$ into $\Fl$, and $i^{-1}$ is taking the stalk at $\infty$. Moreover,
  taking $R\Gamma(\Fl,-)_{\p_{f}}$, we get back the isomorphism in Theorem \ref{thmPanPilloniCohomologyIntro}.
\end{thm}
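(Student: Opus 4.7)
The plan is to reduce Theorem \ref{thmPilloniIntro} to the local sheaf-theoretic computation of $\mO^{\la}$ on $\Fl$ carried out in \cite{Pilloni22}, and then verify that taking global sections recovers Theorem \ref{thmPanPilloniCohomologyIntro}. Writing $\fb = \fn \oplus \fh$ where $\fn$ is the one-dimensional unipotent radical generated by a nilpotent $n$ and $\fh$ is the Cartan, we have
\[
\RHom_{\fb}\bigl((k-1,0),\,\mO^{\la}\bigr) \;\cong\; \RHom_{\fh}\bigl((k-1,0),\, [\mO^{\la} \xrightarrow{n} \mO^{\la}]\bigr),
\]
reducing the problem to identifying $\ker n$ and $\cok n$ on $\mO^{\la}$ as $B(\QQ_{p})$-equivariant sheaves on $\Fl$, and then extracting the $(k-1,0)$-isotypic component under $\fh$.

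The key local input is Pilloni's explicit description of $\mO^{\la}$. On the open Bruhat cell $U := \Fl \setminus \{\infty\}$, the action of $n$ is well-controlled, and both $\ker n|_U$ and $\cok n|_U$ decompose under $\fh$ into pieces of each torus weight, canonically identified with $\omega^{\ell,\sm}|_U$ for $\ell \in \ZZ$. At the $B$-fixed point $\infty$—which under the Hodge-Tate map corresponds to the ordinary locus on the infinite-level modular curve—the locally analytic stalk $\mO^{\la}_{\infty}$ is larger, and its $\fn$-cohomology produces additional skyscraper contributions naturally identified with the overconvergent stalks $i^{*}\omega^{\ell,\sm}$.

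Selecting the $(k-1,0)$-eigenspace under $\fh$ (after the twist by $\chi^{(1-k,0)}$) isolates exactly two Hodge-Tate strata. The weight-$0$ stratum contributes $\omega^{1-k,\sm}$ globally together with its overconvergent enhancement at $\infty$; these assemble via the excision triangle on $\Fl$ into $\mcal{N}_0$. The weight-$k$ stratum is supported entirely at $\infty$ (only $\cok n$, not $\ker n$, contributes in this range) and yields $i_* i^* \omega^{1+k,\sm}[-1]$, with the Tate twist $(-k)$ on $\mcal{N}_k$ arising because the identification between the weight-$k$ piece of $\mO^{\la}$ and the smooth modular sheaf of weight $1+k$ is mediated by $t^k$, and $t \in \CC_p(1)$.

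For the verification of Theorem \ref{thmPanPilloniCohomologyIntro}, apply $R\Gamma(\Fl,-)_{\p_{f}}$: using \eqref{equationla=Ola}, the left side becomes $(\tilde{\rho}_{k})_{\p_{f}}\hat{\otimes}\CC_{p}$ in cohomological degree $1$, while on the right the standard identifications $R\Gamma(\Fl, \omega^{\ell,\sm}) \cong \varinjlim_{K_{p}} R\Gamma(\mX_{\Kpp}, \omega^{\ell})$ and $H^0(\Fl, i_* i^* \omega^{\ell,\sm}) \cong M^{\dagger}_{\ell}$ produce the exact sequence for $(N_{0})_{\p_{f}}$ and the identification $(N_{k})_{\p_{f}} \cong (M^{\dagger}_{1+k})_{\p_{f}}$; localization at $\p_{f}$ kills the $\tilde{H}^{0}$ contribution since $\rho_{f}$ is two-dimensional irreducible. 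The principal obstacle is the sheaf-level input in the second paragraph above: this is the core of \cite{Pilloni22} and rests on delicate locally analytic analysis on the infinite-level modular curve, especially near $\infty$ where overconvergence manifests.
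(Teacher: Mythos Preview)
Your overall strategy is correct and matches what the paper does: the theorem is not proved from scratch here but is imported from \cite{Pilloni22}, and the ``moreover'' clause is verified exactly as you describe---apply $R\Gamma(\Fl,-)$, use \eqref{equationla=Ola}, and identify $R\Gamma(\Fl,\omega^{\ell,\sm})$ and $H^{0}(\infty,i^{*}\omega^{\ell,\sm})$ with classical and overconvergent forms respectively (this is the content of Theorem~\ref{PanPilloni}).

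Two points of comparison and one correction. First, the paper's appendix packages the local input differently from your $\RHom_{\fh}\circ R\Gamma(\fn,-)$ factorisation: it introduces a functor $\VB$ from equivariant sheaves on $\Fl$ to $\mO^{\sm}$-modules, satisfying $\VB(C^{\la})\cong\mO^{\la}$, so that the computation of $\RHom_{\fb}((k-1,0),\mO^{\la})$ is transported to the purely group-theoretic computation of $\RHom_{(\fb,*_{2})}((k-1,0),C^{\la})$ on the flag variety (Theorem~\ref{mainthmPilloniOnFl}), and only then pushed to the modular curve. Your more direct route via $\ker n$ and $\cok n$ is closer in spirit to \cite{Pan22}; both are valid, but the $\VB$ formalism makes the equivariance bookkeeping cleaner and is what the paper relies on when it later computes Faltings's extension and the Fontaine operator.

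Second, your statement that $\mcal{N}_{0}$ is ``assembled via the excision triangle'' is not right. The excision triangle $j_{!}j^{*}\omega^{1-k,\sm}\to\omega^{1-k,\sm}\to i_{*}i^{*}\omega^{1-k,\sm}$ exhibits $\omega^{1-k,\sm}$ itself as an extension, whereas the triangle for $\mcal{N}_{0}$ has $\omega^{1-k,\sm}$ as the \emph{sub}object and $i_{*}i^{*}\omega^{1-k,\sm}[-1]$ as the quotient---a genuinely different (and generally non-split) perverse extension whose class encodes the relation between classical and overconvergent forms. Relatedly, your description of the $\fh$-weight decomposition of $\ker n|_{U}$ and $\cok n|_{U}$ glosses over substantial cancellation: since $\fh$ is two-dimensional, a na\"ive count would place $\RHom_{\fb}$ in degrees $0$--$3$, and the fact that it lands in degrees $0$--$1$ is part of the nontrivial content of \cite{Pilloni22} rather than an immediate consequence of the stratification.
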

\begin{rmk}\label{rmkPerversity}
We will prove that         \( \mcal{N}_{0} \) and     \( \mcal{N}_{k} \) are perverse sheaves on $\Fl$ (in the sense of Definition \ref{dfnPerverseShv}). We expect this to be true for general Shimura varieties, which hopefully will allow us to construct a finer version of Fontaine operators. 
 \end{rmk} 
 Following the construction of the Fontaine operator, it turns out that we can define a "geometric Fontaine operator" between perverse sheaves
        \[
        N^{k}:\mcal{N}_{0}\to \mcal{N}_{k} ,
         \] which when taking $R\Gamma(\Fl,-)_{\p_{f}}$ gives back the classical Fontaine operator in
         Theorem \ref{thmFontaine=ThetaIntroCohomologyVer}.

         We describe this geometric Fontaine operator $N^{k}$ in this note:

\begin{thm}[Theorem \ref{thmFontaine=Theta}]
\label{thmFontaine=ThetaIntro}
In terms of the isomorphism in Theorem \ref{thmPilloniIntro},
 the geometric Fontaine operator $N^{k}:\mcal{N}_{0}\to \mcal{N}_{k} $ is given by         \[
         N^{k}:\mcal{N}_{0}\to i_{*}i^{-1}\omega^{1-k,\sm}[-1]\xrightarrow{\theta^{k}}i_{*}i^{-1}\omega^{1+k,\sm}[-1]\cong \mcal{N}_{k}  ,
         \] where $\theta^{k}$ is the classical theta operator as in \cite{Coleman1996classical}.
\end{thm}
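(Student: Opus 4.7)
The plan is to reduce Theorem \ref{thmFontaine=ThetaIntro} to an explicit local computation at the unique $B(\QQ_p)$-fixed point $\infty \in \Fl$ and then match the resulting map on stalks with the $q$-expansion formula for $\theta^k$.

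First, I would observe that $N^k$ automatically factors through the quotient $\mcal{N}_0 \twoheadrightarrow i_*i^*\omega^{1-k,\sm}[-1]$. This is a Hom-vanishing argument: since $\mcal{N}_k = i_*i^*\omega^{1+k,\sm}[-1]$ is supported at $\infty$ in cohomological degree $1$, while $\omega^{1-k,\sm} \subset \mcal{N}_0$ is a sheaf on all of $\Fl$ concentrated in degree $0$, adjunction together with the stalk computation give
\[
\Hom_{D^b(\Fl)}\bigl(\omega^{1-k,\sm}, \mcal{N}_k\bigr) \cong \Hom_L\bigl(M^{\dagger}_{1-k}, M^{\dagger}_{1+k}[-1]\bigr) = 0.
\]
Hence $N^k$ is determined by a single $B(\QQ_p)\times\TT(K^p)$-equivariant $L$-linear map on stalks at $\infty$,
\[
\bar{N}^k : M^{\dagger}_{1-k} \longrightarrow M^{\dagger}_{1+k}.
\]

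Second, I would compute $\bar{N}^k$ in a local model near $\infty$. On the anticanonical tower above a small neighborhood $U \subset \Fl$ of $\infty$, Pilloni's description identifies $\omega^{\ell,\sm}|_U$ with the $\chi^{(\ell,0)}$-isotypic subsheaf of $\mO^{\la}|_U$, and the canonical filtration on $\BdR^{+,\la}/t^{k+1}$ with graded pieces $\mO^{\la}(i)$ for $0 \le i \le k$ becomes explicit in these coordinates. Applying $\mcal{E}_{(k-1,0)}$ and passing to stalks at $\infty$ realizes the extension defining $N^k$ as a concrete extension between spaces of overconvergent forms, with the Sen operator $\Theta$ acting triangularly on the associated graded pieces; the operator $\bar{N}^k$ is then the connecting homomorphism produced by $\Theta$.

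Third, I would identify this connecting map with $\theta^k$. Classically, $\theta = q\,d/dq$ on $q$-expansions, and it arises geometrically as the Gauss-Manin connection on the relative de Rham cohomology composed with the unit-root splitting of the Hodge filtration over the ordinary locus. The key input on the $p$-adic side is that the Sen operator on $\BdR^{+,\la}/t^{k+1}$ is the $p$-adic incarnation of the Gauss-Manin connection on the canonical $B_{\dR}^{+}$-thickening of the universal elliptic curve, via Faltings' comparison between Sen theory and de Rham cohomology. Iterating $k$ times and projecting to the bottom graded piece therefore recovers $\theta^k$.

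The main obstacle lies in Step 3: one has to make the identification between the Sen operator and the Gauss-Manin connection precise at the level of perverse sheaves on $\Fl$, while carefully tracking the various Tate twists, the character twist $\chi^{(k-1,0)}$, and the comparison between Pilloni's description of $\mO^{\la}$ and the classical automorphic sheaves $\omega^{\ell,\sm}$ on the anticanonical tower. Once these identifications are set up cleanly, the equality $\bar{N}^k = \theta^k$ reduces to a direct computation of $\Theta$ on a generating section in Hodge-Tate local coordinates, which recovers $(q\,d/dq)^k$ on $q$-expansions.
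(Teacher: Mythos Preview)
Your Step 1 is sound and essentially agrees with the paper's reduction (the paper phrases it as ``$\mcal{N}_{k}$ is concentrated in degree $1$, so $N^{k}$ factors through $H^{1}(\mcal{N}_{0})[-1]$''). The gap is in Step 3.

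You assert that the Sen operator on $\BdR^{+,\la}/t^{k+1}$ is ``the $p$-adic incarnation of the Gauss--Manin connection'' and that after setting up identifications the equality $\bar{N}^{k}=\theta^{k}$ reduces to a computation in local coordinates recovering $(q\,d/dq)^{k}$. But this is precisely the content of the theorem, not a method for proving it. The Sen operator comes from the Galois action, while $\nabla_{\mrm{GM}}$ comes from the coherent geometry of the modular curve; there is no a priori reason these agree on the nose, and a direct $q$-expansion computation of $\Theta$ from the Galois side is not available. What you would need is a mechanism that transports $\Theta$ into the world where $\nabla_{\mrm{GM}}$ lives, and then an argument that pins down the resulting map among all $B(\QQ_{p})$-equivariant maps $M^{\dagger}_{1-k}\to M^{\dagger}_{1+k}$ (a space that is not one-dimensional).

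The paper supplies exactly these two missing ingredients. First, it linearizes via the $\OBdR^{+,\la}$ resolution $0\to\BdR^{+,\la}/t^{k+1}\to\OBdRR{k+1}\xrightarrow{\nabla}\OBdRR{k}\otimes\Omega^{1,\sm}_{\log}\to 0$, so that the Fontaine operator $N^{k}$ factors as $N^{k}_{k}\circ\iota$ where $N^{k}_{k}$ is $\mO^{\sm}$-linear and computable from the Faltings extension. This produces a canonical splitting of $H^{1}(\mE_{(k-1,0)}(\OBdR^{+,\la}/(t+\Fil^{k+1})))$, and the component of $\iota$ in this splitting is a derivation. Second --- and this is the key idea you are missing --- the paper pins down $\nabla$ on the relevant piece not by computation but by $B(\QQ_{p})$-equivariance: $\nabla-\nabla_{\mrm{GM}}$ is an $\mO^{\sm}$-linear $B(\QQ_{p})$-equivariant map $i_{*}\mO^{\sm}\to i_{*}\Omega^{1,\sm}_{\log}$, hence corresponds to a $B(\QQ_{p})$-invariant overconvergent form of weight $2$, which is forced to vanish by a short $q$-expansion argument (any such form is constant, hence zero in nonzero weight). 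The only place $q$-expansions enter is this vanishing lemma, not a direct computation of $\Theta$.
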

Theorem \ref{thmFontaine=ThetaIntroCohomologyVer} follows from Theorem \ref{thmFontaine=ThetaIntro}
by primitive comparison (Theorem \ref{thmPrimitiveComparison}), so it reduces to proving Theorem \ref{thmFontaine=ThetaIntro}. 
The idea of proving Theorem \ref{thmFontaine=ThetaIntro} is to observe that $N^{k}$ is some kind of differential operator, and is also $B(\QQ_{p})$-equivariant. The two conditions actually uniquely pin down the morphism, which follows from the simple observation that for $\ell\ne \ell'$, there is no $B(\QQ_{p})$-equivariant $\mO^{\sm}$-linear morphism from $i_{*}i^{-1}\omega^{\ell,\sm}\to i_{*}i^{-1}\omega^{\ell',\sm}$.

\begin{rmk}[Comparison with \cite{Pan2209.06II}]
Let us stress that our proof is very much inspired by that of \cite{Pan2209.06II}. The comparison with Pan's proof will be given in Remark \ref{rmkComparisonPan}. The main difference is that one obtain more symmetries and structures
after taking $\fb$-cohomology, which is an interesting phenomenon by itself, and is generalizable to general Shimura varieties.
\end{rmk}

\subsection{Organization}\label{subsectionOrganization} 
We sketch the structure of the paper. Readers should refer to the beginning of each section for details.
In Section \ref{sectionGeomSen}, we will fix the set-up and recall the results of geometric Sen theory from \cite{Pan22}, \cite{Pilloni22}, \cite{Juan2022.05GeoSen}, \cite{Juan2022.09locallyShi}. 
Section \ref{secGeoFontaine} will compute the \(\fb\)-cohomology (Theorem \ref{thmPilloniIntro}), define the \emph{geometric Fontaine operator} (Corollary \ref{corGeomFontaine}), and state the main theorem (Theorem \ref{thmFontaine=ThetaIntro}). The rest of the section will involve some reduction of the proof. 
Section \ref{sectionPfofMainThmdeRhamSheaves} will finish the proof of Theorem \ref{thmFontaine=ThetaIntro}.
Finally, in Section \ref{sectionArithCor}, we finish the proof of Theorem \ref{thmMainThmClassicality}.





\subsection{Notations and conventions}\label{subsecNotationsAndConventions}
  Let us fix some notation. We fix a prime $p$, and write $\CC_{p}$ for the completion of $\bar{\QQ}_{p}$. We fix a compatible system of \(p^{n}\)-th roots of unity. We denote by \(\QQ_{p}(\zeta_{p^{\infty}})\) the algebraic extension of \(\QQ_{p}\) by \(\zeta_{p^{n}}\) for all \(n\), and by \(\QQ_{p,\infty}\) its \(p\)-adic completion. 
  
  Let $G:=\GL_{2}$. Let $B\subset G$ be the Borel subgroup of upper-triangular matrices, and denote its Levi decomposition as $B=TU$,  
where $T$ is the subgroup of diagonal matrices. Let $\fg:=\mfk{gl}_{2}(\QQ_{p})$, and $\fb:=\Lie B$. For $(a,b)\in\CC_{p}^{\oplus 2}$, denote by $(a,b)$ the character of $\fb$,       \((a,b): \begin{pmatrix}
x & y\\0 & z
\end{pmatrix}\mapsto ax+bz. \) For $(a,b)\in\ZZ^{\oplus2}$, we denote by $\chi^{(a,b)}$ the character of $B(\QQ_{p})$ mapping $\chi^{(a,b)}:\begin{pmatrix}
x& y \\0&z
\end{pmatrix}\mapsto x^{a}z^{b}$, and we will write $-\otimes\chi^{(a,b)}$ to mean twisting $B(\QQ_{p})$-action by $\chi^{(a,b)}$. 

For any abelian category $\mcal{A}$ (or some stable $\infty$-category) and for any $X, Y\in\mcal{A}$, we will write $[X-Y]$ to refer to some extension of $Y$ by $X$ in $\mcal{A}$. More precisely, if we write $Z\cong [X-Y]$, this means that there exists a short exact sequence (or a distinguished triangle)       \[
         X\to Z\to Y\xrightarrow{+1}
        \] in $\mcal{A}$. In this note, we will use this notation when $\mcal{A}=\mrm{Perv}$, that is, the category of perverse sheaves (as in Definition \ref{dfnPerverseShv}).

We will use both perverse t-structure and 
natural t-structure of derived category $D(\Fl_{\an})$ of sheaves on the flag variety. For any $F\in D(\Fl_{\an})$, we denote by $H^{i}(F)$ its cohomology with respect to the natural t-structure, which is a sheaf on $\Fl_{\an}$.

We will write $(-)$ to mean Tate twist of Galois action.
By convention, the cyclotomic character $\chi_{\mrm{cycl}}:\Gal_{\QQ_{p}}\to \QQ_{p}^{\times}$ is defined to have Hodge-Tate weight $-1$.

In order to simplify some cohomological argument,
we will use the solid formalism of \cite{CS19} for dealing with topological vector spaces. In particular, sheaves have values in \(\QQ_{p}\)-solid spaces without assuming otherwise. Let \(\mF\) be such a sheaf on a site \(\tilde{X}\). Then we denote by \(H^{0}(\tilde{X},\mF)\) the space of its global sections, and if \(S\) is a profinite set, we denote by \(\mF(S)\) the sheaf on \(\ti{X}\) valued in (non-condensed) vector spaces, sending \(U\to H^{0}(U,\mF)[S]\).

For complete Huber pairs \((R,R^{+})\) with a pseudo-uniformizer \(\varpi\), 
\(R\) and \(R^{+}\) are regarded as condensed rings using \(\varpi\)-adic topology, and \(R\) is regarded as an analytic ring as  \(R_{\square}:=(R,R^{+})_{\square}\).
Throughout the paper, \(\ZZ_{p}\), \(\QQ_{p}\) and \(\CC_{p}\) are regarded as analytic rings via the analytic structure induced from \(\ZZ_{p,\square}:=(\ZZ_{p},\ZZ_{p})_{\square}\).

We will work with objects in the derived category by default.
We will define $-\hatotimes_{\ZZ_{p}}-$ to be the \emph{derived} solid tensor product $-\otimes_{\ZZ_{p},\square}-$. We use the same convention for \(-\hatotimes_{\CC_{p}}-\) or \(-\hatotimes_{\QQ_{p}}-\). In the paper,
we usually work with Banach spaces or LB spaces. In this situation, thanks to \cite[Lemma 3.13]{JRC2021solid}, solid tensor products coincide with classical completed tensor products of Banach spaces, so there is no conflict of notations.

We will mainly work with sheaves on the analytic site. For \(f:X\to Y\), we write \(f^{-1}\) for the pull-back functor. We will reserve \(f^{*}\) for the pull-back functor of quasi-coherent sheaves. We will also see \(i^{\dagger}\) in Definition \ref{dfnidagger}, which is the pull-back functor of quasi-coherent sheaves to the dagger neighborhood, whose underlying functor of analytic sheaves coincides with \(i^{-1}\).

Without specifying otherwise, $(-)^{\la}$ (resp. $(-)^{R-\la}$) is always taking (resp. derived) locally analytic vectors with respect to $G(\QQ_{p})$-actions. The  definition of the latter is given in Definition \ref{dfnRla}. 

In what follows, all the isomorphisms and the identifications are unique up to a sign, and we ignore them systematically.

\subsection{Acknowledgment}
I would like to thank my advisor Vincent Pilloni for introducing me to the subject, for his constant encouragement and support, and for numerous fruitful discussions that make this work possible.
I also benefit greatly from conversations with  Lue Pan and Juan Esteban Rodr\'iguez Camargo, and I want to thank them for sharing their beautiful ideas. I would like to thank Longke Tang for suggesting the stacky approach that simplifies the proof of Proposition \ref{propf*equalsE0}.
I want to thank Vincent Pilloni, Lue Pan, Arthur-César Le Bras and Juan Esteban Rodr\'iguez Camargo for their comments and corrections on the earlier drafts of this work. 
I wish to express special thanks to the anonymous referee for the careful proofreading of this paper which improved greatly the presentation.
I also want to thank George Boxer, Valentin Hernandez, Andrew Graham, Arthur-César Le Bras, Zhouhang Mao, Junhui Qin, Liang Xiao, Qixiang Wang,  Zhixiang Wu and Daming Zhou for helpful exchanges.
Part of this work was done during my stay at BICMR, and I would like to thank Liang Xiao and Jun Yu for their hospitality. This work was written when the author was a PhD student at the Université Paris-Saclay, supported by CDSN (contrat doctoral spécifique normalien).

\section{Prerequisites}
\label{sectionGeomSen}

This section is about the set-up and prerequisites around the geometric Sen theory. 
The structure of the section 
is as follows. Subsection \ref{subsectionNotations} will set up the notation, introduce the infinite level modular curve, and Hodge-Tate period maps as in \cite{Scholze15}. Subsection \ref{subsecLocAnalyrticVect} will introduce the derived functor of taking locally analytic vectors, following \cite{Pan22}, \cite{JRC2021solid} and \cite{JacintoJoaquínJuan2023SolidLARepII}. 
Subsection \ref{subsectionEquvShvFl} introduces equivariant quasicoherent sheaves on \(\Fl\), and Subsection \ref{subsectionAutShv} introduces certain automorphic sheaves, which are \(\mrm{Solid}_{\QQ_{p}}\)-valued sheaves on the analytic site \(\Fl_{\an}\). In Subsection \ref{subsectionGeoSen}, we sum up the main result of ``geometric Sen theory'', which relates the equivariant quasicoherent sheave to the
automorphic sheaves.
Subsection \ref{subsecArithSenFontaine} defines a general formalism of defining the arithmetic Sen operator, the functor \(E_{0}(-)\), and the Fontaine operator. 
\subsection{Setup}
\label{subsectionNotations}
We fix a neat open compact subgroup $K^{p}=\prod_{l\nmid p}K_{l}\subset G(\mA_{f}^{p})$. Let $S$ be a finite set of places of $\QQ$ including $p$ and $\infty$, such that for any $l\notin S$, $K_{l}=\GL_{2}(\ZZ_{l})$, and write $K^{S}:=\prod_{l\notin S}\GL_{2}(\ZZ_{l})$. We define the Hecke algebra         \( \TT(K^{p}):=C_{c}^{\infty}(K^{p}\backslash G(\mA_{f}^{p})/K^{p},\ZZ), \)
and $\TT^{S}:=C_{c}^{\infty}(K^{S}\backslash G(\mA^{S})/K^{S},\ZZ)$. Then $\TT^{S}$ is a commutative ring and is generated by $T_{l}$ and $S_{l}$ for $l\notin S$.
Concretely, \(\TT^{S}\) is isomorphic to the commutative algebra over \(\ZZ\) freely generated by \(S_{l}^{\pm 1}\) and \(T_{l}\) for \(l\notin S\). We endow both \(\TT^{S}\) and \(\TT(K^{p})\) with the discrete topology.

For any open compact subgroup $K_{p}\subset G(\QQ_{p})$, let $X_{\Kpp}$ be the compactified modular curve over $\Spec\QQ$, and $\mX_{\Kpp}$ be the analytification of $X_{\Kpp}\times_{\QQ}\CC_{p}$. We endow it with the standard log structure at cusps as in \cite[Example 2.1.2]{DLLZ2022logarithmicJAMS}. We will write $\Omega^{1}_{\mX_{\Kpp},\mrm{log}}:=\Omega^{1}_{\mX_{\Kpp}}(C)$, where $C$ denotes the cusps. We see by definition that $\mX_{K^{p}K_{p}'}\to \mX_{\Kpp}$ is Kummer \'etale for $K_{p}$ sufficiently small.

In  \cite{Scholze15}, Scholze proves the following:
\begin{thm}[{\cite[Theorem 3.1.2]{Scholze15}}]\label{thmScholzeHTPeriod}
There exists a perfectoid space $\mX_{K^{p}}$ such that                 \(
                \mX_{K^{p}}\sim \varprojlim_{K_{p}}\mX_{\Kpp} .
                \) Moreover, there exists a Hodge-Tate period map                 
                \[
                                \pi_{\HT}:\mX_{K^{p}}\to \Fl:=B\backslash G\cong \mP^{1} ,
                                \] which is affinoid in the sense that 
there exists a basis $\mcal{B}$ of open affinoid subsets of $\Fl$, such that for any $U\in\mcal{B}$, $\pi_{\HT}^{-1}(U)$ is affinoid.
\end{thm}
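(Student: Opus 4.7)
The plan is to recover Scholze's theorem by combining the moduli interpretation of the modular curves at finite level with a local perfectoid analysis in the ordinary/supersingular stratification, and then define $\pi_{\HT}$ from the Hodge--Tate filtration on the Tate module of the universal elliptic curve.

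First, I would set up the existence of the tilde-limit. The modular curves $\mX_{K^pK_p}$ admit a moduli description parametrizing (generalized) elliptic curves with $K^p$-level structure and $K_p$-level structure at $p$. Taking $K_p$ to run over the principal congruence subgroups $\Gamma(p^n)$, a point at the infinite level should classify an elliptic curve $E$ with a trivialization $\alpha\colon \ZZ_p^{\oplus 2}\xrightarrow{\sim} T_pE$. The natural candidate $\mX_{K^p}$ is built by gluing over the tower; the tilde-limit property $\mX_{K^p}\sim\varprojlim_{K_p}\mX_{K^pK_p}$ (in the sense of Scholze--Weinstein) then reduces to comparing the underlying topological space and the ring of global functions with the inverse limit construction, which is essentially formal once perfectoidness is known.

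Second, I would prove that $\mX_{K^p}$ is perfectoid by a local analysis. On the anticanonical tower above the ordinary locus, the universal $p$-divisible group sits in the connected--\'etale sequence $0\to \mu_{p^\infty}\to E[p^\infty]\to \QQ_p/\ZZ_p\to 0$ after a suitable trivialization, and the passage to infinite level amounts to adjoining all $p$-power roots of the Hasse invariant and of a coordinate trivializing $\mu_{p^\infty}$; this is manifestly perfectoid. On (a strict neighborhood of) the supersingular locus, one compares with the Lubin--Tate tower at infinite level, which is known to be perfectoid by work of Weinstein. The cusps are handled by replacing them with a small neighborhood and extracting $p$-power roots of a local parameter. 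Gluing these local perfectoid charts compatibly with the Hecke correspondences produces $\mX_{K^p}$; checking compatibility along the overlaps between the ordinary and supersingular strata is where the argument becomes genuinely delicate, and this is the step I expect to be the main obstacle.

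Third, once $\mX_{K^p}$ exists as a perfectoid space, I would define $\pi_{\HT}$ using the Hodge--Tate exact sequence
\[
0\to \Lie(E)(1)\to T_pE\otimes_{\ZZ_p}\hat{\mO}_{\mX_{K^p}}\to \omega_{E^\vee}\to 0 .
\]
The trivialization $\alpha$ converts this into a rank-one quotient of the trivial rank-two bundle $\hat{\mO}_{\mX_{K^p}}^{\oplus 2}\twoheadrightarrow \omega_{E^\vee}$, i.e., a morphism to $\mP^1=\Fl=B\backslash G$. The equivariance for the $G(\QQ_p)$-action on the left and for right translation on $\Fl$ is immediate from functoriality, and Galois-equivariance follows from that of the Hodge--Tate filtration.

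Finally, for affinoidness, I would exhibit an explicit basis $\mcal{B}$ of $\Fl$ by considering translates of the two standard opens $U_0,U_\infty$ that cover $\mP^1$, intersected with rational subsets. On each such $U$ the preimage $\pi_{\HT}^{-1}(U)$ can be identified, via the local charts constructed above, with an explicit affinoid perfectoid subset: on the ordinary/anticanonical chart this is clear because $\pi_{\HT}$ trivializes the Hodge filtration, and on the supersingular chart one uses that the Gross--Hopkins period map for the Lubin--Tate tower has affinoid preimages of small opens. Refining $\mcal{B}$ under the $G(\QQ_p)$-action and patching gives the desired basis of affinoid opens with affinoid preimage.
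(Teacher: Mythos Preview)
The paper does not prove this statement: it is stated as a citation of \cite[Theorem~3.1.2]{Scholze15} and used as a black box, with no argument given. So there is no ``paper's own proof'' to compare against.

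Your sketch is a reasonable high-level outline of how Scholze's original argument proceeds: perfectoidness via the anticanonical tower on neighborhoods of the ordinary locus, patching across the supersingular region, defining $\pi_{\HT}$ from the Hodge--Tate filtration, and extracting affinoid charts. A few points of caution if you intend to flesh this out: Scholze does not separately invoke Weinstein's perfectoidness of the Lubin--Tate tower and then glue; rather, he works uniformly with strict neighborhoods of the anticanonical locus and uses $G(\QQ_p)$-translates to cover the whole space, so the ``gluing ordinary with supersingular'' step you flag as delicate is organized differently in practice. Also, the affinoidness of $\pi_{\HT}$ in Scholze's proof comes not from a separate analysis on each stratum but from the explicit description of the preimage of a standard affinoid in $\Fl$ in terms of the anticanonical tower and its translates. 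For the purposes of the present paper none of this matters, since the result is only being quoted.
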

From the above argument, we know that for $K_{p}$ small enough, $\mX_{K^{p}}$ is an object in the pro-Kummer-\'etale site of $\mX_{\Kpp}$. So it makes sense to evaluate $\mO\mbb{B}_{\dR,\log,\mX_{\Kpp}}^{+}$ on $\mX_{K^{p}}$.
\begin{notation}
We denote $\hat{\mO}:=\pi_{\HT,*}\mO_{\mX_{K^{p}}}$.
\end{notation}
\begin{notation}
We will write $\mD(\Fl_{\an})$ for the derived category of sheaves 
on $\Fl_{\an}$ with values in solid $\QQ_{p}$-vector spaces, where $\Fl_{\an}$ denotes $\Fl$ endowed with its analytic topology.
\end{notation}
\begin{lem}\label{lemPushForwardVanish}
We have $R\pi_{\HT,*}\mO_{\mX_{K^{p}}}\cong \pi_{\HT,*}\mO_{K^{p}}$ in $\mD(\Fl_{\an})$. 
\end{lem}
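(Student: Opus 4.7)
The plan is to exploit the affinoid property of $\pi_{\HT}$ stated in Scholze's theorem just recalled. By hypothesis, there is a basis $\mcal{B}$ of open affinoids $U \subset \Fl_{\an}$ such that the preimage $\pi_{\HT}^{-1}(U) \subset \mX_{K^p}$ is affinoid perfectoid. Since the higher direct image $R^i\pi_{\HT,*}\mO_{\mX_{K^p}}$ is the sheafification of the presheaf $U \mapsto H^i(\pi_{\HT}^{-1}(U), \mO_{\mX_{K^p}})$ and $\mcal{B}$ generates the analytic topology on $\Fl$, it suffices to show that for every $U \in \mcal{B}$ and every $i>0$ one has $H^i(\pi_{\HT}^{-1}(U), \mO_{\mX_{K^p}}) = 0$.

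For this I would appeal to Scholze's acyclicity theorem for affinoid perfectoid spaces: on an affinoid perfectoid $Y = \Spa(R, R^+)$, the structure sheaf is acyclic on the analytic site (this is the analogue of Tate's acyclicity and appears already in \emph{Perfectoid Spaces}). Applied to $Y = \pi_{\HT}^{-1}(U)$ this gives the desired vanishing. The verification is then essentially a Leray/base-of-the-topology argument and presents no real difficulty.

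The one point that deserves a word of comment, rather than a genuine obstacle, is that the statement is phrased in $\mD(\Fl_{\an})$ with values in solid $\QQ_p$-vector spaces, whereas Scholze's vanishing is classically stated for sheaves of Banach algebras. However, on affinoid perfectoid $Y$ the sections of $\mO_Y$ over affinoid subsets are Banach $\QQ_p$-algebras, and the conventions recalled in the Notations and Conventions (in particular via \cite[Lemma 3.13]{JRC2021solid}) guarantee that the solid derived functors agree with the classical ones in the Banach range. Hence the vanishing carries over, and pushing down gives $R\pi_{\HT,*}\mO_{\mX_{K^p}} \cong \pi_{\HT,*}\mO_{\mX_{K^p}}$ in $\mD(\Fl_{\an})$, as claimed.
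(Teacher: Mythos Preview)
Your proof is correct and takes essentially the same approach as the paper: the paper simply says ``This follows from almost purity (\cite[Proposition 6.14]{Scholze12}) since $\pi_{\HT}$ is affinoid,'' which is exactly your argument of combining the affinoid property of $\pi_{\HT}$ with Scholze's acyclicity of the structure sheaf on affinoid perfectoids. Your additional remarks on the base-of-topology argument and the compatibility of solid and classical cohomology just make explicit what the paper leaves implicit.
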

\begin{proof}
We need to prove that \(R^{i}\pi_{\HT,*}(\mO_{\mX_{K^{p}}})=0\) for \(i>0\) in the category of solid \(\QQ_{p}\)-vector spaces. For this, it suffices to show that for any profinite set \(S\), and for \(V\in\mcal{B}\),
\(H^{i}(\pi_{\HT}^{-1}(V),\mO_{\mX_{K^{p}}}(S))=0\) for \(i>0\), where \(\mO_{\mX_{K^{p}}}(S)\) denotes the sheaf of abelian groups over \((\mX_{K^{p}})_{\an}\), sending \(U\) to \(C^{0}(S,\mO_{\mX_{K^{p}}}(U))\cong C^{0}(S,\CC_{p})\hatotimes_{\CC_{p}}\mO_{\mX_{K^{p}}}(U)\). Note that \(C^{0}(S,\CC_{p})\) is perfectoid, and if we put \(\underline{S}:=\Spa(C^{0}(S,\CC_{p}))\),  since $\pi_{\HT}^{-1}(V)$ is affinoid perfectoid, we know that \(\pi_{\HT}^{-1}(V)\times \underline{S}\) is also affinoid perfectoid by \cite[Proposition 6.18]{Scholze12}.
Then for \(i>0\), 
\[H^{i}(\pi_{\HT}^{-1}(V),\mO_{X^{K^{p}}}(S))\cong H^{i}(\pi_{\HT}^{-1}(V)\times_{\CC_{p}}\underline{S},\mO_{\pi_{\HT}^{-1}(V)\times_{\CC_{p}}\underline{S}})\cong 0,
\] by almost purity (\cite[Proposition 6.14]{Scholze12}). 
\end{proof}
The cohomology of the sheaf \(\hat{\mO}\) computes the completed cohomology of \cite{Emerton06}.
\begin{dfn}[Completed cohomology]\label{dfnCompletedCohomology}
We define \[R\Gamma(K^{p},\ZZ/p^{n}):=\varinjlim_{K_{p}}R\Gamma_{\et}(X_{\Kpp,\bar{\QQ}},\ZZ/p^{n}),\] where \(R\Gamma_{\et}(X_{\Kpp,\bar{\QQ}},\ZZ/p^{n})\) is equipped with the trivial condensed structure.

For any solid \(\ZZ_{p}\)-algebra \(R\), we define \emph{the completed cohomology} as \[R\Gamma(K^{p},R):=
(R\varprojlim_{n}R\Gamma(K^{p},\ZZ/p^{n}))\hatotimes_{\ZZ_{p}}R,
\] and \(\ti{H}^{i}(K^{p},R):=H^{i}(R\Gamma(K^{p},R))\).
\end{dfn}
\begin{rmk}\label{rmkCompletedCohomology}
In the original definition of \cite{Emerton06}, one works with the open modular curves, but in the curve case, the open and the proper curves give the same completed cohomology. See for example \cite[\S 4.4.1]{Pan22}.
\end{rmk}
\begin{thm}[Primitive Comparison]\label{thmPrimitiveComparison}
        We have isomorphisms of solid \(\CC_{p}\)-spaces \begin{align}\label{alignEqualHatO}
                R\Gamma(K^{p},\CC_{p})\cong R\Gamma(\mX_{K^{p}},{\mO}_{\mX_{K^{p}}})\cong R\Gamma(\Fl,R\pi_{\HT,*}\mO_{\mX_{K^{p}}})\cong R\Gamma(\Fl,\hat{\mO}).
\end{align}
\end{thm}
\begin{proof}
All the isomorphisms follow from the previous lemma except the first one.
This is essentially in the proof of \cite[Theorem 4.2.1]{Scholze15}. See also \cite[Corollary 4.4.3]{Pan22}. Note that by primitive comparison in \cite[Theorem 1.3]{Scholze13}, 
\[R\Gamma(\mX_{\Kpp},\mO^{+}_{\mX_{\Kpp}}/p^{n})\cong R\Gamma(\Kpp,\ZZ/p^{n}),\] where both sides are discrete \(\ZZ/p^{n}\)-modules,
and then one can take \(\varinjlim_{K_{p}}\), and then \(-\hatotimes_{\ZZ_{p}}\CC_{p}\) to obtain the desired isomorphism of solid \(\QQ_{p}\)-spaces.
\end{proof}

\subsection{Locally analytic vectors}\label{subsecLocAnalyrticVect}
As in \cite{Pan22} and \cite{Pan2209.06II}, we will realize the Sen operator and the Fontaine operator geometrically via the locally analytic vectors in the completed cohomology. 

In this subsection, we recall some prerequisites around locally analytic vectors.
Since we will be working in the derived category,
we will define the functor taking derived locally analytic vectors. 
This is introduced in \cite{Pan22} and rewritten using condensed math in \cite{JRC2021solid}.

For $G$ an analytic group over $\Spa(\QQ_{p},\ZZ_{p})$, let 
$\mO_{G}:=H^{0}(G,\mO)$ denote its ring of analytic functions.
Then we have two actions of $G(\ZZ_{p}):=G(\Spa(\QQ_{p},\ZZ_{p}))$ on $\mO_{G}$:                 \[
                g*_{1}f(g'):=f(g^{-1}g'),\;g*_{2}f(g'):=f(g'g),\;g,g'\in G(\ZZ_{p}) .
                \] Clearly $*_{1}$ and $*_{2}$ commute with each other. 

\begin{dfn}[{\cite[Definition 4.20]{JRC2021solid}, \cite[\S 6.2]{JacintoJoaquínJuan2023SolidLARepII}}]\label{dfnSolidRep}
Let $K$ be a locally compact \(p\)-adic Lie group.
A \emph{solid $K$-representation} $V$ over $\QQ_{p}$ is defined to be a solid \(\QQ_{p}\)-space equipped with an action of $\QQ_{p,\square}[K]$,
 where $\QQ_{p,\square}[K]:=\ZZ_{p,\square}[K]\otimes_{\ZZ_{p}}\QQ_{p}$, and $\ZZ_{p,\square}[K]$ is the Iwasawa algebra. We denote the category of solid \(K\)-representations by \(\mrm{Rep}_{\QQ_{p,\square}}(K)\). 
\end{dfn}
\begin{rmk}
The typical examples are continuous Banach (or LB) $K$-representations. The main benefit with condensed formalism is that we can work comfortably with the derived category. To simplify the notation, we will write as $-\hatotimes_{\QQ_{p}}-$
the derived solid tensor product $-\otimes^{L}_{\QQ_{p,\square}}-$. In the case of Banach $K$-representations, this 
derived tensor product is concentrated in degree $0$ and coincides with the classical completed tensor product (Lemma 3.13 of \cite{JRC2021solid}), so there is no clash between notations.
\end{rmk}
\begin{dfn}[\cite{JRC2021solid}]\label{dfnRla}
Let $G$ be an analytic group (i.e. a group object in the category of rigid varieties) over $\Spa(\QQ_{p},\ZZ_{p})$.
Let \(G'\subset G\) be an open affinoid subgroup. 
Let $\mO_{G'}:=H^{0}(G',\mO_{G'})$. Note that \(G(\QQ_{p}):=G(\Spa(\QQ_{p},\ZZ_{p}))\) is a locally compact Lie group, and \(G'(\QQ_{p})\) is an open compact subgroup. 

If $V$ is a solid $G'(\QQ_{p})$-representation, we define the functor of taking the \emph{derived $G'(\QQ_{p})$-analytic vectors} as               \[
                V^{R-G'(\QQ_{p})-\an}:=R\Gamma((G'(\QQ_{p}),*_{1,3}),V\hatotimes_{\QQ_{p}}\mO_{G'}) ,
                \] where $*_{1,3}$ denotes the diagonal action of $G'(\QQ_{p})$ on $V\hatotimes\mO_{G'}$, in which the action on $\mO_{G'}$ is induced by left multiplication. Note that the action $(G'(\QQ_{p}),*_{2})$ (resp. $(\Lie(G),*_{2})$) on $\mO_{G'}$ induces an additional action of $G'(\QQ_{p})$ (resp. $\Lie(G)$) on $V^{R-G'(\QQ_{p})-\an}$. 

Let \(V\) be a solid \(G(\QQ_{p})\)-representation. We define the \emph{derived $G(\QQ_{p})$-locally analytic vectors} as                 \[
                V^{R-G(\QQ_{p})-\la}:=\varinjlim_{G_{n}\subset G}V^{R-G_{n}(\ZZ_{p})-\an} ,
                \] and the \emph{derived \(G(\QQ_{p})\)-smooth vectors} as \[V^{R-G(\QQ_{p})-\sm}:=\varinjlim_{G_{n}\subset G}R\Gamma(G_{n}(\QQ_{p}),V),\]
                with the colimits going through all the open affinoid subgroups $G_{n}$ of $G$. 
                We will write $V^{R-\la}$ when it causes no confusion. Note that we still have an action of $\Lie(G)$ on $V^{R-\la}$. 

We denote by $\mO_{G,1}$ the stalk of \(\mO_{G}\) at $1\in G(\QQ_{p})$. Then  by definition, \[V^{R-\la}\cong (V\hatotimes\mO_{G,1})^{R-\sm}:= \varinjlim_{G_{n}\subset G}R\Gamma(G_{n}(\QQ_{p}),V\hatotimes H^{0}(G_{n},\mO)).
\]

We say that \(V\) is \emph{locally analytic} if the natural morphism \(V^{R-\la}\to V\) is an isomorphism. We denote by \(\mrm{Rep}_{\QQ_{p,\square}}^{\la}(G'(\QQ_{p}))\) the category of \(G'(\QQ_{p})\)-locally analytic representations, which is a full subcategory of \(\mrm{Rep}_{\QQ_{p,\square}}(G'(\QQ_{p}))\).
\end{dfn}
\begin{lem}\label{lemRlaCommutatesWithTensor}
Let \(G\) be a compact \(p\)-adic Lie group.
Let \(V\) be a solid representation of \(G\) over \(\QQ_{p,\square}\), and let \(M\in D(\QQ_{p,\square})\), which we regard  as a solid representation of \(G\) by putting the trivial action. 
Then \[(V\hatotimes_{\QQ_{p}}M)^{R-\la}\cong V^{R-\la}\hatotimes_{\QQ_{p}}M.\]
\end{lem}
\begin{proof}
By shrinking \(G\), we assume that \(G\) is uniform. 
Note that \[(V\hatotimes_{\QQ_{p}}M)^{{R-G-\an}}\cong R\Gamma(G,V\hatotimes_{\QQ_{p}}\mO_{G}\hatotimes_{\QQ_{p}}M),
\] where \(\mO_{G}\) denotes the ring of analytic functions on \(G\),  and \[V^{R-G-\an}\hatotimes m\cong R\Gamma(G,V\hatotimes_{\QQ_{p}}\mO_{G})\hatotimes M.
\] So by the definition of \((-)^{R-\la}\), it suffices to show that for any uniform pro-\(p\) group \(G\), and for any solid representation \(V\) of \(G\), \[R\Gamma(G,V\hatotimes M)\cong R\Gamma(G,V)\hatotimes M.
\] 
By definition, \(R\Gamma(G,-):=\RHom_{G}(\QQ_{p},-)\), where \(\QQ_{p}\) denotes the trivial representation.
Now by Lazard-Serre theorem (\cite[Theorem 5.7]{JRC2021solid}), 
\(\QQ_{p}\) admits a projective resolution \[0\to \QQ_{p,\square}[G]^{\oplus \binom{d}{d} }\to \cdots\to \QQ_{p,\square}[G]^{\oplus \binom{d}{0} }\to \QQ_{p}\to 0.
\] Therefore, 
\[R\Gamma(G,V)\cong \left[
        V^{\oplus\binom{d}{0}}\to \cdots \to V^{\oplus\binom{d}{d}}
\right],\]
where the transition maps are induced by elements in \(\QQ_{p,\square}(G)(*)\), independent of \(V\). 

Then \[R\Gamma(G,V)\hatotimes M\cong \left[
        V^{\oplus\binom{d}{0}}\hatotimes M\to \cdots \to V^{\oplus\binom{d}{d}}\hatotimes M
\right],\]
while \(R\Gamma(G,V\hatotimes M)\) is also represented by the same complex.
\end{proof}
\begin{cor}\label{corRLAcommuteTensorLA}
Let \(G\) be a compact Lie group.
Let \(V,M\) be solid representations of \(G\) over \(\QQ_{p,\square}\), and \(M\) is derived locally analytic.
Then \[(V\hatotimes_{\QQ_{p}}M)^{R-\la}\cong V^{R-\la}\hatotimes_{\QQ_{p}}M.\]
\end{cor}
\begin{proof}
Since \(M\) is derived locally analytic, we know \(M\cong \varinjlim_{G'\subset G} M^{R-G'-\la}\). So by replacing \(M\) by \(M^{R-G'-\an}\) and \(G\) by \(G'\), we can assume that \(M\) is derived \(G\)-analytic. Then we have an isomorphism of \(G\)-representations
\[
(M\hatotimes \mO_{G},*_{M}\times *_{1})\cong (M\hatotimes \mO_{G},*_{1})
\] where \(*_{1}\) denotes the action on \(\mO_{G,1}\) by left multiplication,
and \(*_{M}\) denotes the action on \(M\). 
Then 
\begin{align*}
R\Gamma(G,(V\hatotimes M\hatotimes \mO_{G},*_{V}\times *_{M}\times *_{1}))\cong R\Gamma(G,(V\hatotimes M\hatotimes \mO_{G},*_{V}\times *_{1}))\\\cong R\Gamma(G,(V\hatotimes \mO_{G},*_{V}\times *_{1}))\hatotimes M
\end{align*}
by the proof of Lemma \ref{lemRlaCommutatesWithTensor}. Replacing \(G\) by \(G'\subset G\), and taking colimits along \(G'\subset G\), we obtain the desired isomorphism.
\end{proof}

In the case of modular curves, we will consider the following sheaves.
\begin{notation}
We
denote $\mO^{\la}:=H^{0}((\hat{\mO})^{R-\la})\in D(\Fl_{\an})$, and denote by
$\mO^{\sm}\subset \mO^{\la}$ the subsheaf of \(\hat{\mO}\) consisting of the $G(\QQ_{p})$-smooth vectors.         
\end{notation}
\begin{prop}[{\cite[Proposition 4.3.15]{Pan22}}]\label{propORlaConcentrat}
The sheaf $\hat{\mO}^{R-\la}\in D(\Fl_{\an})$ is concentrated in degree \(0\). In other words, \(\mO^{\la}\cong (\hat{\mO})^{R-\la}\). 
\end{prop} 
\begin{cor}\label{corPrimitiveLA}
We have isomorphisms of \(\Gal_{\QQ_{p}}\times \TT(K^{p})\times G(\QQ_{p})\)-modules
 \[R\Gamma(K^{p},\QQ_{p})^{R-\la}\hatotimes \CC_{p}\cong 
R\Gamma(K^{p},\CC_{p})^{R-\la}\cong R\Gamma(\Fl,\mO^{\la}).   \]

\begin{proof}
The first isomorphism follows from Lemma \ref{lemRlaCommutatesWithTensor}.
By Theorem \ref{thmPrimitiveComparison},
\[R\Gamma(K^{p},\CC_{p})\cong R\Gamma(\Fl,\hat{\mO}).\] Therefore, \[R\Gamma(K^{p},\CC_{p})^{R-\la}\cong R\Gamma(\Fl,\hat{\mO})^{R-\la}\cong R\Gamma(\Fl,\hat{\mO}^{R-\la}).\]
By Proposition \ref{propORlaConcentrat}, the latter is isomorphic to \(R\Gamma(\Fl,\mO^{\la})\).
\end{proof}
\end{cor}
%
\subsection{Equivariant sheaves on \(\Fl\)}\label{subsectionEquvShvFl}
\begin{notation}[\cite{BB83}]\label{notationG0B0N0}
Let \(\fg:=\Lie(\GL_{2})\), \(\fg^{0}:=\fg\otimes \mO_{\Fl}\), and let \(\fb^{0}\) (resp. \(\fn^{0}\)) be the sub-vector bundle of \(\fg^{0}\) whose total space has the description \[\fb^{0}=\{(X\in \fg,x\in \Fl):X\in\fb_{x}\},\; (\text{resp. } \fn^{0}=\{(X\in \fg,x\in \Fl):X\in\fn_{x}\}).
\] 
\end{notation}
\begin{dfn}\label{dfnEquivShvOnFl}
        Let $U$ be any open affinoid subspace of $\Fl$.
        For any open affinoid subgroup $G'\subset \GL_{2}(\QQ_{p})$ such that $G'\times U\to \Fl$ factors through \(U\), we denote by $\QCoh_{G'}(U)$ the derived category of the $G'$-equivariant quasi-coherent sheaves on $U$ (in the sense of \cite{Andreychev21pseudocoherent}). More precisely, \[\QCoh_{G'}(U):= \varinjlim_{[n]\in\Delta}\QCoh(U\times G^{\prime,n}).
        \] Roughly, \(\mF\in \QCoh_{G'}(U)\)
        is equivalent to the data of \(\mF\in \QCoh(U)\) equipped with \(\mF\to \mF\hatotimes H^{0}(G',\mO_{G'})\) plus further compatibilities. In particular, we have an action of \(\fg:=\Lie(\GL_{2})\) on \(R\Gamma(U,\mF)\).

        Let $\QCoh_{\fg}(U)$ denote the $2$-colimit of $\QCoh_{G'}(U)$ for all small enough $G'\subset \GL_{2}(\QQ_{p})$ along restriction maps. 
\end{dfn}

\begin{dfn}[ON Banach sheaf]
        \label{dfnONBanach}
Let \(X\) be a rigid variety over \(\Spa(\QQ_{p},\ZZ_{p})\), and \(\mF\in\QCoh(X)\), then we say \(\mF\) is an \emph{ON Banach sheaf} if there exists an analytic cover $\{U_{i}\} $ of \(X\), 
such that \(\mF|_{U_{i}}\cong \widehat{\bigoplus}_{j\in J_{i}}\mO_{U_{i}}\). 
\end{dfn}
\begin{rmk}\label{rmkWhyONBanach}
Given \(\mF\in\QCoh(X)\), we say \(\mF\) is \emph{static} if \(R\Gamma(U,\mF)\) is concentrated in degree \(0\) for any open affinoid subspace \(U\subset X\).
Note that the category of quasi-coherent sheaves in \cite{Andreychev21pseudocoherent} has no t-structure, since the pull-back functor along an open immersion 
is not exact. However, since \(\widehat{\bigoplus}_{j\in J_{i}}\mO_{U_{i}}\) is flat over \(\mO_{U_{i}}\), ON Banach sheaves gives examples of static sheaves. 
\end{rmk}
\begin{dfn}[{\cite[Definition 2.3.5]{Juan2022.05GeoSen}}]
        \label{dfnRelLA}
        Let \(G'\) and \(U\) be as in Definition \ref{dfnEquivShvOnFl}. 
        We say that  $\mF\in\QCoh_{G'}(U)$ is \emph{relative locally analytic} if it is ON Banach, and there is an analytic cover \(\{U_{i}\}_{i\in I}\)
        of \(X\), such that \(\mF|_{U_{i}}\) admits an ON lattice \(\mF^{+}\) over \(\mO^{+}_{U_{i}}\), with a basic \(\{v_{i}\}\) such that there is an open subgroup \(G''\) of \(G'\) and \(\epsilon>0\),  \(G''\) stabilize \(\mF^{+}\), and fix \(v_{i}\!\!\!\mod p^{\epsilon}\in \mF^{+}/p^{\epsilon}\). This definition extends to all \(\mF\in\QCoh_{\fg}(U)\).
        
\end{dfn}

For later application, we will need the following category.
\begin{dfn}
        We define $\QCoh_{\fg}(U)^{\fn^{0}}$ to be the category of pairs $(\mF,i)$ with $\mF\in\QCoh_{\fg}(U)$ and a homotopy equivalence $i$ between the morphisms $\mF\to\mF\otimes(\fn^{0})^{\vee}$ and $0$. Note that $\mF\to \mF\otimes(\fn^{0})^{\vee}$ is indeed defined in $\QCoh_{\fg}(U)$ because the action of $\fn^{0}$ commutes with that of $\fg$ and of $\mO_{\Fl}$.
\end{dfn}
\begin{notation}\label{egONBanachN0}
 If \(\mF\) is relative locally analytic, and $\fn^{0}$ acts on $\mF$ by zero, then $\mF$ gives rise to an object in $\QCoh_{\fg}(U)^{\fn^{0}}$. We denote as $\QCoh^{\rla}_{\fg}(U)^{\fn^{0}}$ the subcategory of $\QCoh_{\fg}(U)^{\fn^{0}}$ generated (under filtered colimit, extensions and taking idempotents) by relative locally analytic modules that are killed by $\fn^{0}$.

Note that relative locally analytic modules are static by definition, and in particular $\QCoh_{\fg}^{\rla}(U)^{\fn^{0}}$  is not stable as a subcategory of $\QCoh_{\fg}(U)^{\fn^{0}}$.
\end{notation}

\begin{eg}
For any $(a,b)\in\ZZ^{\oplus 2}$, $\chi^{(a,b)}:B\to \GG_{m}$ is an algebraic representation, which by Beilinson-Berstein localization gives rise to a line bundle on $\Fl$, which we denote as $\omega_{\Fl}^{(b,a)}$ following the normalization of \cite{Pilloni22}. Then $\omega^{(b,a)}_{\Fl}\in \QCoh^{\rla}_{\fg}(U)^{\fn^{0}}$. 
\end{eg}

\subsection{Automorphic sheaves on \(\Fl\)}\label{subsectionAutShv}
We now go on to construct certain ``automorphic sheaves" on $\Fl$ via the Hodge-Tate period map (see \cite[Section 3]{Pilloni22} for details). Let $\bar{B}$ be the opposite Borel subgroup of \(G\) consisting of lower-triangular matrices. We will refer to objects in the following category as automorphic sheaves:
\begin{dfn}\label{dfnAutomorphicShv}
Let \(U\) be an open subspace of \(\Fl\). 
Let $\Mod(\mO^{\sm}\otimes\TT(K^{p})|_{U})$ denote the category of $\mO^{\sm}$-modules in $D(U_{\an},\QQ_{p,\square})$ equipped with an action of \(\TT(K^{p})\), where we regard \(\TT(K^{p})\) as a constant sheaf over \(\Fl_{\an}\). 

Here we do not assume any compatibility between the actions of \(\mO^{\sm}\) and \(\TT(K^{p})\).
In other words, we are considering the category of \(\mO^{\sm}\otimes\TT(K^{p})\)-modules, where the tensor product is the push-out in the category of non-commutative rings.
\end{dfn}
\begin{eg}
We have \(\mO^{\sm},\mO^{\la}\in \Mod(\mO^{\sm}\otimes\TT(K^{p}))\).
\end{eg}
\begin{notation}
        For any algebraic representation 
$V$ of $\bar{B}$, we have an automorphic vector bundle $\mcal{V}_{\Kpp}$ over $\mcal{Y}_{\Kpp}$ by \cite{Milne1990canonical}, which extends over $\mcal{X}_{\Kpp}$,  and we define the associated sheaf $\mcal{V}^{\sm}$ over $\Fl$ as  \( \mcal{V}^{\sm}:=\pi_{\HT,*}(\varinjlim_{K_{p}}\pi_{K_{p}}^{-1}\mcal{V}_{\Kpp}), \) 
where $\pi_{K_{p}}:\mX_{K^{p}}\to \mX_{\Kpp}$.

Then  \(\mcal{V}^{\sm}\in \Mod(\mO^{\sm}\otimes\TT(K^{p}))\), and is further equipped with an action of $G(\QQ_{p})\times \Gal_{\QQ_{p}}$, where the action of $G(\QQ_{p})$ is smooth. We also write $\mcal{V}^{\la}:=\mO^{\la}\otimes_{\mO^{\sm}}\mcal{V}^{\sm}\in\Mod(\mO^{\sm}\otimes\TT(K^{p}))$.
\end{notation}
 It is worth pointing out that $\pi_{\HT,*}$ also produces no higher cohomology here thanks to Tate's acyclicity on $\mX_{\Kpp}$ (see \cite[Proposition 3.2(3)]{Pilloni22}).
\begin{notation}
For $(a,b)\in\ZZ^{\oplus 2}$,
we denote by $\omega^{(a,b),\sm}$ the associated sheaf by putting \(V\) to be the character $\chi^{(a,b)}:\bar{B}\to T\xrightarrow{(a,b)} \GG_{m}$. 
Equivalently, we 
consider the line bundle $\omega^{(a,b),\sm}_{\mX_{\Kpp}}$ on the finite level modular curve $\mX_{\Kpp}$, and $\omega^{(a,b),\sm}:=\pi_{\HT,*}(\varinjlim_{K_{p}}\pi_{K_{p}}^{-1}\omega^{(a,b)}_{\mX_{\Kpp}})$. 

 We note that in more classical term, $\omega^{(a,b)}_{\mX_{\Kpp}}\cong \omega^{\otimes (-b)}_{E}\otimes\omega^{\otimes a}_{E^t}$. 
 In particular, the classical modular forms of weight $1+k$ are given by         \( H^{0}(\Fl,\omega^{(1,-k),\sm}) \),  whose associated Galois representations are of Hodge-Tate weight $0,k$.
\end{notation}
\begin{notation}
We denote \(\Omega^{1}_{\mX_{\Kpp},\log}:=\Omega^{1}_{\mX_{\Kpp}}(D)\), where \(D\) denotes the cusps. 

By Kodaira-Spencer isomorphism, we know $\omega^{(1,-1)}_{\mX_{\Kpp}}\cong\Omega^{1}_{\mX_{\Kpp},\log}$.
We denote $\Omega^{1,\sm}_{\log}:=\omega^{(1,-1),\sm}=\pi_{\HT,*}(\pi_{K_{p}}^{-1}\Omega^{1}_{\mX_{\Kpp},\log})$, and $\Omega^{1,\la}_{\log}:=\Omega^{1,\sm}_{\log}\hatotimes_{\mO^{\sm}}\mO^{\la}$. 
\end{notation}

\begin{notation}
We denote as $D^{\sm}:=\mcal{V}_{\mrm{std}}^{\sm}$ the associated sheaf on $\Fl$ for the standard $2$-dimensional representation $V_{\mrm{std}}$ of $\bar{B}$; explicitly, for quasi-compact open subspace $U$ such that $\pi_{\HT}^{-1}(U)=\pi_{K_{p}}^{-1}(U_{K_{p}})\subset \mX_{K^{p}}$ for small enough open subgroup $K_{p}\subset G(\QQ_{p})$ and $U_{K_{p}}\subset \mX_{\Kpp}$, then $D^{\sm}(U):=\varinjlim_{K_{p}}H^{1}_{\dR}((E_{\Kpp}|_{U_{K_{p}}})/U_{K_{p}})$, where $E_{\Kpp}|_{U_{K_{p}}}$ denotes the universal generalized elliptic curve over $U_{K_{p}}\subset \mX_{\Kpp}$, and $H^{1}_{\dR}((E_{\Kpp}|_{U_{K_{p}}})/U_{K_{p}}):=H^{0}(U_{K_{p}},R^{1}\pi_{*}\Omega^\bullet_{(E_{\Kpp}|_{U_{K_{p}}})/U_{K_{p}},\log})$ denotes the relative log-de Rham cohomology.         
\end{notation}

\begin{notation}\label{notationBruhatStrat}

We fix the Bruhat stratification on $\Fl\cong\mP^{1}$ 
as $\Fl=U_{w}\cup \{\infty\}, $
where $\infty$ is the unique $B(\QQ_{p})$-fixed point on $\Fl$, and $U_{w}\cong \mA^{1}$ is its complement. Let
    \( j \) and     \( i \)
    denote respectively the embedding of $U_{w}\cong \mA^{1}$ and of $\infty$ into     \( \Fl. \) 

We will write $\mcal{D}_{\infty}:=\mcal{D}(\infty_{\an})$, the category of sheaves valued in the solid \({\QQ}_{p}\)-spaces on the analytic site of \(\infty\). Similar, we write $\mcal{D}_{U_{w}}:=\mcal{D}(U_{w,\an})$. Note that $\mcal{D}_{\infty}\cong D(\mrm{Solid}_{\QQ_{p,\square}})$.
    For any sheaf $\mcal{V}^{\sm}$ on $\Fl$, we denote $j_{!}\mcal{V}^{\sm}:=j_{!}j^{-1}\mcal{V}^{\sm}$ and $i_{*}\mcal{V}^{\sm}:=i_{*}i^{-1}\mcal{V}^{\sm}$, where $\mcal{V}^{\sm}$ is regarded as an analytic sheaf of $\QQ_{p}$-vector spaces on $\Fl_{\an}$. We moreover denote \[H^{i}_{c}(U_{w},\mcal{V}^{\sm}):=H^{i}(\Fl,j_{!}\mcal{V}^{\sm}),\;H^{i}(\infty,\mcal{V}^{\sm})\cong H^{i}(\Fl,i_{*}\mcal{V}^{\sm}).\]
    Note that the latter is the stalk of \(\mcal{V}^{\sm}\) at \(\infty\).
\end{notation}

    It is worth pointing out that in \cite[\S 2.3.3]{Pilloni22}, $\omega^{(a,b),\sm}_{\infty}\in \mcal{D}$ is defined 
for $(a,b)\in\CC_{p}^{\oplus 2}$, and forms a $p$-adic family. 
When $(a,b)\in\ZZ^{\oplus 2}$, $\omega^{(a,b),\sm}_{\infty}$ in \cite{Pilloni22} differs from our $i^{-1}\omega^{(a,b),\sm}$ by a twist of $B(\QQ_{p})$-action, since $B(\QQ_{p})$ acts on our $i^{-1}\omega^{(a,b),\sm}$ smoothly.

\begin{dfn}
        The space of the \emph{overconvergent modular
forms } of weight  $(a,b)$ and tame level $K^{p}$ is defined as $M_{(a,b)}^{\dagger}:=H^{0}(\infty,\omega^{(a,b),\sm})$. We will denote $M_{k+1}^{\dagger}:=M_{(1,-k)}^{\dagger}$.  Note that we have fixed the tame level $K^{p}$ at the beginning, and we omit $K^{p}$ from the expression. 
\end{dfn}
The relation with the classical overconvergent modular forms of Coleman is explained by \cite[Proposition 3.17]{Pilloni22}, that is,  \[(M_{k+1}^{\dagger})^{B(\ZZ_{p})}\cong\varinjlim_{n} M_{k+1}^{\dagger}(K^{p}\Gamma_{1}(p^{n})),\] where $M_{k+1}^{\dagger}(K^{p}\Gamma_{1}(p^{n}))$ denotes the classical overconvergent modular forms of level $K^{p}\Gamma_{1}(p^{n})$ of Coleman (\cite{Coleman1997classical}). Note that the twist $-\otimes\CC_{p}(\omega_{0}\chi)$ in \cite[Proposition 3.17]{Pilloni22} doesn't appear in our setting for the difference explained above.

\subsection{Geometric Sen theory}\label{subsectionGeoSen}
In this subsection, we collect results in geometric Sen theory in \cite{Pilloni22}, \cite{Juan2022.05GeoSen}, \cite{Juan2022.09locallyShi}, which give reformulations of \cite{Pan22}. The goal is to define a functor \(\VB\),
which connects the equivariant sheaves on \(\Fl\) to the automorphic sheaves on \(\Fl\). 

\begin{dfn}\label{dfnVBNaive}
        We define the functor $\VBn$ as     
                \begin{align*}
                \VBn:\mrm{QCoh}_{\fg}(U)&\to \Mod(\mO^{\sm}\otimes\TT(K^{p})|_{U}),\\
                \mF&\mapsto \left(R\pi_{\HT,*}\circ L\pi_{\HT}^{*}(\mF)\right)^{R-\sm}\cong (\hat{\mO}\hatotimes_{\mO_{\Fl}}\mF)^{R-\sm}.
                \end{align*}
        \end{dfn}
The functor \(\VBn\) is almost always derived, but we can modify the definition to obtain a better functor.        


\begin{dfn}\label{dfnVB}
We define a functor $\VB:\QCoh_{\fg}(U)^{\fn^{0}}\to \Mod(\mO^{\sm}\otimes\TT(K^{p})|_{U})$ by \[\VB((\mF,i)):=R\Gamma(\fg^{0}/\fn^{0},\mO^{\la}\otimes^{L}_{\mO_{\Fl},\square}\mF),\] where we have a canonical action of $\fg^{0}/\fn^{0}$ on $\mF$ thanks to the null homotopy $i$ of $\mF\to\mF\otimes(\fn^{0})^{\vee}$, which makes $\mF$ a direct summand of $R\Gamma(\fn^{0},\mF)$, and we have an action of \(\fg^{0}/\fn^{0}\) on \(\mO^{\la}\) by \cite[Theorem 4.2.7]{Pan22}.
\end{dfn}
\begin{lem}\label{lemVBnToVB} The functor
$\mF\mapsto \Fib(\mF\to\mF\otimes(\fn^{0})^{\vee})$ 
defines a natural functor $R\Gamma(\fn^{0},-):\QCoh_{\fg}(U)\to\QCoh_{\fg}(U)^{\fn^{0}}$. 
Then \(
        \VBn\cong \VB\circ R\Gamma(\fn^{0},-) .
        \)
\end{lem}
\begin{proof}
For $\mcal{G}=\Fib(\mF\to\mF\otimes(\fn^{0})^{\vee})$, there is a canonical homotopy equivalence between $\mcal{G}\to \mcal{G}\otimes(\fn^{0})^{\vee}$ and $0$, and thus we obtain the functor     \( R\Gamma(\fn^{0},-):\QCoh_{\fg}(U)\to\QCoh_{\fg}(U)^{\fn^{0}}. \)
Now we want to factorize $\VBn$. By Theorem 1.5 and Theorem 1.7 of \cite{JRC2021solid},      \[
        \VBn(\mF)\cong (\hat{\mO}\otimes^{L}_{\mO_{\Fl},\square}\mF)^{R-\sm}\cong R\Gamma(\fg,(\hat{\mO}\otimes^{L}_{\mO_{\Fl},\square}\mF)^{R-\la})\cong 
        R\Gamma(\fg,\mO^{\la}\otimes^{L}_{\mO_{\Fl},\square}\mF)  .
        \] Here Corollary \ref{corRLAcommuteTensorLA} is used.
         Now we can extend the action of $\fg$ to $\fg^{0}=\fg\otimes\mO_{\Fl}$, and 
\begin{align*}
R\Gamma(\fg,\mO^{\la}\otimes^{L}_{\mO_{\Fl},\square}\mF)\cong R\Gamma(\fg^{0},\mO^{\la}\otimes^{L}_{\mO_{\Fl},\square}\mF)
&\cong R\Gamma(\fn^{0},\fg^{0}/\fn^{0},R\Gamma(\mO^{\la}\otimes^{L}_{\mO_{\Fl},\square}\mF))\\&\cong R\Gamma(\fg^{0}/\fn^{0},\mO^{\la}\otimes^{L}_{\mO_{\Fl},\square}R\Gamma(\fn^{0},\mF))         ,
\end{align*} where in the last step we use the fact that $\fn^{0}$ acts on $\mO^{\la}$ by zero (\cite[Theorem 4.2.7]{Pan22}). Therefore, we are done by putting $\VB:=R\Gamma(\fg^{0}/\fn^{0},\mO^{\la}\otimes^{L}_{\mO_{\Fl},\square}-)$.
\end{proof}

The following is the main theorem of geometric Sen theory: 
\begin{thm}[{
        \cite[Theorem 5.2.5, Theorem 5.2.1]{Juan2022.09locallyShi}}]\label{thmMainThmGeomSenVB}
For $\mF\in\QCoh^{\rla}_{\fg}(U)^{\fn^{0}}$, $\VB(\mF)$ is concentrated in degree $0$. In particular, \(\VB\) sends short exact sequences in     \( \QCoh^{\rla}_{\fg}(U)^{\fn^{0}} \) to those in \(\Mod(\mO^{\sm}\otimes \TT(K^{p})|_{U})\).
\end{thm}
\begin{proof}
Since the property that $\VB(\mF)$ is concentrated in degree $0$ 
is closed under filtered colimit, idempotents and extensions, we are reduced to the case when $\mF$ is relatively locally analytic and the action of \(\fn^{0}\) is trivial on \(\mF\). We further fix an affinoid open subgroup group \(G''\subset \GL_{2}\) as in Definition \ref{dfnRelLA}. 

Then for any \(K_{p}\subset G''(\QQ_{p})\), \(\pi_{\HT}^{*}(\mF)\) equipped with the action of 
\(K_{p}\) defines a bundle over \(\mX_{K^{p}K_{p},\proet}\). Moreover, by Theorem \ref{thmScholzeHTPeriod}, analytically over \(\mX_{K^{p}K_{p},}\), \(\pi_{\HT}^{*}(\mF)\) admits a lattice \(\mF^{0}\) such that \(\mF^{0}/p^{\epsilon}\) is almost isomorphic to \(\bigoplus_{J}(\mO_{\mX_{\Kpp}}^{+}/p^{\epsilon})\)
for some \(\epsilon>0\) and some index set \(J\). In other words, 
\(\pi_{\HT}^{*}(\mF)\)
as a pro-Kummer-\'etale \(\hat{\mO}_{X}\)-module over \(\mX_{\Kpp}\) is relatively locally analytic ON Banach (\cite[Definition 3.2.1]{Juan2022.05GeoSen}). Therefore, we can apply \cite[Corollary 3.3.3]{Juan2022.05GeoSen} and \cite[Theorem 5.2.5]{Juan2022.09locallyShi} to \(\pi_{\HT}^{*}(\mF)\), which implies that \[
R^{i}\eta_{K_{p},*}(\pi_{\HT,*}\mF)\cong \eta_{K_{p},*}H^{i}(\fn^{0},\pi_{\HT}^{*}\mF)\cong \eta_{K_{p},*}\circ\pi_{\HT}^{*}(H^{i}(\fn^{0},\mF)),
\] where \(\eta_{K_{p},*}\) denotes the push-forward from \(\mX_{\Kpp,\proket}\)
to \(\mX_{\Kpp,\an}\). 
Note that \(R\Gamma(\fn^{0},\mF)\cong \mF\oplus \mF\otimes \fn^{0,\vee}[-1]\). Thus \(H^{i}(\mX_{\Kpp,\an},R\eta_{K_{p},*}(\pi_{\HT}^{*}(\mF)))\)
is zero unless \(i=0,1\). 

Let \(U'\subset U\) be an open affinoid subspace, such that \(\pi_{\HT}^{-1}(U')\) is affinoid, and descends for small enough \(K_{p}\) to an open affinoid \(U'_{K_{p}}\subset \mX_{\Kpp}\).
For \(K_{p}'\subset K_{p}\),
we denote by \(U_{K_{p}'}'\) the preimage of \(U'_{K_{p}}\)
along \(\mX_{K^{p}K_{p}'}\to \mX_{\Kpp}\). Then \[H^{i}(U'_{K_{p}',\an},R\eta_{K_{p}',*}(\pi_{\HT}^{*}\mF))\cong H^{i}(K_{p}',R\Gamma(\pi_{\HT}^{-1}(U')_{\proket},\pi_{\HT}^{*}(\mF))),
\] and thus \[R\Gamma(U',\VBn(\mF))\cong\varinjlim_{K_{p}'}R\Gamma(U'_{K_{p}',\an},R\nu_{*}(\pi_{\HT}^{*}(\mF))).\]
Therefore, \(\VBn(\mF)\) is concentrated in degree \(0,1\).


A priori, $\VB(\mF\otimes(\fn^{0})^{\vee})\in D^{\ge0}(\mO^{\sm}|_{U})$. We know that \[\VBn(\mF)\cong\VB(R\Gamma(\fn^{0},\mF))\cong \VB(\mF)\oplus\VB(\mF\otimes\fn^{0,\vee})[-1].\] We then know that \(\VB(\mF\otimes \fn^{0,\vee})\) is concentrated in degree \(0\).
 Replacing $\mF$ with $\mF\otimes\fn^{0}$, we know that $\VB(\mF)\in D^{[0,0]}(\mO^{\sm}|_{U})$.
\end{proof}

The functor $\VB$ will allow us to reduce the calculation of automorphic sheaves to those of equivariant sheaves.
Let us give some examples:
\begin{eg}\label{egVBomegaEquOmega}
$\VB(\omega^{(b,a)}_{\Fl})\cong\omega^{(b,a),\sm}(a)$ (\cite[Corollaire 3.14]{Pilloni22}), where \((a)\) denotes the Tate twist.
\end{eg}
Another important example will be given in Theorem \ref{thmVBClaIsOla}.

\subsection{Arithmetic Sen operator and Fontaine operator}\label{subsecArithSenFontaine}
In this subsection, we discuss the following important functor \(\Darith(-)\). We will try to make the construction as formal as possible, as we will need to apply the functor to general derived objects.
\begin{notation}
For any (derived solid) $\Gal_{\QQ_{p}}$-representation \(M\) over $\QQ_{p}$, we denote                 \[
                D_{\mrm{arith}}(M):=(R\Gamma(H,M))^{R-
                \Gamma-\la} ,
                \] with \(H:=\Gal_{\QQ_{p}(\zeta_{p^{\infty}})}\)
                and \(\Gamma:=\Gal(\QQ_{p}(\zeta_{p^{\infty}})/\QQ_{p})\cong \ZZ_{p}^{\times}\).
                This construction gives a lax symmetric monoidal functor \[\Darith:\mrm{Rep}_{\QQ_{p,\square}}(\Gal_{\QQ_{p}})\to\mrm{Rep}^{\la}_{\QQ_{p,\square}}(\Gamma),
\]

We fix a generator \(\Theta\in\Lie(\Gamma)\).
by results in \cite[\S 4.3]{JacintoJoaquínJuan2023SolidLARepII}, we can identify \(\Rep^{\la}_{\QQ_{p},\square}(\Gamma)\) with \(\mrm{coMod}_{C^{\la}(\Gamma,\QQ_{p})}(\mrm{Solid}_{\QQ_{p}})\), and we have a morphism of co-algebras \(i_{1}^{-1}:C^{\la}(\Gamma,\QQ_{p})\to \QQ_{p}\llbracket t\rrbracket\) by restricting to the formal neighborhood of \(1\in \Gamma\), where \(t\) is the coordinate such that \(t(\Theta)=1\). 

Therefore, composing the base change along \(i_{1}^{-1}\) with \(\Darith\), we obtain \[\Darith:\Rep_{\QQ_{p,\square}}(\Gal_{\QQ_{p}})\to \Rep^{\la}_{\QQ_{p,\square}}(\Gamma)\to \mrm{Mod}_{\QQ_{p}[\Theta]}(\mrm{Solid}_{\QQ_{p}}),
\] which is also lax symmetric monoidal, where \(\mrm{Mod}_{\QQ_{p}[\Theta]}(\mrm{Solid}_{\QQ_{p}})\) is endowed with the \emph{convolution symmetric monoidal structure}. 
\end{notation}

The following discussion concerns (discrete) vector spaces over \(\QQ\). The main result is Proposition \ref{propf*equalsE0}. One can then apply them to the setting of condensed or solid vector spaces by applying the construction point-wise. 
\begin{dfn}
We denote by $\GG_{a}\cong\Spec \QQ[X]$ the additive group scheme over $\QQ$. We  denote by $\widehat{\GG}_{a}$ its completion at the unit; more precisely, for any $\QQ$-algebra $R$,     \( \widehat{\GG}_{a}(R):=\mrm{Nil}(R).  \). 
\end{dfn}
\begin{rmk}
$\widehat{\GG}_{a}$ is an fppf sheaf by results of de Jong (See Remark 2.2.18 of \cite{BhattLurie2022prismatic}), and we can consider the fppf stacks $B\GG_{a}$ and  $B\widehat{\GG}_{a}$.
\end{rmk}
\begin{prop}[{\cite[Example 2.2.12 \& Proposition 2.4.4]{BhattLurie2022prismatic}}]
        \label{propRepEqToStack}
We have natural symmetric monoidal equivalences of categories:         \[
        \QCoh(B\widehat{\GG}_{a})
        \cong D(\QQ[\Theta]),\] and
        \[\QCoh(B\GG_{a})
         \cong 
        \mrm{coMod}_{\QQ[t]}(\Mod_{\QQ})
         \cong 
         D_{\Theta^{\infty}-\mrm{torsion}}(\QQ[\Theta]) ,
        \] 
        where 
        \begin{itemize}
        \item 
        $\QCoh(-)$ denotes the derived category of quasi-coherent sheaves;
        \item $D_{\Theta^{\infty}-\mrm{torsion}}(\QQ[\Theta])$ denotes the full subcategory of $D(\QQ[\Theta])$ such that all the cohomologies are $\Theta^{\infty}$-torsion. 
        \end{itemize} 
\end{prop}
\begin{proof}
It suffices to note that in characteristic $0$, $\GG_{a}\cong \GG_{a}^{\#}$. Then the results follow from the corresponding results in \cite{BhattLurie2022prismatic}. A more general result with a more complete proof is provided by \cite[Proposition 4.2.5]{Juan2024analyticdeRham}.
\end{proof}
Later, we will consider the subspace where the action of \(\Theta\) is nilpotent. This actually has a simple geometric interpretation using stacks.
\begin{dfn}[The functor \(E_{0}\)]\label{dfnFunctorE0}
Let \(M\in D(\QQ)\), equipped with an endomorphism \(\Theta\), i.e. \(M\in D(\QQ[\Theta])\), then we define the \emph{generalized eigenspace for \(\Theta=0\)} by \[E_{0}(M):=R\Gamma_{(\Theta)}(M):=\Fib(M\to M[1/\Theta]).\]
It is easy to see that \(E_{0}:D(\QQ[\Theta])\to D_{\Theta^{\infty}-\mrm{torsion}}(\QQ[\Theta])\)
is the right adjoint to the natural inclusion \(\iota:D_{\Theta^{\infty}-\mrm{torsion}}(\QQ[\Theta])\hookrightarrow D(\QQ[\Theta])\). 

More generally, for \(k\in\QQ\), we denote \(E_{k}(M):=\mrm{Fib}(M\to M[1/(\Theta-k)])\).
\end{dfn}
We thank Longke Tang for suggesting the following proof using stacks.
\begin{prop}[\(E_{0}\) is lax symmetric monoidal]
        \label{propf*equalsE0}
The inclusion $\widehat{\GG}_{a}\to \GG_{a}$ induces a map $f:B\widehat{\GG}_{a}\to B\GG_{a}$. 
Then the following diagrams commute \[
\begin{tikzcd}
\QCoh(B\widehat{\GG}_{a})\arrow[r,"\sim"]
& D(\QQ[\Theta])\\
\QCoh(B\GG_{a})\arrow[r,"\sim"]\arrow[u,"f^{*}"]
&
D_{\Theta^{\infty}-\mrm{torsion}}(\QQ[\Theta])\arrow[u,"\iota",hook], 
\end{tikzcd} \text{ }
\begin{tikzcd}
\QCoh(B\widehat{\GG}_{a})\arrow[r,"\sim"]\arrow[d,"f_{*}"]
& D(\QQ[\Theta])\arrow[d,"E_{0}"]\\
\QCoh(B\GG_{a})\arrow[r,"\sim"]
&
D_{\Theta^{\infty}-\mrm{torsion}}(\QQ[\Theta]),
\end{tikzcd}\] where the horizontal equivalences are given by Proposition \ref{propRepEqToStack}. 

In particular, \(E_{0}:D(\QQ[\Theta])\to D_{\Theta^{\infty}-\mrm{torsion}}(\QQ[\Theta])\) is lax symmetric monoidal for the convolution symmetric monoidal structure. 
\end{prop}
\begin{proof} 
By adjunction, it suffices to show that the first diagram commutes.
We need to use the explicit construction of the equivalences in Proposition \ref{propRepEqToStack}. 
See \cite{BhattLurie2022prismatic} for details. Given a \(\QQ[t]\)-comodule \(M\) (resp. \(\QQ\llbracket t\rrbracket\)-comodule), we have \(M\to M\otimes_{\QQ}\QQ[t]\) (resp. \(M\to M\otimes_{\QQ}\QQ\llbracket t\rrbracket\)), and then \(\Theta:M\to M\) corresponds to the coefficient of \(t\) on the right hand side. Then it is straight-forward to verify that the first diagram commutes.
\end{proof}

\begin{dfn}[Arithmetic Sen operator]\label{dfnArithSenOp}
We regard \(\CC_{p}\) as an \(\mbb{E}_{\infty}\)-algebra object in \(\mrm{Rep}_{\QQ_{p,\square}}(\Gal_{\QQ_{p}})\). We will refer to an object \(M\in \Mod_{\CC_{p}}(\mrm{Rep}_{\QQ_{p,\square}}(\Gal_{\QQ_{p}}))\) as a \emph{solid semi-linear \(\Gal_{\QQ_{p}}\)-representation over \(\CC_{p}\)}.   

Given \(M\in\Mod_{\CC_{p}}(\Rep_{\QQ_{p,\square}}(\Gal_{\QQ_{p}}))\), 
\(\Darith(M)\in \Mod_{\QQ_{p}(\zeta_{p^{\infty}})}(\mrm{Rep}_{\QQ_{p},\square}^{\la}(\Gamma))\), and \(\Theta\) is \(\QQ_{p}(\zeta_{p^{\infty}})\)-linear, 
as \(\Darith(\CC_{p})\cong \QQ_{p}(\zeta_{p^{\infty}})\). 

We will say that \(M\) \emph{admits an arithmetic Sen operator} if the natural morphism \(\Darith(M)\hatotimes_{\QQ_{p}(\zeta_{p^{\infty}})}\CC_{p}\to M\) 
is an isomorphism, and in this case, \(\Theta\) extends to a unique \(\CC_{p}\)-linear endomorphism \(\Theta\) of \(M\), which we refer to as the \emph{arithmetic Sen operator}.

We will moreover say that \(M\) is \emph{Hodge-Tate} if the action of \(\Theta\) is semisimple, with only finitely many eigenvalues, and all the eigenvalues are in \(\ZZ\). Note that in the derived setting, this is a structure rather than a property. 
\end{dfn}

\begin{eg}
Let \(\rho\) be a finite dimensional representation of \(\Gal_{\QQ_{p}}\) over \(\QQ_{p}\). Then \(\Darith(\rho\otimes\CC_{p})\cong D_{\mrm{sen}}(\rho)\). 
\end{eg} 
\begin{eg}\label{egOlaArithSen}
By the proof of \cite[Theorem 6.3.6]{Juan2022.09locallyShi},
we know $D_{\mrm{arith}}(\mO^{\la})$ is concentrated in degree $0$, and
$
\mO^{\la}\cong                  D_{\mrm{arith}}(\mO^{\la})\hatotimes_{\QQ_{p}(\zeta_{p^{\infty}})}\CC_{p},$
i.e. \(\mO^{\la}\) admits arithmetic Sen operator. 
\end{eg}

\begin{dfn}[The functor \(\hat{E}_{0}\)]
        \label{dfnBdRSemiLinear}
We regard \(B_{\dR}^{+}\) as an \(\mbb{E}_{\infty}\)-algebra object in \(\mrm{Rep}_{\QQ_{p,\square}}(\Gal_{\QQ_{p}})\). We will refer to an object \(M\in \Mod_{B_{\dR}^{+}}(\mrm{Rep}_{\QQ_{p,\square}}(\Gal_{\QQ_{p}}))\) as a \emph{solid semi-linear \(\Gal_{\QQ_{p}}\)-representation over \(B_{\dR}^{+}\)}.   

Given \(M\in\Mod_{B_{\dR}^{+}}(\Rep_{\QQ_{p,\square}}(\Gal_{\QQ_{p}}))\), 
we define 
\[\hatDarith(M):=\varprojlim_{n}\Darith(M/t^{n}) \in \Mod_{\QQ_{p}(\zeta_{p^{\infty}})\llbracket t\rrbracket}(\mrm{Rep}_{\QQ_{p},\square}^{\la}(\Gamma)),\] 
as \(\hatDarith(B_{\dR}^{+})\cong \QQ_{p}(\zeta_{p^{\infty}})\llbracket t\rrbracket\). \(\Theta\) acts on \(\QQ_{p}(\zeta_{p^{\infty}})\llbracket t\rrbracket\) via \(\frac{d}{dt}\), and the \(\Theta\)-action on \(\Darith(M)\) satisfies the Leibniz rule. 

We define \[
\hat{E}_{0}(M):=\varprojlim (E_{0}(\Darith(M/t^{n}))\hatotimes_{\QQ_{p}(\zeta_{p^{\infty}})}\CC_{p})\to \hat{E}_{0}(M[1/t]):=\varinjlim_{i}\hat{E}_{0}(M\cdot t^{-i}).
\]
where \(\cdot t^{-i}\) means twisting \(\Theta\)-action by \(-i\).

More generally, for \(M\in \Mod_{B_{\dR}^{+}}(\mrm{Fil}(\Rep_{\QQ_{p},\square}(\Gal_{\QQ_{p}})))\), where \(\Fil\) denotes the category of filtered objects as in \cite[\S 5.1]{BhattMorrowScholze2019topological}, we define \[
\hat{E}_{0}(M):=\varprojlim_{n}(E_{0}(\Darith(M/\Fil^{n}))\hatotimes_{\QQ_{p}(\zeta_{p^{\infty}})}\CC_{p})\to \hat{E}_{0}(M[1/t]):=\varinjlim_{n}\hat{E}_{0}(M\cdot t^{-n}).
\]

Assume that \(M/t\) is Hodge-Tate of weights \(a_{0}<a_{1}<\cdots<a_{n}\), i.e. \[M/t\cong \bigoplus_{i=0}^{n}(M/t)^{\Theta=-a_{i}},
\] then the \(t\)-adic ascending filtration on \(M\) (\(\mrm{Fil}^{t}_{-i}:=t^{i}M\)) induces a finite filtration on
\(\hat{E}_{0}(M[1/t])\), such that the non-trivial graded pieces are \[\gr^{t}_{-a_{i}}(\hat{E}_{0}(M[1/t]))\cong 
E_{0}(\gr^{t}_{-a_{i}}(M))\cong (M/t)^{\Theta=-a_{i}}(a_{i}). 
\] 
Moreover, the nilpotent action of \(\Theta\)
on \(\hat{E}_{0}(M[1/t])\) is homotopic to zero when restricted \(\gr^{t}_{*}\) (by a fixed null homotopy given by the structure of being Hodge-Tate), and in particular, \(\Theta^{n+1}\cong 0\).
\end{dfn}
\begin{dfn}[Fontaine operator]\label{dfnFontaineOp}
Assume that \(M/t\) is Hodge-Tate, and has only two Hodge-Tate weights \(a_{0}<a_{1}\). Then we have a fiber sequence \[(M/t)^{\Theta=-a_{0}}(a_{0})\to \hat{E}_{0}(M[1/t])\to (M/t)^{\Theta=-a_{1}}(a_{1}),
\] and the action of \(\Theta\) on \(\hat{E}_{0}(M[1/t])\) plus the null-homotopy of \(\Theta\)-action on the graded pieces induces a unique morphism 
\[N^{a_{1}-a_{0}}:(M/t)^{\Theta=-a_{1}}(a_{1})\to (M/t)^{\Theta=-a_{0}}(a_{0})
\] which we refer to as the \emph{Fontaine operator}.
\end{dfn}

We now recall the theorem of Fontaine, which says that the property of being de Rham is determined by the Fontaine operator.
\begin{prop}[{\cite[Théorème 4.1]{Fontaine2004arithmetique}}]\label{propFontaineOpeClassical}
        Let \(V\) be a finite dimensional Hodge-Tate representation of \(\Gal_{\QQ_{p}}\) over \(\QQ_{p}\). Then \(V\) is de Rham if and only if the action of \(\Theta\) on \(\hat{E}_{0}(V\otimes_{\QQ_{p}}B_{\dR})\) is zero.

        In particular, if \(V\) has only two Hodge-Tate weights \(a_{0}<a_{1}\), then \(V\)
        is de Rham if and only if the Fontaine operator \(N^{a_{1}-a_{0}}=0\).
\end{prop}

\section{Geometric Fontaine operator}\label{secGeoFontaine}

The goal of this section is to define the relevant objects and state the main geometric theorem (Theorem \ref{thmFontaine=Theta}). Subsection \ref{subsecBcoho} will recall the calculation of 
\(\fb\)-cohomology of \(\mO^{\la}\) in \cite{Pan22}, \cite{Pilloni22}, and define the geometric Fontaine operator (Corollary \ref{corGeomFontaine}) by applying the general construction in Definition \ref{dfnFontaineOp}. Subsection \ref{subsectionStatement} will state the main theorem (Theorem \ref{thmFontaine=Theta}), which vaguely says that ``geometric Fontaine operator=theta operator''. Subsection \ref{subsectionPerverse} defines perverse t-structures on \(D(\Fl_{\an})\) and on (some category related to) \(\QCoh_{\fg}(\Fl)^{\fn^{0}}\). These t-structures will be used in some later proofs. There are two separate cases in Theorem \ref{thmFontaine=Theta}, but using BGG complex, we prove in
Subsection \ref{subsecBGGComplex} that the two cases are equivalent. We will focus on one case, and prove Theorem \ref{thmFontaine=Theta} in the next section. 

\subsection{\(\fb\)-cohomology of \(\mO^{\la}\)}
\label{subsecBcoho}

\begin{notation}\label{egClaVBOla}
For $G=\GL_{2}(\QQ_{p})$, recall \(\mO_{G,1}\) from Definition \ref{dfnRla}.
Note that $\mO_{G,1}$ carries two (infinitesimal) actions of $G$, given respectively left multiplication and right multiplication. We denote them as $*_{1}$ and $*_{2}$ respectively. 
On $\mO_{G,1}\hatotimes\mO_{\Fl}$, there is a third action coming from the action of $G$ on $\mO_{\Fl}$, which we denote as $*_{3}$. 

Then $(\mO_{G,1}\hatotimes\mO_{\Fl},*_{1,3})$ gives an object in $\QCoh_{\fg}^{\rla}(\Fl)$ (Definition \ref{dfnRelLA}). We denote  \[ C^{\la}:= R\Gamma(\fn^{0},\mO_{G,1}\hat{\otimes\mO_{\Fl}})\cong (\mO_{G,1}\hatotimes\mO_{\Fl})^{(\fn^{0},*_{1,3})}. \]
 Then $C^{\la}\in \QCoh_{\fg}^{\rla}(\Fl)^{\fn^{0}}$.
 It still carries commuting actions $(\fg,*_{1,3})$, $(\fg,*_{2})$ and $(\fh,\theta_{\fh})$, with $(\fh,\theta_{\fh})$ coming from the action of $(\fh\otimes\mO_{\Fl}\cong\fb^{0}/\fn^{0},*_{1,3})$. We also note that $C^{\la}$ is $\GL_{2}(\QQ_{p})$-equivariant over $\Fl$ for the action $*_{1,2,3}$.
\end{notation}
\begin{thm}[{\cite[Theorem 4.1, Lemme 4.5, Lemme 4.6]{Pilloni22}, \cite[Theorem 6.3.6]{Juan2022.09locallyShi}}]\label{thmVBClaIsOla}
We have an \(G(\QQ_{p})\times \Gal_{\QQ_{p}}\)-equivariant isomorphism in \(\Mod(\mO^{\sm}\otimes\TT(K^{p}))\),  $$\VB(C^{\la},*_{1,3})\cong\mO^{\la},$$ along which $(\fg,*_{2})$
 corresponds to constant action of $\fg$.
  Moreover, \(\theta_{\fh}(1,0)\) acting on \(C^{\la}\) induces the arithmetic Sen operator on \(\mO^{\la}\) (Example \ref{egOlaArithSen}).
\end{thm}

We can then calculate the $\fb$-cohomology of $\mO^{\la}$ by first calculating that of \((C^{\la},*_{2})\). We start by introducing some notation.

\begin{notation}[The functor \(\mE_{(a,b)}\)]\label{notationEab}
For $(a,b)\in\ZZ^{\oplus 2}$, let \(M\in \Mod_{\QQ_{p}[\Theta]}(\mrm{Solid}_{\QQ_{p}})\) equipped with a commuting action of \(\fb\),
we define the functor         \[
        \mE_{(a,b)}(M):=E_{0}(\RHom_{\fb}((a,b),M))\otimes \chi^{(-a,-b)},
        \]  where $E_{0}$ is taking the nilpotent part with respect to the action of $\Theta$, and $-\otimes\chi^{(-a,-b)}$ only twists the action of $B(\QQ_{p})$ such that the action of $B(\QQ_{p})$ on \(\mE_{(a,b)}(M)\) is smooth. 

If \(M\in \Mod_{B_{\dR}^{+}}(\Rep_{\QQ_{p,\square}}(\Gal_{\QQ_{p}}))\) equipped with a commuting action of \(\fb\), such that \(M/t\) admits an arithmetic Sen operator, we define \[\hat{\mE}_{(a,b)}(M):=\hat{E}_{0}(\RHom_{\fb}((a,b),M))\otimes \chi^{(-a,-b)},
\] with \(\hat{E}_{0}\)
as in Definition \ref{dfnBdRSemiLinear}.

We will also define \(\mE_{(a,b)}(C^{\la}(i))\) using the same formula, where $E_{0}$ is taking the generalized eigenspace $\theta_{\fh}(1,0)=0$, $-(i)$ refers to a twist of action of $\theta_{\fh}(1,0)$ by $-i$, and \(\fb\) acts on \(C^{\la}\) via \(*_{2}\).

Then by Theorem \ref{thmVBClaIsOla}, \[\VB(\mE_{(a,b)}(C^{\la}(i))\cong \mE_{(a,b)}(\mO^{\la}(i)).\]
\end{notation}

\begin{dfn}\label{dfnidagger}
We will use Notation \ref{notationBruhatStrat}.

We define
\[i^{\dagger}(\mF):=\varinjlim_{\infty\in U}R\Gamma(U,\mF)\] with the colimit taken over the open neighborhoods of \(\infty\in\Fl\).
This gives a quasi-coherent sheaf on the dagger space $\infty^{\dagger}:=\varprojlim_{\infty\in U} U$, i.e. a point equipped with the structure sheaf \(\mO_{\Fl,\infty}:=\varinjlim_{\infty\in U}H^{0}(U,\mO_{\Fl})\). 

We define \(j_{!}:\QCoh(U_{w})\to \QCoh(\Fl)\) to be the left adjoint to \(j^{*}:\QCoh(\Fl)\to \QCoh(U)\)

This definition makes sure that for \(\mF\in\QCoh(\Fl)\), we have a fiber sequence in \(\QCoh(\Fl)\)         \[
        j_{!}j^{*}\mF\to\mF\to i_{*}i^{\dagger}\mF\xrightarrow{+1}.
\]
\end{dfn}
\begin{thm}[\cite{Pilloni22}]\label{mainthmPilloniOnFl}
Let     \( (a,b)\in\ZZ^{\oplus 2}, \) $k:=a-b+1$. 

(1) If     \( k>0, \)
then we have a $(B(\QQ_{p}),*_{2})\times\theta_{\fh}(1,0)$-equivariant isomorphism in $\QCoh^{\rla}_{\fg}(U)^{\fn^{0}}$    \[
        \RHom_{(\fb,*_{2})}((a,b),C^{\la})\otimes\chi^{(-a,-b)}\cong \mcal{N}_{b}\oplus \mcal{N}_{1+a} ,
        \] where $\mcal{N}_{1+a}\cong i_{*}i^{\dagger}\omega^{(1-b,-1-a)}_{\Fl}[-1]$, and $\mcal{N}_{b}\cong \left[\omega^{(-a,-b)}_{\Fl}-i_{*}i^{\dagger}\omega^{(-a,-b)}_{\Fl}[-1]\right]$. Moreover, $\theta_{\fh}(1,0)$ acts on $\mcal{N}_{t}$ by $-t$ for $t\in\{b,1+a\}$.

(2) If     \( k\le 0, \)
then we have a $(B(\QQ_{p}),*_{2})\times\theta_{\fh}(1,0)$-equivariant isomorphism in $\QCoh^{\rla}_{\fg}(U)^{\fn^{0}}$        
  \[\RHom_{(\fb,*_{2})}((a,b),C^{\la})\otimes\chi^{(-a,-b)}\cong
      \left[  \mcal{N}_{b} - \mcal{N}_{1+a}[-1] \right],
        \]with $\mcal{N}_{b}\cong j_{!}j^{*}\omega^{(-a,-b)}_{\Fl}$, and $\mcal{N}_{1+a}\cong i_{*}i^{\dagger}\omega^{(1-b,-1-a)}_{\Fl}[-1]$. Moreover, $\theta_{\fh}(1,0)$ acts on $\mcal{N}_{t}$ by $-t$ for $t\in\{b,1+a\}$.

In particular, if $k\ne 0$, we have isomorphisms                \[
                \mE_{(a,b)}(C^{\la}(i))\cong \begin{cases}
                \mcal{N}_{i},&i=b,1+a;\\
                0,&\text{else},
                \end{cases}
                \] with \(\mcal{N}_{i}\) described as in (1) or (2).
\end{thm}
\begin{rmk}
Note that the action of $\theta_{\fh}(1,0)$ on $\omega^{(a,b)}_{\Fl}$ is the scalar multiplication by $b$, following the normalization of \cite{Pilloni22}.
\end{rmk}

We can now apply the functor \(\VB(-)\).
\begin{thm}[\cite{Pilloni22}]\label{mainthmPanPilloni}
Let     \( (a,b)\in\ZZ^{\oplus 2}. \)
Then 

(1) if     \( a-b\ge 0, \)
then we have a $\TT(K^{p})\times B(\QQ_{p})\times \Gal_{\QQ_{p}}$-equivariant isomorphism  in $D(\Fl_{\an})$    \[
        \RHom_{\fb}((a,b),\mO^{\la})\otimes\chi^{(-a,-b)}\cong \mcal{N}_{b}(-b)\oplus \mcal{N}_{1+a}(-1-a) ,
        \] where $\mcal{N}_{1+a}\cong i_{*}\omega^{(1-b,-1-a),\sm}[-1]$, and $\mcal{N}_{b}$ lies in a distinguished triangle         \[
                \omega^{(-a,-b),\sm}\to \mcal{N}_{b}\to i_{*}\omega^{(-a,-b),\sm}[-1]\xrightarrow{+1}.
                \]  

(2) if     \( a-b <0, \)
then we have a $\TT(K^{p})\times B(\QQ_{p})\times \Gal_{\QQ_{p}}$-equivariant distinguished triangle in $D(\Fl_{\an})$       
  \[
        \mcal{N}_{b}(-b) \to \RHom_{\mfk{b}}((a,b),\mO^{\la})\otimes \chi^{(-a,-b)}\to \mcal{N}_{1+a} (-1-a),
        \] where \(\mcal{N}_{1+a}\cong i_{*}\omega^{(1-b,-1-a),\sm}[-1]\) and \(\mcal{N}_{b}\cong j_{!}\omega^{(-a,-b),\sm}\).

In particular, if $a-b\ne -1$, we have isomorphisms                \[
                \mE_{(a,b)}(\mO^{\la}(i))\cong \begin{cases}
                \mcal{N}_{i},&i=b,1+a;\\
                0,&\text{else},
                \end{cases}
                \] with \(\mcal{N}_{i}\) described as in (1) or (2).
\end{thm}
\begin{proof}
Using Theorem \ref{thmMainThmGeomSenVB} and Theorem \ref{thmVBClaIsOla}, the results follow from Theorem \ref{mainthmPilloniOnFl} and Lemma \ref{rmkDeduceSVfromFl} below. 
\end{proof}
\begin{lem}\label{rmkDeduceSVfromFl}
We have \(\VB(j_{!}j^{*}\omega^{(-a,-b)}_{\Fl})\cong j_{!}\omega^{(-a,-b),\sm}\) and \(\VB(i_{*}i^{\dagger}\omega^{(-a,-b)}_{\Fl})\cong i_{*}\omega^{(-a,-b),\sm}\).
\end{lem}
\begin{proof}
Given Example \ref{egVBomegaEquOmega}, we only note that the same formalism of $\VB$ also works for $\infty^{\dagger}$ (by extending the functor \(\VB\) by colimits) and for $U_{w}$, and that $\VB$ commutes with $i^{\dagger}$ as well as with $j^{*}$. 
\end{proof}

We can also apply the construction of Definition \ref{dfnBdRSemiLinear}
to the period sheaves.
\begin{dfn}[de Rham period sheaves]
        \label{dfnShvBdRonFl}
For \(n\in\NN\),
we define the sheaf $\BdR^{+}/t^{n}$ on $\Fl_{\an}$ as $\pi_{\HT,*}(\mbb{B}^{+}_{\dR,\log,\mX_{\Kpp}}/t^{n}|_{\mX_{K^{p}}})$, where $\mbb{B}^{+}_{\dR,\log,\mX_{\Kpp}}$ is the pro-Kummer-\'etale de Rham sheaf on $\mX_{\Kpp}$ (\cite[Definition 2.2.3]{DLLZ2022logarithmicJAMS})
and $\mX_{K^{p}}$
is regarded as the object in the pro-Kummer-\'etale site of $\mX_{\Kpp}$. We further define \(\mbb{B}_{\dR}^{+}:=\varprojlim \mbb{B}_{\dR}^{+}/t^{n}\). 

We define \(\BdR^{+,\la}/t^{n}\)
as the sheaf of derived $G(\QQ_{p})$-locally analytic vectors of \(\BdR^{+}/t^{n}\). By Proposition \ref{propORlaConcentrat}, we know that \(\BdR^{+,\la}/t^{n}\) is concentrated in degree \(0\), and is filtered by 
\(\mO^{\la}(i)\) for \(0\le i\le n-1\). We define \(\BdR^{+,\la}:=\varprojlim_{n}\BdR^{+,\la}/t^{n}\) and \(\BdR^{\la}:=\BdR^{+,\la}[1/t]\).          

We define similarly $\OBdR^{+,\la}/\Fil^{n}$ on $\Fl_{\an}$ as the subsheaf of $G(\QQ_{p})$-locally analytic vectors in $\pi_{\HT,*}(\mO\mbb{B}^{+}_{\dR,\log,\mX_{\Kpp}}/\Fil^{n}|_{\mX_{K^{p}}})$ for $n\in\NN$, where \(\mO\mbb{B}^{+}_{\dR,\log,\mX_{\Kpp}}\)
is as in \cite[Definition 2.2.10]{DLLZ2022logarithmicJAMS}. We define \(\OBdR^{+,\la}:=\varprojlim_{n}\OBdR^{+,\la}/\Fil^{n}\), and \(\OBdR^{+}:=\OBdR^{+,\la}[1/t]\). 
\end{dfn}
\begin{rmk}
We are using the non-standard notations and writing $\mO\mbb{B}^{+}_{\dR}:=\mO\mbb{B}^{+}_{\dR,\log}$ for simplicity of notations.
\end{rmk}
\begin{cor}[Geometric Fontaine operator]\label{corGeomFontaine}
For \((a,b)\in\ZZ^{\oplus2},k:=a+1-b\ne 0\), we have a fiber sequence \[\mcal{N}_{\max(b,1+a)}\to \hat{\mE}_{(a,b)}(\BdR^{\la})\to \mcal{N}_{\min(b,1+a)},
\] and the action of \(\Theta\) on \(\hat{\mE}_{(a,b)}(\BdR^{\la})\)
is induced by a unique morphism \[N^{|k|}:\mcal{N}_{\min(b,1+a)}\to \mcal{N}_{\max(b,1+a)},
\] which we refer to as the \emph{geometric Fontaine operator}.
Here \(\hat{\mE}_{(a,b)}(-)\) is as in Notation \ref{notationEab}, and \(\mcal{N}_{i}\)'s are as in Theorem \ref{mainthmPanPilloni}. 
\end{cor}
\begin{proof}
By Theorem \ref{mainthmPanPilloni}, \(\BdR^{+}/t\) is Hodge-Tate, and then the rest follows as in Definition \ref{dfnBdRSemiLinear} and \ref{dfnFontaineOp}.
\end{proof}

\subsection{Geometric Fontaine operator}\label{subsectionStatement} The main theorem of this article concerns an alternative description of the geometric Fontaine operator \(N^{|k|}\) in Corollary \ref{corGeomFontaine}. 




\begin{thm}\label{thmFontaine=Theta}

Let the notation be as in Corollary \ref{corGeomFontaine}. Then the geometric Fontaine operator \( N^{|k|}:\mcal{N}_{\min(b,1+a)}\to \mcal{N}_{\max(b,1+a)} \)
can be described as follows: 

(1) If $k>0$, $N^{k}$ coincides with the composition \begin{align*}N^{k}:\mcal{N}_{b}\to i_{*}\omega^{(-a,-b),\sm}[-1]\xrightarrow{\theta^{k}} i_{*}\omega^{(1-b,-1-a),\sm} [-1]\cong \mcal{N}_{1+a}.
\end{align*}

(2) If $k<0$, $N^{-k}$ coincides with either composition of the following square   \[
         \begin{tikzcd} \mcal{N}_{1+a}\arrow[r,equal] & i_{*}\omega^{(1-b,-1-a),\sm}[-1]\arrow[rd,"N^{-k}"]\arrow[r,"\mrm{Cou}"]\arrow[d,"\theta^{-k}"] & j_{!}\omega^{(1-b,-1-a),\sm}\arrow[d,"\theta^{-k}"] \\  & i_{*}\omega^{(-a,-b),\sm}[-1]\arrow[r,"\mrm{Cou}"]      &
          j_{!}\omega^{(-a,-b),\sm}\arrow[r,equal] & \mcal{N}_{b},
                 \end{tikzcd}        
                \] where the Cousin map $\mrm{Cou}$ is defined as the connecting morphism coming from the natural extension         \[
0\to j_{!}\omega^{(a,b),\sm}\to \omega^{(a,b),\sm}\to i_{*}\omega^{(a,b),\sm}\to 0 .
                        \]
\end{thm}
\begin{rmk}
If $k=0$, with suitable formulation, the arithmetic Sen operator can be described in the same way as (2). 
This case is essentially Proposition 6.3 of \cite{Pilloni22}.
\end{rmk}

The proof of Theorem \ref{thmFontaine=Theta} 
will be finished in Subsection \ref{subsectionGeneral(a,b)a-bge0}. In fact, part (2) follows from part (1) (Corollary \ref{cork<0FollowsFromk>0}). Roughly,
parts (1) and (2) are related by the BGG complex to be discussed in Subsection \ref{subsecBGGComplex}, and the case \(k<0\) injects into the case \(k>0\) in the perverse t-structure to be defined in Subsection \ref{subsectionPerverse}.  See the proof of Corollary \ref{cork<0FollowsFromk>0} for details, especially (\ref{equaBasicRelk>0andk<0}).

Let us first explain one reduction step: 
\begin{rmk}\label{rmkTwistInBdR}
Although we have stated the theorem for general $(a,b)\in\ZZ^{\oplus 2}$ with $a-b+1\ne 0$, 
the cases for $(a,b)$ and for  $(a+1,b+1)$ 
are essentially the same. In particular, we could always restrict ourselves to the case $(k,0)$. Let us explain the reason here: taking the $1$-dimensional representation $\det$, we obtain the automorphic vector bundle $\omega^{(1,1)}_{\mX_{\Kpp}}$ (with Hodge filtration and Gauss-Manin connection), and $\underline{\det}$ the associated local system. By the Riemann-Hilbert correspondence (\cite{DLLZ2022logarithmicJAMS}), we have \[
                \omega^{(1,1)}_{\mX_{\Kpp}}\otimes_{\mO_{\mX_{\Kpp}}}\mO \mbb{B}_{\dR,\mX_{\Kpp},\log}\cong \underline{\det}\otimes_{\QQ_{p}}\mO \mbb{B}_{\dR,\mX_{\Kpp},\log} .
                \] 
so we have a canonical map                 \[
                \omega^{(1,1)}_{\mX_{\Kpp}}\otimes_{\QQ_{p}}\underline{\det}^{-1}\to \mO \mbb{B}_{\dR,\mX_{\Kpp},\log}  .
                \] 
                Evaluating on $\mX_{K^{p}}$ and pushing forward to $\Fl$, we obtain a canonical non-zero map                \[
                                \omega^{(1,1),\sm}\otimes_{\QQ_{p}}{\det}^{-1}\to t^{-1}\OBdR^{+,\la},
                                \] which is compatible with connections on both sides.
The polarization gives a canonical non-zero section $s$ of $\omega^{(1,1),\sm}\cong \omega_{E}^{-1}\otimes\omega_{E^{t}}$. We choose an arbitrary generator of the $1$-dimensional $\QQ_{p}$-vector space ${\det}^{-1}$,
say $v_{0}$, then $s_{1}:=s\otimes v_{0}$
gives a section of $t^{-1}\OBdR^{+,\la}$.
                                 Now that $\nabla_{\mrm{GM}}(s)=0$, we know $\nabla(s_{1})=0$, which implies 
by \cite[Corollary 6.13]{Scholze13} that $s_{1}$   
is a section of 
$t^{-1}\BdR^{+,\la}$. 

We note that the action of $B(\QQ_{p})$ on $s_{1}$ is given by ${\det}^{-1}=(-1,-1)$.
On the other hand, the action of $\Gal_{\QQ_{p}}$
on $s_{1}$ is induced by its action on $s\in H^{0}(\Fl,\omega^{(1,1),\sm})$, which is trivial. 

We note that the unique locally analytic function in $\mX_{K^{p}}$ whose $\GL_{2}(\QQ_{p})$-action is given by determinant is $t_{0}\in H^{0}(\Fl,\mO^{\la})$ constructed in \cite[Subsection 4.3.1]{Pan22} (which is denoted as $t$ loc. cit.), which comes essentially from the Tate pairing. However, $\Gal_{\QQ_{p}}$ acting on $t_{0}$ by $\chi_{\mrm{cycl}}$. Therefore, we know that $s_{1}\notin \BdR^{+,\la}, $
and its image along the projection         \( t^{-1}\BdR^{+,\la}\twoheadrightarrow \mO^{\la}(-1) \)
is given by $t^{-1}t_{0}$. 

Thus by multiplying by 
$s_{1}$ on $\BdR$, the action of $\Gal_{\QQ_{p}}$ is unchanged, the Hecke action is twisted by $\omega^{(1,1),\sm}$, and the action of $B(\QQ_{p})$ is changed by ${\det}^{-1}$, so we can reduce the case of $(a,b)$ to that of $(a-1,b-1)$.
\end{rmk}

\subsection{Perversity}\label{subsectionPerverse}

We can observe that \(\RHom_{\fb}((a,b),\mO^{\la})\) looks like a perverse sheaf.
In this subsection, we define a perverse t-structure on \(D(\Fl_{\an})\) to make this idea precise. 
This will be used in 
the proof of Corollary \ref{cork<0FollowsFromk>0}. 

\subsubsection{Perverse t-structure on \(D(\Fl_{\an})\)}
We will use the \emph{recollement} of t-structures from \cite{BBDG2018faisceaux}. More precisely, we fix the Bruhat stratification  \( \Fl=\{\infty\}\cup U_{w}, \) and assign them with dimension $0$ and $1$ respectively.
\begin{dfn}\label{dfnPerverseShv}
We define         \[
        ^{p}\mcal{D}^{\ge i}:=\{\mF\in\mcal{D}(\Fl_{\an}):j^{*}\mF\in\mcal{D}^{\ge i}_{U_{w}},i^{!}\mF\in \mcal{D}^{\ge i+1}_{\infty} \},
        \] and         \[
                ^{p}\mcal{D}^{\le i}:=\{
\mF\in\mcal{D}(\Fl_{\an}):j^{!}\mF\in\mcal{D}^{\le i}_{U_{w}},i^{-1}\mF\in \mcal{D}^{\le i+1}_{\infty}
                \} .
                \] We define the category of \emph{(Bruhat) perverse sheaves} 
                as     \( \Perv:=\pmD^{\ge 0}\cap \pmD^{\le 0}. \)
\end{dfn}

\begin{rmk}
Here $i^{!}$ is defined as follows: we first define the non-derived $i^{!}_{0}$ as taking the sections with support at $\infty$, and then define $i^{!}$ as the right derived functor of $i^{!}_{0}$. 
\end{rmk}

\begin{prop}[{\cite[Theorem 1.4.10]{BBDG2018faisceaux}}]\label{propPerverseBBDG}
        \( (^{p}\mcal{D}^{\le 0},^{p}\mcal{D}^{\ge0}) \)
        defines a $t$-structure of $\mcal{D}(\Fl_{\an})$.
\end{prop}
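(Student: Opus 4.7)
The plan is to deduce the proposition from the general gluing theorem for t-structures along a recollement, Theorem 1.4.10 of \cite{BBDG2018faisceaux}, applied to the open-closed decomposition $\Fl=\{\infty\}\sqcup U_{w}$. With the standard t-structure on $\mcal{D}_{U_{w}}$ and the standard t-structure on $\mcal{D}_{\infty}$ shifted by $[1]$, the subcategories produced by the recipe of loc.\ cit.\ coincide verbatim with $\pmD^{\le i}$ and $\pmD^{\ge i}$ of Definition \ref{dfnPerverseShv}; the ``$+1$'' appearing on the $\infty$-stratum in those definitions encodes precisely this one-step shift.

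First I would set up the recollement $(\mcal{D}_{\infty},\mcal{D}(\Fl_{\an}),\mcal{D}_{U_{w}})$ in the solid analytic setting. Concretely, this amounts to verifying that $i_{*}$, $j_{!}$, $j_{*}$ are fully faithful, that $j^{*}i_{*}=0$, that the adjunctions $(j_{!},j^{*})$, $(j^{*},j_{*})$, $(i^{*},i_{*})$, $(i_{*},i^{!})$ hold with the expected unit/counit identities, and that there are functorial localization triangles
\[
j_{!}j^{*}\to\id\to i_{*}i^{*}\xrightarrow{+1},\qquad i_{*}i^{!}\to\id\to j_{*}j^{*}\xrightarrow{+1}.
\]
These are standard consequences of the open/closed decomposition of a topological space by a point and its complement, and they transfer to sheaves of solid $\QQ_{p}$-modules because the statements are phrased entirely in terms of adjoints and fiber sequences that make sense coefficient-wise. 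The derived functor $i^{!}$ appearing here is exactly the one constructed by hand just before the proposition, as the right derived functor of sections supported at $\infty$.

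Once the recollement is in hand, the remainder is formal book-keeping: Theorem 1.4.10 of \cite{BBDG2018faisceaux} produces the truncation triangles of the glued t-structure by splicing the truncation triangles on each stratum via the two localization triangles above, while the orthogonality $\Hom(\pmD^{\le -1},\pmD^{\ge 0})=0$ follows from the corresponding orthogonality on each stratum combined with $j^{*}i_{*}=0$ and the adjunctions. The main technical obstacle is therefore the recollement itself, in particular fully faithfulness of $i_{*}$, $j_{!}$, $j_{*}$ and the existence of the second localization triangle $i_{*}i^{!}\to\id\to j_{*}j^{*}\xrightarrow{+1}$ in the solid derived category on $\Fl_{\an}$. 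Everything beyond that is a direct citation.
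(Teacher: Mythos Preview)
Your proposal is correct and matches the paper's approach: the paper simply cites Theorem 1.4 of \cite{BBDG2018faisceaux} without further proof, relying on exactly the recollement/gluing argument you have spelled out. Your more detailed unpacking of the recollement data and the application of the gluing theorem is precisely what the citation is meant to invoke.
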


\begin{lem}\label{lemPerversity} For $(a,b)\in\ZZ^{\oplus 2}$, 
    \( j_{!}\omega^{(a,b),\sm}\), $\omega^{(a,b),\sm}$  and     \( i_{*}\omega^{(a,b),\sm}[-1] \)
    are perverse sheaves. 
\end{lem}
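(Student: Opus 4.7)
The plan is to verify each of the three defining conditions of a Bruhat perverse sheaf from Definition~\ref{dfnPerverseShv} directly. Unfolding the definitions, $\mF\in\Perv$ amounts to (i) $j^{*}\mF$ being concentrated in degree $0$ on $U_{w}$, (ii) $i^{*}\mF\in\mcal{D}_{\infty}^{\le 1}$, and (iii) $i^{!}\mF\in\mcal{D}_{\infty}^{\ge 1}$, where I use that $j^{!}=j^{*}$ (since $j$ is open) and $i^{-1}=i^{*}$ (since $i$ is a closed point). The skyscraper $i_{*}\omega^{(a,b),\sm}[-1]$ is then perverse by a formal verification: $j^{*}i_{*}=0$ settles (i), and since $i^{*}i_{*}=i^{!}i_{*}=\id$ on sheaves at $\{\infty\}$, both $i^{*}$ and $i^{!}$ return $\omega^{(a,b),\sm}|_{\infty}[-1]$, which lives in degree $1$ and fulfills (ii) and (iii) simultaneously.

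For $j_{!}\omega^{(a,b),\sm}$, condition (i) follows from $j^{*}j_{!}=\id$ and (ii) from $i^{*}j_{!}=0$ for extension by zero. For (iii), I would invoke the recollement distinguished triangle $i_{*}i^{!}F\to F\to Rj_{*}j^{*}F$ applied to $F=j_{!}\omega^{(a,b),\sm}$: using $i^{*}F=0$, this yields $i^{!}F\cong (i^{*}Rj_{*}\omega^{(a,b),\sm}|_{U_{w}})[-1]$, which lies in $\mcal{D}_{\infty}^{\ge 1}$ because $Rj_{*}$ sends ordinary sheaves to complexes in non-negative degrees and $i^{*}$ is t-exact on sheaves.

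For $\omega^{(a,b),\sm}$ itself, (i) is immediate since $\omega^{(a,b),\sm}|_{U_{w}}$ is a sheaf, and (ii) holds because the stalk $i^{*}\omega^{(a,b),\sm}$ lives in degree $0$. The only nontrivial condition, and the main obstacle, is (iii): one must show that $\omega^{(a,b),\sm}$ has no nonzero sections supported at $\infty$, i.e.\ $i^{!}_{0}\omega^{(a,b),\sm}=0$. My plan is to establish this by descent to finite level: by the affinoidness of $\pi_{\HT}$, a neighborhood $U$ of $\infty$ in $\Fl$ may be chosen so that $\pi_{\HT}^{-1}(U)$ descends to an open $U_{K_{p}}\subset\mX_{K^{p}K_{p}}$ for $K_{p}$ small enough. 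A section $s\in\omega^{(a,b),\sm}(U)$ vanishing on $U\setminus\{\infty\}$ then comes, through the defining colimit $\omega^{(a,b),\sm}=\pi_{\HT,*}(\varinjlim_{K_{p}}\pi_{K_{p}}^{-1}\omega^{(a,b)}_{\mX_{K^{p}K_{p}}})$, from a section of the coherent line bundle $\omega^{(a,b)}_{\mX_{K^{p}K_{p}}}$ on $U_{K_{p}}$ vanishing on the complement of the preimage of $\infty$; since this preimage is a proper closed analytic subset of the smooth adic curve $\mX_{K^{p}K_{p}}$ and $\omega^{(a,b)}_{\mX_{K^{p}K_{p}}}$ is torsion-free, such a section must be zero, and hence $s=0$ in the colimit.
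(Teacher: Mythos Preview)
Your proof is correct and follows essentially the same route as the paper. The only minor difference is in handling $j_{!}\omega^{(a,b),\sm}$: the paper first proves $i_{0}^{!}\omega^{(a,b),\sm}=0$ and then deduces the $j_{!}$ case from the inclusion $i_{0}^{!}(j_{!}\omega^{(a,b),\sm})\subset i_{0}^{!}\omega^{(a,b),\sm}$, whereas you argue directly via the recollement triangle to get $i^{!}j_{!}\omega^{(a,b),\sm}\cong (i^{*}Rj_{*}\omega^{(a,b),\sm}|_{U_{w}})[-1]\in\mcal{D}_{\infty}^{\ge 1}$; both are equally short once $\omega^{(a,b),\sm}$ is done, and the key substantive step---vanishing of sections of $\omega^{(a,b),\sm}$ supported at $\infty$ by descent to finite level and density---is the same.
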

\begin{proof}
 \( i_{*}\omega^{(a,b),\sm}[-1] \) is clearly perverse. For $\omega^{(a,b),\sm}$, we have $i^{-1}\omega^{(a,b),\sm}\in \mcal{D}^{\le 0}_{\infty}$. We claim that we have $i^{!}\omega^{(a,b),\sm}\in \mcal{D}^{\ge 1}_{\infty}$. This is because     \( H^{0}(i^{!}\omega^{(a,b),\sm})=i_{0}^{!}\omega^{(a,b),\sm}. \) Assume that we have     \( s\in H^{0}(\infty,i_{0}^{!}\omega^{(a,b),\sm}), \)
then $s$ gives rise to a section     \( s\in H^{0}(\Fl,\omega^{(a,b),\sm}) \)
that is supported at $\infty$. $s$ arise from some $s_{0}\in H^{0}(\mX_{K},\omega^{(a,b)})$, but this forces $s$ to be zero because the image of $U_{w}$ in $\mX_{K}$ is dense. Now we have         \( i_{0}^{!}(j_{!}\omega^{(a,b),\sm})\subset i_{0}^{!}\omega^{(a,b),\sm}, \)
and thus we also obtain the perversity of $j_{!}\omega^{(a,b),\sm}$.
\end{proof}

\begin{cor}\label{corPerverse}
For   \( (a,b)\in\ZZ^{\oplus2}, \)
    \( \RHom_{\mfk{b}}((a,b),\mO^{\la})\in \Perv. \)
    Moreover, for any $k\in\ZZ$,     \( \mE_{(a,b)}(\mO^{\la}(k))\in \Perv \).
\end{cor}
\begin{proof}
This follows immediately from Theorem \ref{mainthmPanPilloni} and Lemma \ref{lemPerversity}.
\end{proof}

\subsubsection{t-structure on quasicoherent sheaves}
It will also be useful to have a t-structure (perverse or natural) on \(\QCoh^{\rla}_{\fg}(\Fl)^{\fn^{0}}\), such that the sheaves $\omega^{(a,b)}_{\Fl},i_{*}i^{\dagger}\omega^{(a,b)}_{\Fl}[-1]$, and $j_{!}j^{*}\omega^{(a,b)}_{\Fl}$ lie in the heart. This part will only be used in some technical proof, e.g. Lemma \ref{lemCasek=1CoveredInPilloni}, proof of Lemma \ref{lemFaltingsExtOnFl}, and that of Proposition \ref{propCalculateg-cohomo}.

The problem is that the derived $\infty$-category defined in \cite{Andreychev21pseudocoherent} 
does not carry a natural t-structure. We apply the following general construction to produce a t-structure (on another suitable category): 
\begin{lem}
\label{lemPerverseOnQCoh}
Let $\mcal{C}_{0}\subset\QCoh_{\fg}(\Fl)^{\fn^{0}}$ be the full subcategory spanned by extensions of    \( \omega^{(a,b)}_{\Fl},i_{*}i^{\dagger}\omega^{(a,b)}_{\Fl}[-1],j_{!}j^{*}\omega^{(a,b)}_{\Fl} \) for all $(a,b)\in\ZZ^{\oplus2}$. Let $B_{0}^{\le0}$ (resp. $B_{0}$) be the full subcategory of     \( \QCoh_{\fg}(\Fl)^{\fn^{0}} \) generated
by     \( \mcal{C}_{0} \) under small colimits and extensions (resp. small colimits, extensions and shift). Then $B_{0}$ is a stable subcategory. 

Let $B:=\Ind(B_{0})$. Then \(B\) admits a ``\emph{perverse t-structure}"     \( ({}^{p}B^{\le0},{}^{p}B^{\ge0}) \) such that $\mcal{C}_{0}\subset {}^{p}B^{\ge 0}\cap {}^{p}B^{\le 0}={}^{p}B^{\heartsuit}$. We will denote by ${}^{p}H^{i}(-)$ the $i$-th cohomology with respect to this perverse t-structure.
\end{lem}
\begin{rmk}
The embedding $\mcal{C}_{0}\hookrightarrow B$ is fully faithful. Thus the abelian category ${}^{p}B^{\heartsuit}$ provides 
a convenient ambient category.
\end{rmk}
\begin{proof}
There is no nonzero map $i_{*}i^{\dagger}\omega^{(a,b)}_{\Fl}\to\omega^{(a',b')}_{\Fl}$ or to $j_{!}j^{*}\omega^{(a',b')}_{\Fl}$, which can be seen by taking an affinoid chart around $\infty$.
and we have a canonical colimit-preserving fully faithful functor
    \( \Ind(B_{0}^{\le0})\hookrightarrow \Ind(B_{0}) \). Thus by adjoint functor theorem (\cite[Corollary 5.5.2.9]{Lurie2009HTT}), we have a right adjoint $L:\Ind(B_{0})\to \Ind(B_{0}^{\le0})$. Let us denote $B:=\Ind(B_{0})$. Since $\Ind(B_{0}^{\le0})$ is closed under extension, by \cite[Proposition 1.2.1.16]{Lurie2017HA}, $\Ind(B_{0}^{\le0})$ defines a t-structure of $B$, say     \( ({}^{p}B^{\le0},{}^{p}B^{\ge0}), \) 
such that     \( {}^{p}B^{\le0}=\Ind(B_{0}^{\le0}). \) 
Then we have $\mcal{C}_{0}\subset B_{0}^{\le0}\subset {}^{p}B^{\le0}$. Moreover, ${}^{p}B^{\ge 0}\cong\{F\in B|\Hom(F',F[-1])\cong0,\;\forall F'\in {}^{p}B^{\le0}\}\cong \{F\in B|\RHom(F',F)\in D^{\ge0},\;\forall F'\in {}^{p}B^{\le0}\}$.
Now for any $F\in\mcal{C}_{0}$,     \( \{F'\in B|\RHom(F',F)\in D^{\ge 0}\} \)
is closed under colimits and extensions, and contains $\mcal{C}_{0}$, which implies that it contains ${}^{p}B^{\le 0}$. Thus we know $\mcal{C}_{0}\subset {}^{p}B^{\ge 0}$. Hence $\mcal{C}_{0}\subset {}^{p}B^{\ge 0}\cap {}^{p}B^{\le 0}={}^{p}B^{\heartsuit}$. 
\end{proof}
\begin{rmk}[Natural t-structure]\label{rmkNaturalt-str}
Let $\mcal{C}_{1}\subset\QCoh_{\fg}(\Fl)^{\fn^{0}}$ be the full subcategory spanned by extensions of    \( \omega^{(a,b)}_{\Fl},i_{*}i^{\dagger}\omega^{(a,b)}_{\Fl},j_{!}j^{*}\omega^{(a,b)}_{\Fl} \) for all $(a,b)\in\ZZ^{\oplus2}$.
If we replace $\mcal{C}_{0}$ with $\mcal{C}_{1}$, then the same proof still works, and produce another $t$-structure $(B^{\le0},B^{\ge0})$ on $B$, such that $\mcal{C}_{1}\subset B^{\heartsuit}:=B^{\le0}\cap B^{\ge0}$.  We will denote by $H^{i}(-)$ the $i$-th cohomology with respect to this \emph{natural t-structure}.
\end{rmk}


\subsection{BGG complex}\label{subsecBGGComplex}
In this subsection, we study the BGG complex, and the induced morphism on \(\fb\)-cohomologies. This will allow us to relate (1) and (2) in Theorem \ref{thmFontaine=Theta}. As a byproduct, 
we will also compute $$\RHom_{\fg}(\Sym^{k-1}V,\mO^{\la}),$$ 
where $V$ denotes the standard representation of $\GL_{2}$,
which is a reinterpretation of \cite[Theorem 0.5]{Emerton06}.
\begin{dfn}[Verma modules]
For any $(a,b)\in\ZZ_{p}^{\oplus}$, let $\Delta_{(a,b)}:=U(\fg)\otimes_{U(\fb)}\chi^{(a,b)}$ denote the Verma module. In particular, we have \[\RHom_{\fg}(\Delta_{(a,b)},C^{\la})\cong \RHom_{\fb}((a,b),C^{\la}).\]
\end{dfn}
Let \(k\in\ZZ_{\ge 1}\) and \(m\in\ZZ\).
We have the classical Bernstein-Gelfand-Gelfand complex of $\fg$-representations (\cite{BernsteinGelfandGelfand1976certainCat}) 
\begin{align}\label{alignBGGComp}
0\to \Delta_{(-1+m,k+m)}\to \Delta_{(k-1+m,m)}\to \Sym^{k-1}V\otimes \det{}^{m}\to 0.
\end{align}
Then we have a distinguished triangle \begin{align}\label{alignExactFrombTog}
        \RHom_{\fg}(\Sym^{k-1}V\otimes\det{}^{m},C^{\la})\to \RHom_{\fb}((k-1+m,m),C^{\la})\\
        \to \RHom_{\fb}((-1+m,k+m),C^{\la})\xrightarrow{+1}.
\end{align}
Note that the sequence (\ref{alignExactFrombTog})
is equivariant for $(\fg,B(\QQ_{p}))$-actions, and also for the horizontal action \((\fh,\theta_{\fh}).\) Recall that the action of $\theta_{\fh}(1,0)$ is semisimple, and has eigenvalues $-k-m,-m$ by Theorem \ref{mainthmPilloniOnFl}. 

Therefore, the sequence (\ref{alignExactFrombTog}) has a decomposition into $\theta_{\fh}(1,0)=-m$ and $=-k-m$ described by Theorem \ref{mainthmPilloniOnFl}:  \begin{align}\label{alignExactCohbtoCohg1}
        \RHom_{\fg}(\Sym^{k-1}V\otimes\det{}^{m},C^{\la})^{\theta_{\fh}(1,0)=-k-m}\to \mcal{N}_{k+m}^{(1)}\to \mcal{N}_{k+m}^{(2)}\xrightarrow{+1},
\end{align}
with \(\mcal{N}_{k+m}^{(1)}\cong i_{*}i^{\dagger}\omega^{(1-m,-k-m)}_{\Fl}[-1] \)
and \(\mcal{N}_{k+m}^{(2)}\cong j_{!}j^{*}\omega^{(1-m,-k-m)}_{\Fl}\),
and \begin{align}\label{alignExactCohbtoCohg2}
        \RHom_{\fg}(\Sym^{k-1}V\otimes\det{}^{m},C^{\la})^{\theta_{\fh}(1,0)=-m}\to \mcal{N}_{m}^{(1)}\to \mcal{N}_{m}^{(2)}\xrightarrow{+1},
\end{align}
with \[\mcal{N}_{m}^{(1)}\cong \left[\omega^{(1-k-m,-m)}_{\Fl}-i_{*}i^{\dagger}\omega^{(1-k-m,-m)}_{\Fl}[-1]\right]\]
and \(\mcal{N}_{m}^{(2)}\cong i_{*}i^{\dagger}\omega^{(1-k-m,-m)}_{\Fl}[-1].\)
\begin{prop}\label{propCalculateg-cohomo}
        We have $\GL_{2}(\QQ_{p})$-equivariant isomorphisms 
        \begin{align}\label{alignCalculateg-cohFl}
                \RHom_{\fg}(\Sym^{k-1}V\otimes\det{}^{m},C^{\la})\cong 
                \mcal{N}^{(0)}_{k+m}\oplus \mcal{N}^{(0)}_{m},\\
                \mcal{N}^{(0)}_{k+m}\cong \omega^{(1-m,-k-m)}_{\Fl}[-1],\mcal{N}^{(0)}_{m}\cong \omega^{(1-k-m,-m)}_{\Fl},
        \end{align} with $\theta_{\fh}(1,0)$ acting on $\mcal{N}^{(0)}_{i}$ by the scalar $-i$ ($i=m,m+k$).

        Moreover, the sequences (\ref{alignExactCohbtoCohg1})
        and (\ref{alignExactCohbtoCohg2})
        coincide with the standard sequences (up to a scalar in $\CC_{p}^{\times}$)
        \begin{align}
                \label{alignExact1TranslateFl}
                \omega^{(1-m,-k-m)}_{\Fl}[-1]\to i_{*}i^{\dagger}\omega^{(1-m,-k-m)}_{\Fl}[-1]\xrightarrow{\mrm{Cou}} j_{!}j^{*}\omega^{(1-m,-k-m)}_{\Fl}\xrightarrow{+1},
        \end{align}
        and \begin{align}
                \label{alignExact2TranslateFl}
                \omega^{(1-k-m,-m)}_{\Fl}\to \left[\omega^{(1-k-m,-m)}_{\Fl}-i_{*}i^{\dagger}\omega^{(1-k-m,-m)}_{\Fl}[-1]\right]\\\to i_{*}i^{\dagger}\omega^{(1-k-m,-m)}_{\Fl}[-1]\xrightarrow{+1}.
        \end{align}
\end{prop}
\begin{proof}
When restricted to the open Bruhat strata $U_{w}\cong \mA^{1}\subset \Fl$, $\mcal{N}_{k+m}^{(1)}|_{\mA^{1}}\cong \mcal{N}_{m}^{(2)}|_{\mA^{1}}\cong 0$,
and thus 
\begin{align*}
\RHom_{\fg}(\Sym^{k-1}V\otimes\det{}^{m},C^{\la})^{\theta_{\fh}(1,0)=-k-m}|_{\mA^{1}}\cong \mcal{N}^{(2)}_{k+m}|_{\mA^{1}}[-1]\\\cong \omega^{(1-m,-k-m)}_{\Fl}|_{\mA^{1}}[-1]
\end{align*}
and \[\RHom_{\fg}(\Sym^{k-1}V\otimes\det{}^{m},C^{\la})^{\theta_{\fh}(1,0)=-m}|_{\mA^{1}}\cong \mcal{N}_{m}^{(1)}|_{\mA^{1}}\cong \omega^{(1-k-m,-m)}_{\Fl}|_{\mA^{1}}.\]
In particular, both are locally free of rank $1$ over $\mO_{\Fl}$ when restricted to $\mA^{1}$ (up to shift). 

As the action of $\fg$ on $\Sym^{k-1}V\otimes\det{}^{m}$ can be upgraded to an action of $\GL_{2}(\QQ_{p})$, 
$\RHom_{\fg}(\Sym^{k-1}V\otimes\det{}^{m},C^{\la})$ is $\GL_{2}(\QQ_{p})$-equivariant.

Thus we know \(\RHom_{\fg}(\Sym^{k-1}V\otimes\det{}^{m},C^{\la})^{\theta_{\fh}(1,0)=-m}\) and \(\RHom_{\fg}(\Sym^{k-1}V\otimes\det{}^{m},C^{\la})^{\theta_{\fh}(1,0)=-k-m}[1]\) are $\GL_{2}(\QQ_{p})$-equivariant line bundles over $\Fl$ (up to shift). 

By the classification of $(\fg,\GL_{2}(\QQ_{p}))$-equivariant line bundles on $\Fl$ in  \cite[Lemme 2.16, Proposition 2.18]{Pilloni22}, 
we know that \begin{align*}
\RHom_{\fg}(\Sym^{k-1}V\otimes\det{}^{m},C^{\la})^{\theta_{\fh}(1,0)=-k-m}\cong \omega^{(1-m,-k-m)}_{\Fl}[-1],
\\\RHom_{\fg}(\Sym^{k-1}V\otimes\det{}^{m},C^{\la})^{\theta_{\fh}(1,0)=-m}\cong \omega^{(1-k-m,-m)}_{\Fl}.
\end{align*}
This finishes the proof of (\ref{alignCalculateg-cohFl}).

        This implies in particular that the map \(\mcal{N}^{(0)}_{k+m}\to \mcal{N}_{k+m}^{(1)}\)
        is non-zero, and then by Lemma \ref{lemAnyMapIsIsoInQCohg} below,
        it coincides with the standard map \[\omega^{(1-m,-k-m)}_{\Fl}\to i_{*}i^{\dagger}\omega^{(1-m,-k-m)}_{\Fl}[-1].\]
        This gives the proof of (\ref{alignExact1TranslateFl}).
        For (\ref{alignExact2TranslateFl}),
        we note that $\mcal{N}_{m}^{(1)}\to \mcal{N}^{(2)}_{m}$ is non-zero, and thus it factors through $i_{*}i^{\dagger}\omega^{(1-k-m,-m)}_{\Fl}[-1]\to i_{*}i^{\dagger}\omega^{(1-k-m,-m)}_{\Fl}[-1]$ by the natural t-structure (Remark \ref{rmkNaturalt-str}),
        which is forced to be the scalar multiplication again by Lemma \ref{lemAnyMapIsIsoInQCohg}.
\end{proof}

The following lemma is the ``counterpart" of Lemma \ref{lemNoBinv} over $\Fl$.
\begin{lem}\label{lemAnyMapIsIsoInQCohg}
(1) For any $\chi,\chi'\in\ZZ^{\oplus2}$, if $f:i^{\dagger}\omega^{\chi'}_{\Fl}\to i^{\dagger}\omega^{\chi}_{\Fl}$ is $B(\QQ_{p})$-equivariant, then either $f=0$, or $\chi=\chi'$, and $f$ is multiplication by a scalar in $\CC_{p}$.

(2) For any $\chi\in\ZZ^{\oplus2}$, any open $U\subset \Fl$, and for any $f:\omega^{\chi}_{\Fl}|_{U}\to \omega^{\chi}_{\Fl}|_{U}$ in $\QCoh_{\fg}(U)$, if $f\ne 0$, then $f$ is an isomorphism. The same holds for $f:i^{\dagger}\omega^{\chi}_{\Fl}\to i^{\dagger}\omega^{\chi}_{\Fl}$ in $\QCoh_{\fg}(\infty^{\dagger})$.
\end{lem}
\begin{proof}
(1) Assume $f\ne0$.  Then $f$ corresponds to a unique non-zero section $f\in H^{0}(\infty,i^{\dagger}\omega^{\chi-\chi'}_{\Fl})$ that is $B(\QQ_{p})$-invariant. Then by definition, $f$ can be extended to a neighborhood $\infty\in U\subset \Fl$ where $f$ is nowhere vanishing. Note that such extension has to be unique since the transition map \( H^{0}(U,\omega^{\chi-\chi'}_{\Fl})\to H^{0}(U',\omega^{\chi-\chi'}_{\Fl}) \) is injective for $U'\subset U$. Using $B(\QQ_{p})$-action, we have $B(\QQ_{p})\cdot U=\Fl$, so $f$ can be extended to a nowhere vanishing $B(\QQ_{p})$-invariant global section \( H^{0}(\Fl,\omega_{\Fl}^{\chi-\chi'}). \)
This implies that $\omega_{\Fl}^{\chi-\chi'}$ is isomorphic $B(\QQ_{p})$-equivariantly to $\mO_{\Fl}$, and thus $\chi-\chi'=0$ and $\chi=\chi'$. By GAGA, we know in addition that $f\in\CC_{p}$.

(2) By twisting, we can assume that $\chi=(0,0)$, and then $f$ corresponds to a nonzero section $s\in H^{0}(U,\mO_{\Fl})$. Moreover, since $f$ is $\fg$-equivariant, we know $\fg\cdot s=0$. This implies that $s$ is constant. Thus, $f$ is an isomorphism.
\end{proof}

We can then apply \(\VB(-)\) to Proposition \ref{propCalculateg-cohomo}.

\begin{prop}\label{propCalculateg-cohomoOnSV} For $k\in\ZZ_{\ge1},m\in \ZZ$,
        we have a $\GL_{2}(\QQ_{p})\times \TT(K^{p})$-equivariant isomorphisms 
        \begin{align}\label{alignCalculateg-coh}
                \RHom_{\fg}(\Sym^{k-1}V\otimes\det{}^{m},\mO^{\la})\cong 
                \mcal{N}^{(0)}_{k+m}(-k-m)\oplus \mcal{N}^{(0)}_{m}(-m),\\
                \mcal{N}^{(0)}_{k+m}\cong \omega^{(1-m,-k-m),\sm}[-1],\mcal{N}^{(0)}_{m}\cong \omega^{(1-k-m,-m),\sm}.
        \end{align}

        Moreover, we have a
        fiber sequence \begin{align}\label{alignSeqExactOnSVbtog}
        \RHom_{\fg}(\Sym^{k-1}V\otimes\det{}^{m},\mO^{\la})\to \RHom_{\fb}((k-1+m,m),\mO^{\la})\\
        \to \RHom_{\fb}((-1+m,k+m),\mO^{\la})\xrightarrow{+1}.
\end{align}
        and the weight $(k+m)$-part and the weight $m$-part coincide respectively with the standard sequences (up to a scalar in $\CC_{p}^{\times}$)
        \begin{align}
                \label{alignExact1Translate}
                \omega^{(1-m,-k-m),\sm}[-1]\to i_{*}\omega^{(1-m,-k-m),\sm}[-1]\xrightarrow{\mrm{Cou}} j_{!}\omega^{(1-m,-k-m),\sm}\xrightarrow{+1},
        \end{align}
        and \begin{align}
                \label{alignExact2Translate}
                \omega^{(1-k-m,-m ),\sm}\to \left[\omega^{(1-k-m,-m ),\sm}-i_{*}\omega^{(1-k-m,-m ),\sm}[-1]\right]\\\to i_{*}\omega^{(1-k-m,-m ),\sm}[-1]\xrightarrow{+1}.
        \end{align}
      \end{prop}
\begin{proof}
        This follows from Proposition \ref{propCalculateg-cohomo} by applying the functor $\VB$ and using Lemma \ref{rmkDeduceSVfromFl}.
\end{proof}      

\begin{cor}\label{cork<0FollowsFromk>0}
Theorem \ref{thmFontaine=Theta} (2) follows from Theorem \ref{thmFontaine=Theta} (1), and vice versa.
\end{cor}
\begin{proof}
By Remark \ref{rmkTwistInBdR}, it suffices to consider \((a,b)=(-1,k)\) or \((k-1,0)\) for \(k\in\ZZ_{\ge 1}\).
Apply \(\RHom_{\fg}(-,\BdR^{\la})\) to (\ref{alignBGGComp}), and
we have a morphism  \[
        \bar{\fn}^{k}:     \hat{\mE}_{(k-1,0)}(\BdR^{+,\la})\to \hat{\mE}_{(-1,k)}(\BdR^{+,\la})\otimes (\bar{\fn}^{\vee})^{\otimes k},
            \] which is compatible with the arithmetic Sen operator $\Theta$.

            By Theorem \ref{mainthmPanPilloni},
            let us denote  \begin{align*}
                    \RHom_{\fb}&((k-1,0),\mO^{\la})\cong \mcal{N}_{k}^{(1)}\oplus \mcal{N}_{0}^{(1)}(-k),\\
                    \mcal{N}_{k}^{(1)}&\cong i_{*}\omega^{(1,-k),\sm}[-1],\mcal{N}_{0}^{(1)}\cong [\omega^{(1-k,0),\sm}-i_{*}\omega^{(1-k,0),\sm}[-1]] ,\\
            \RHom_{\fb}&((-1,k),\mO^{\la})\cong \mcal{N}_{k}^{(2)}\oplus \mcal{N}_{0}^{(2)}(-k),\\\mcal{N}_{k}^{(2)}&\cong j_{!}\omega^{(1,-k),\sm}[-1],\mcal{N}_{0}^{(2)}\cong i_{*}\omega^{(1-k,0),\sm}[-1].   
            \end{align*}
            Hence, we obtain the following commutative diagram (in the category of perverse sheaves) with the horizontal morphisms given by the geometric Fontaine operators (in Corollary \ref{corGeomFontaine}) 
                \begin{equation}\label{equaBasicRelk>0andk<0}
                \begin{tikzcd}
 \mcal{N}^{(1)}_{0}\arrow[d,two heads]\arrow[r,"N^{k,(1)}"] & \mcal{N}^{(1)}_{k}\arrow[d,"\mrm{Cou}",hook]\\
 \mcal{N}^{(2)}_{0}\arrow[r,"N^{k,(2)}"]\arrow[ur,dotted,"\theta^{k}"] & \mcal{N}^{(2)}_{k}                
                 \end{tikzcd} .
                \end{equation}

                Given Theorem \ref{thmFontaine=Theta} (1), we have a unique factorization of \(N^{k,(1)}:\mcal{N}_{0}^{(1)}\to\mcal{N}_{k}^{(1)}\)
                as 
                \[ \mcal{N}^{(1)}_{0}\twoheadrightarrow 
                        \mcal{N}_{0}^{(2)}\cong i_{*}\omega^{(1-k,0),\sm}[-1]\xrightarrow{\theta^{k}} i_{*}\omega^{(1,-k),\sm}[-1]\cong \mcal{N}_{k}^{(1)}. 
                        \]
                
                Since \(\mcal{N}^{(1)}_{0}\twoheadrightarrow 
                        \mcal{N}_{0}^{(2)}\)
                        is perverse surjective, the lower triangle is forced to be commutative, which is precisely the statement of Theorem \ref{thmFontaine=Theta} (2). 

                The inverse direction is similar, where we use (\ref{equaBasicRelk>0andk<0}) and the fact that \(\mrm{Cou}\) is perverse injective.
\end{proof}

\section{Proof of the main theorem}\label{sectionPfofMainThmdeRhamSheaves}

We will finish the proof of Theorem \ref{thmFontaine=Theta}. By Remark \ref{rmkTwistInBdR} and Corollary \ref{cork<0FollowsFromk>0}, it suffices to consider \((a,b)=(k-1,0)\) for \(k\in \ZZ_{\ge1}\). We remark that a simpler proof will appear in \cite{Jiang2025HMF}. We will use the natation of de Rham period sheaves from Definition \ref{dfnShvBdRonFl}.

\subsection{Strategy of the proof}\label{subsectionStrategy}
The proof is a bit technical.
We will start with a sketch of the proof when $(a,b)=(0,0)$ to illustrate the idea. The plan of this section will be given at the end of this subsection.

When $(a,b)=(0,0)$, the theta operator $\theta^{1}$ is the (log) connection $\nabla_{\mrm{GM}}:i_{*}\mO^{\sm}\to i_{*}\Omega_{\log}^{1,\sm}$. For the Fontaine operator, we are looking at the action of \(\Theta\) on  $$0\to \mcal{N}_{1}\to \hat{\mE}_{(0,0)}(\BdR^{+,\la})\to \mcal{N}_{0}\to 0,$$
and we want to understand the complex $[\mcal{N}_{0}\xrightarrow{N^{1}}\mcal{N}_{1}]$. Note that these maps are not expected to be $\mO^{\sm}$-linear. As in \cite{Pan2209.06II}, 
we linearize the situation by resolving $\BdR^{+}$ by $\OBdR^{+}$. We consider the resolution in \cite{Scholze13}                \[0\to
                \BdR^{+,\la}\to \OBdR^{+,\la}\to \OBdR^{+,\la}\otimes\Omega_{\log}^{1,\sm}\to 0 .
                \] 
The complex is compatible with  the ``$t$-adic filtration", which we denote as \[\Fil^{t}_{i}\OBdR^{+,\la}:=t^{-i}(\OBdR^{+,\la}).\]

We can take \(\hat{\mE}_{(0,0)}(-)\) of the sequence. We will see that that the action of \(\Theta\) on \(\hat{\mE}_{(0,0)}(\gr^{t}_{i}\OBdR^{+,\la})\) is zero (Lemma \ref{lemOlaDecompoEasy}). Therefore, \(\Theta\)-action on \(\hat{\mE}_{(0,0)}(\OBdR^{+,\la})\)
also induces a monodromy operator \[\ti{N}^{1}:\hat{\mE}_{(0,0)}(\gr^{t}_{0}\OBdR^{+,\la})\to \hat{\mE}_{(0,0)}(\gr^{t}_{-1}\OBdR^{+,\la}).
\]
As a result,
we have
 the following commutative diagram:                 \[
\begin{tikzcd}
 \mcal{N}_{0}
 \arrow[r,"\iota"]
 \arrow[d,"N^{1}"]
 & \hat{\mE}_{(0,0)}(\gr^{t}_{0}(\OBdR^{+,\la})) \arrow[r,"\nabla"]\arrow[d,"\ti{N}^{1}"]
 & \hat{\mE}_{(0,0)}(\gr^{t}_{0}(\OBdR^{+,\la}))\otimes\Omega_{\log}^{1,\sm}\arrow[d,"\ti{N}^{1}\otimes 1"]\\
 \mcal{N}_{1}
 \arrow[r,"\iota"] &
 \hat{\mE}_{(0,0)}(\gr^{t}_{-1}(\OBdR^{+,\la}))\arrow[r] & \hat{\mE}_{(0,0)}(\gr^{t}_{-1}(\OBdR^{+,\la}))\otimes\Omega^{1,\sm}_{\log},
 \end{tikzcd}                            \] 
where each row is a short exact sequence of perverse sheaves. 

Moreover, we can endow \(\gr^{t}_{i}(\OBdR^{+,\la})\) with the Hodge filtration induced from \(\OBdR^{+,\la}\), and then the diagram is compatible with the filtrations.

By Theorem \ref{mainthmPanPilloni}, the objects on the second row are concentrated in degree \(1\) for the natural \(t\)-structure, while those on the first row are in degree \([0,1]\). So it suffices to understand 
\[
\begin{tikzcd}
        & 
        \Fib(H^{1}(\ti{N}^{1})) \arrow[r,"\nabla"]\arrow[d]
        & \Fib(H^{1}(\ti{N}^{1}))\otimes \Omega^{1,\sm}_{\log}\arrow[d]
\\
 H^{1}(\mcal{N}_{0})
 \arrow[r,"\iota"]
 \arrow[d,"H^{1}(N^{1})"]
 & \hat{\mE}_{(0,0)}(\gr^{t}_{0}(\OBdR^{+,\la})) \arrow[r,"\nabla"]\arrow[d,"H^{1}(\ti{N}^{1})"]
 & \hat{\mE}_{(0,0)}(\gr^{t}_{0}(\OBdR^{+,\la}))\otimes\Omega_{\log}^{1,\sm}\arrow[d,"H^{1}(\ti{N}^{1})\otimes 1"]\\
 H^{1}(\mcal{N}_{1})
 \arrow[r,"\iota"] &
 \hat{\mE}_{(0,0)}(\gr^{t}_{-1}(\OBdR^{+,\la}))\arrow[r] & \hat{\mE}_{(0,0)}(\gr^{t}_{-1}(\OBdR^{+,\la}))\otimes\Omega^{1,\sm}_{\log}.
 \end{tikzcd}                    \] 


We want to understand the filtered morphism \[H^{1}(\ti{N}^{1}):\hat{\mE}_{(0,0)}(\gr^{t}_{0}(\OBdR^{+,\la}))\to \hat{\mE}_{(0,0)}(\gr^{t}_{-1}(\OBdR^{+,\la})),
\] so we can start by considering its graded pieces, which (up to twists) are symmetric powers of Faltings's extensions. These actually lie in the image of the functor \(\VB\), so we can calculate it explicitly over the flag variety. As it turns out, \(H^{1}(\ti{N}^{1})\) induces an isomorphism on \(\gr^{i}\) for any \(i\ne 0\) (Proposition \ref{propFaltingsExt}), and \(\Fib(H^{1}(\ti{N}^{1}))\cong i_{*}\mO^{\sm}\), equipped with the trivial filtration. 

Moreover, the connection \(\nabla\) on \(\OBdR\)
induces a connection \(\nabla:\mrm{Fib}(H^{1}(\ti{N}^{1}))\cong i_{*}\mO^{\sm}\to i_{*}\Omega^{1,\sm}\). We claim that it has to coincide with \(\nabla_{\mrm{GM}}\). This is because 
$\nabla-\nabla_{\mrm{GM}}$ is a $B(\QQ_{p})$-equivariant $\mO^{\sm}$-linear morphism $i_{*}\mO^{\sm}\to i_{*}\Omega^{1,\sm}_{\log}$, which corresponds to a $B(\QQ_{p})$-invariant weight 2 overconvergent modular forms. Any such form 
has to be zero by Lemma \ref{lemNoBinv}.
This shows that $\nabla=\nabla_{\mrm{GM}}$! 

Using the diagram above, we have an isomorphism of \emph{filtered} complex \[[i_{*}\mO^{\sm}\xrightarrow{H^{1}(N^{1})}i_{*}\Omega^{1,\sm}_{\log}]\cong [i_{*}\mO^{\sm}\xrightarrow{\nabla_{\mrm{GM}}}i_{*}\Omega^{1,\sm}_{\log}],
\] where both sides are equipped with the filtration from the stupid truncation. This implies that \(H^{1}(N^{1})\cong \nabla_{\mrm{GM}}\). 

Most of the argument above generalizes to \((a,b)=(k-1,0)\) for general \(k\in\ZZ_{\ge 1}\).
In Subsection \ref{subsecTadicFiltration}, we will introduce the \(t\)-adic filtration \(\Fil^{t}_{*}\). The rest of the section mainly focus on proving Proposition \ref{propNabla=GM}, that is, \[H^{1}(\Fib(\ti{N}^{k}))\cong i_{*}\Sym^{k-1}D^{\sm}.
\]

In Subsection \ref{subsectionComputedeRham}, we will compute the derived locally algebraic vectors in \(\OBdR\), where the main result is Proposition \ref{propH0OBdR}, which states that \[H^{0}(\Fib(\ti{N}^{k})/\Fil^{k})\cong \Sym^{k-1}D^{\sm}.
\] 

In Subsection \ref{subsecFaltingsExt}, we will compute the Sen operator for Faltings's extension, which shows that \[H^{1}(\Fib(\ti{N}^{k}))\cong H^{1}(\Fib(\ti{N}^{k})/\Fil^{k}).
\]

Finally, we finish the proof of Theorem \ref{thmFontaine=Theta} in Subsection \ref{subsectionGeneral(a,b)a-bge0}. The most non-trivial part of the proof is Proposition \ref{propNabla=GM}. The key is to consider the action of \(\hat{\mE}_{(0,0)}(\OBdR^{\la})\)
on \(\hat{\mE}_{(k-1,0)}(\OBdR^{\la})\), which induces a map \[
H^{1}(\Fib(\ti{N}^{1}))\otimes H^{0}(\Fil(\ti{N}^{k})/\Fil^{k})\to H^{1}(\Fil(\ti{N}^{k})/\Fil^{k}),
\] which one can verify to be an isomorphism.

\begin{rmk}[Comparison with \cite{Pan2209.06II}]\label{rmkComparisonPan}
For people familiar with \cite{Pan2209.06II}, 
our proof is parallel to that of \cite{Pan2209.06II}. In fact, given Theorem 1.2.10 of \cite{Pan2209.06II}, we obtain Theorem \ref{thmFontaine=ThetaIntro} by applying $\mE_{(k-1,0)}$, with $\mE_{(k-1,0)}(d^{k})=\theta^{k}$ and $\mE_{(k-1,0)}(\bar{d}^{k})$ becomes the natural projection $\mcal{N}_{0}\to M^{\dagger}_{1-k}$.  The key simplification in our proof is that after taking $\fb$-cohomology, $\mE_{(k-1,0)}(\bar{d}^{k})$ becomes much simpler, and induces an isomorphism in \(H^{1}\). In \cite{Pan2209.06II}, Pan achieves the same goal using a non-canonical construction of a lifting $\mO^{\la,(0,0)}\to \OBdR^{\la}$. Taking $\fb$-cohomology also simplifies the computation of cohomology, as well as the proof of the classicality. Our proof generalizes naturally to overconvergent Hilbert modular forms.
\end{rmk}

\subsection{\(t\)-adic filtration}\label{subsecTadicFiltration}
For later usage, we introduce the following $t$-adic (ascending) filtration:
\begin{dfn}[\(t\)-adic filtration]\label{dfntadicFil}
We will write \(\Fil^{*}\) for the descending Hodge filtration on \(\OBdR\) or the filtrations induced on \(\OBdR^{+}\) or \(\OBdRR{\ell}\) etc.

We define another ascending $t$-adic filtration on \(\OBdR\) via \[ \Fil_{-i}^{t}(\OBdR):=t^{i}\OBdR^{+}. \]
We denote the corresponding graded pieces as         \( \gr^{t}_{*}\OBdR. \)
We also use the notation $\Fil^{t}_{*}$ and $\gr^{t}_{*}$ for the induced filtrations on \(\OBdR^{\la},\;\OBdR^{+,\la},\;\OBdRR{\ell},\;\gr^{i}(\OBdR^{+})\) or
$\mE_{(a,b)}(\OBdRR{\ell+1})$ for $(a,b)\in\ZZ^{\oplus 2}$ etc. 


Concretely, for \(i\in\ZZ\), we equip \(\Fil^{t}_{i}(\OBdR)\)
with the Hodge filtration and equip \(\Fil^{n}(\OBdR)\) with the induced filtration by putting \[\Fil^{t}_{i}\Fil^{n}(\OBdR):=\Fil^{n}\Fil^{t}_{i}(\OBdR):=\Fil^{n}(\OBdR)\cap \Fil^{t}_{i}(\OBdR),\]
equip \(\gr^{t}_{i}(\OBdR)\)
with the Hodge filtration by putting \[\Fil^{n}\gr^{t}_{i}(\OBdR):=\Fil^{n}\Fil^{t}_{i}(\OBdR)/\Fil^{n}\Fil^{t}_{i-1}(\OBdR),\]
and equip \(\gr^{n}(\OBdR)\) with the induced filtration \[\Fil^{t}_{i}\gr^{n}(\OBdR):=\Fil^{t}_{i}\Fil^{n}(\OBdR)/\Fil^{t}_{i}\Fil^{n+1}(\OBdR).
\]
\end{dfn}
\begin{eg}
By definition $\gr^{t}_{0}(\OBdR^{+,\la})=\OBdR^{+,\la}/(t)$, and has a descending filtration induced by the Hodge filtration, where the non-trivial graded pieces are $\gr^{t}_{0}\gr^{j}\OBdR^{+,\la}\cong (\Omega^{1,\la})^{\otimes j}$ for $j\in \NN$.
\end{eg}

The \(t\)-adic graded pieces have the property that the action of \(\Theta\) is semisimple after taking \(\fb\)-cohomology in the case of regular weights. 

\begin{lem} \label{lemOlaDecompoEasy}
Let \(i\in \ZZ\).
The Hodge filtration of \(\OBdR\) induces a filtration \(Fil^{*}\) on \(\gr^{t}_{i}(\OBdR^{\la})\) where the non-trivial graded pieces are \[\gr^{j}(\gr^{t}_{i}(\OBdR^{\la}))\cong (\Omega^{1,\la}_{\log})^{\otimes_{\mO^{\la}}(j-i)}(-i)
\] for \(j\in \ZZ_{\ge i}\). 
For \((a,b)\in\ZZ^{\oplus 2},a\ne b+1
\), the action of $\Theta$ on $\hat{\mE}_{(a,b)}(\gr_{i}^{t}\OBdR^{\la})$ is zero. 

Moreover, for \(i\notin\{-b,-1-a\}\), 
 \(\gr_{i}^{t}(\hat{\mE}_{(a,b)}(\OBdR^{\la})):=\hat{\mE}_{(a,b)}(\gr_{i}^{t}\OBdR^{\la})\cong 0\), and for \(i\in \{-b,-1-a\}\), 
\(\gr_{i}^{t}(\hat{\mE}_{(a,b)}(\OBdR^{\la}))\) has a descending filtration, where the non-trivial graded pieces are \[\gr^{j}(\gr^{t}_{i}(\hat{\mE}_{(a,b)}(\OBdR^{\la})))\cong \mcal{N}_{-i}\otimes_{\mO^{\sm}}(\Omega^{1,\sm}_{\log})^{\otimes_{\mO^{\sm}}(j-i)},\;j\in\ZZ_{\ge i}.
\] where \(\mcal{N}_{-i}\) are as in Theorem \ref{mainthmPanPilloni}.
\end{lem}

\begin{proof}
Multiplying by $t$ induces an isomorphism         \[ \gr^{t}_{i+1}(\OBdRR{\ell+1})\cong \gr^{t}_{i}(\OBdRR{\ell+2})(-i). \] In particular, it suffices to prove the claim for $i=0$. We proceed by induction on $\ell$. The case when $\ell=0$ is obvious. Given the case for $\ell\in\NN$, we consider the short exact sequence         \( 0\to \BdR^{+,\la}\to \OBdRR{\ell+1}\to \OBdRR{\ell}\otimes_{\mO^{\sm}}\Omega_{\log}^{1,\sm}\to 0 \). Taking $\gr^{t}_{0}$ gives         \( 0\to \mO^{\la}\to \gr^{t}_{0}(\OBdRR{\ell+1})\to \gr^{t}_{0}(\OBdRR{\ell})\otimes_{\mO^{\sm}}\Omega_{\log}^{1,\sm}\to 0. \) On the other hand, the projection $\OBdRR{\ell+1}\twoheadrightarrow \OBdRR{1}\cong \mO^{\la}$ induces a natural splitting of the sequence above, and 
thus we have an \(\mO^{\la}\)-linear isomorphism  \[
\gr_{0}^{t}(\OBdRR{\ell+1})\cong \mO^{\la}\oplus (\gr_{0}^{t}(\OBdRR{\ell})\otimes_{\mO^{\sm}}\Omega^{1,\sm}_{\log})      .
        \]  We are done by induction. Note that the action of $\Theta$ on $\Omega^{1,\sm}_{\log}$ is zero.
The last part follows from
Theorem \ref{mainthmPanPilloni} as
\( \mE_{(a,b)}((\Omega_{\log}^{1,\la})^{\otimes j}(-i))\cong (\Omega_{\log}^{1,\sm})^{\otimes j}\otimes_{\mO^{\sm}}\mE_{(a,b)}(\mO^{\la}(-i))\cong 0 \).
\end{proof}
\begin{rmk}\label{rmkOsmLinearDistinguish}
We remark that there is a bit of subtlety in the term ``$\mO^{\la}$-linear" here. On $\gr^{t}_{\bullet}(\OBdR^{+,\la}/\Fil^{\ell+1})$, there are the action of $\mO^{\sm}$ coming from the embedding $\mO^{\sm}\hookrightarrow \OBdR$, and that of $\mO^{\la}=\BdR^{+,\la}/t$. In particular, we have two \emph{different} actions of $\mO^{\sm}$, although they coincide on the graded pieces. In what follows, when we say ``$\mO^{\sm}$-linear",
we refer to the former from the embedding $\mO^{\sm}\hookrightarrow\OBdR$, and when we say ``$\mO^{\la}$-linear", we refer to the latter. 

On the other hand, the action of $\mO^{\sm}$ and $\mO^{\la}$ are compatible on $\gr^{j}\OBdR^{+,\la}$, essentially because they coincide in $\gr^{0}\OBdR\cong\hat{\mO}$. As a result, their actions are also compatible on     \( \gr^{t}_{i}\gr^{j}\OBdR^{+,\la}. \)
\end{rmk}

\subsection{Locally algebraic vectors in \(\OBdR\)}\label{subsectionComputedeRham}
Using Theorem \ref{thmMainThmGeomSenVB}, it is easy to see that \(\hat{\mO}^{R-\sm}\cong \mO^{\sm}\oplus \Omega^{1,\sm}_{\log}[-1]\). Interestingly, \(\OBdR^{R-\sm}\) is concentrated in degree \(0\). The result of this section generalizes easily to general Shimura varieties.

\begin{notation}[{\cite[Definition 2.2.10 (3)]{DLLZ2022logarithmicJAMS}}]
Let  \(\OC:=\gr^{0}\OBdR\), and denote by \(\OC^{\la}\) its subsheaf of locally analytic vectors. Note that \(\OC^{\la}\cong \OC^{R-\la}\) by Proposition \ref{propORlaConcentrat}. 
\end{notation}

\begin{prop}\label{propOCRsm}
We have a canonical isomorphism \[\OC^{R-\sm}\cong \RHom_{\fg}(1,\OC^{\la})\cong \mO^{\sm}.
\]       
\end{prop} 
\begin{rmk}
Although in a different language and through different proof, this proposition is essentially a reformulation of \cite[Proposition 6.16]{Scholze12} or \cite[Lemma 3.3.2]{DLLZ2022logarithmicJAMS}.
\end{rmk}
\begin{proof}
By \cite[Theorem 4.2.2]{Pan22} (or \cite[Theorem 5.1.4]{Juan2022.09locallyShi} for the general Shimura varieties), we know that \(\OC\cong \pi_{\HT,*}\circ \pi_{\HT}^{*}(\underline{O(N)})\), where \(\underline{O(N)}\) is a colimit of algebraic \(B=TN\)-representations, where \(N\) acts by right multiplication on \(N\), and \(T\) acts by conjugation, and \(\underline{O(N)}\) is the colimit of the associated \(\GL_{2}\)-equivariant vector bundles via localization. 
Then \[\OC^{\la}\cong \OC^{R-\la}\cong (\pi_{\HT,*}\circ \pi_{\HT}^{*}(\underline{O(N)})\hatotimes \mO_{G,1})^{R-\sm}\cong \VBn(\underline{O(N)}\hatotimes \mO_{G,1}),
\] where \(\VBn\) is as in Definition \ref{dfnVBNaive}. Here \(\underline{O(N)}\) is a filtered colimit of vector bundles, so \(\pi_{\HT}^{*}=L\pi_{\HT}^{*}\), and \(R\pi_{\HT,*}=\pi_{\HT,*}\) by Lemma \ref{lemPushForwardVanish}. Now \[\OC^{R-\sm}\cong \VBn(\RHom_{\fg}(1,(\underline{O(N)}\hatotimes \mO_{G,1},*_{2})))\cong \VBn(\underline{O(N)}).
\] where \(*_{2}\) denotes the action of \(\fg\) on \(\mO_{G,1}\) by right multipication, and thus \[\RHom_{\fg}(1,\mO_{G,1})\cong \CC_{p}.\] Now by Lemma \ref{lemVBnToVB} and Theorem \ref{thmMainThmGeomSenVB},
we have \[\VBn(\underline{O(N)})\cong \VB(R\Gamma(\fn^{0},\underline{O(N)}))\cong \VB(\mO_{\Fl})\cong \mO^{\sm},
\] as desired.
\end{proof}
\begin{cor}\label{corOBdRRsm}
We have a canonical isomorphism \[\hat{E}_{0}(\RHom_{\fg}(1,\OBdR^{\la}))\cong \mO^{\sm},
\]   where \(\hat{E}_{0}\) is as in Definition \ref{dfnBdRSemiLinear}.     
\end{cor} 
\begin{proof}
By Proposition \ref{propOCRsm},
the Hodge filtration on \(\OBdR^{\la}\)
induces a filtration on \(\RHom_{\fg}(1,\OBdR^{\la})\), such that \(\gr^{i}\cong \mO^{\sm}(i)\),
and thus \(E_{0}(\gr^{i})\cong 0\) unless \(i=0\), as desired.
\end{proof}
\begin{cor}\label{corOBdRLAlg}
Let \(W\) be an algeraic representation of \(G\), and let \(D_{\dR,\Kpp}(W)\) be the associated 
vector bundle with an integrable connection defined over \(\mX_{\Kpp}\). 
Denote \(D_{\dR,K^{p}}(W)^{\sm}:=\pi_{\HT,*}(\varinjlim_{K_{p}}\pi_{K_{p}^{-1}}D_{\dR,\Kpp}(W))\).
Then we have a canonical filtered isomorphism \[
\hat{E}_{0}(\RHom_{\fg}(W^{\vee},\OBdR^{\la}))\cong 
 D_{\dR,K^{p}}(W)^{\sm}
\] that is compatible with connections.
\end{cor}
\begin{proof}
By the (logarithm) Riemann-Hilbert correspondence (\cite[Theorem 1.5]{DLLZ2022logarithmicJAMS}), we have a natural filtered isomorphism that is compatible with the Gauss-Manin connections  \begin{align}\label{alignRHCor}
        D_{\dR,K^{p}}(W)^{\sm}\otimes_{\mO^{\sm}}\OBdR\cong W\otimes_{\QQ_{p}}\OBdR ,
\end{align} and thus \[D_{\dR,K^{p}}(W)^{\sm}\otimes_{\mO^{\sm}}\OBdR^{\la}\cong W\otimes_{\QQ_{p}}\OBdR^{\la}.
\] We can take \(\hat{E}_{0}(\RHom_{\fg}(1,-))\) on both sides. The LHS becomes \(D_{\dR,K^{p}}(W)^{\sm}\) by Corollary \ref{corOBdRRsm}, and the RHS becomes \(\RHom_{\fg}(W^{\vee},\OBdR^{\la})
\), as desired. 
\end{proof}
\begin{eg}
In the case of modular curves, for \(W=\Sym^{k-1}V^{\vee}\), \(D_{\dR,K^{p}}(W)^{\sm}=\Sym^{k-1}D^{\sm}\). 
\end{eg}
\begin{cor}\label{propH0OBdR}
Let \(k\in\ZZ_{\ge 1}\), and consider \[\RHom_{\fg}(\Sym^{k-1}V,\OBdR^{\la})\to \RHom_{\fb}((k-1,0),\OBdR^{\la})
\] as is induced by (\ref{alignBGGComp}). Then it induces a filtered morphism \[\Sym^{k-1}D^{\sm}\cong \hat{E}_{0}(\RHom_{\fg}(\Sym^{k-1}V,\OBdR^{\la}))\to \hat{\mE}_{(k-1,0)}
(\OBdR^{+,\la})
\] that is compatible with connections, and induces an isomorphism \[\Sym^{k-1}D^{\sm}\cong H^{0}(\hat{\mE}_{(k-1,0)}(\gr_{0}^{t}(\OBdR/\Fil^{k}))).
\]
\end{cor}
\begin{proof}
Only the last isomorphism requires some explication. By (\ref{alignBGGComp}), we have a fiber sequence of filtered sheaves \[\hat{E}_{0}(
\RHom_{\fg}(\Sym^{k-1}V,\OBdR^{\la}))\to \hat{\mE}_{(k-1,0)}(\OBdR^{\la})\to \hat{\mE}_{(-1,k)}(\OBdR^{\la}).
\] 
By Lemma \ref{lemOlaDecompoEasy}, \((\gr^{i}(\hat{\mE}_{(-1,k)}(\OBdR^{\la})))\cong 0\) is concentrated in cohomological degree \(1\) if \(0\le i <k\),
and \(\hat{\mE}_{(k-1,0)}(\OBdR^{\la})\cong \hat{\mE}_{(k-1,0)}(\OBdR^{+,\la})\).

On the other hand, by Corollary \ref{corOBdRLAlg}, \(\hat{E}_{0}(
\RHom_{\fg}(\Sym^{k-1}V,\OBdR^{\la}))\cong \Sym^{k-1}D^{\sm}\)
is concentrated in degree \(0\), and \(\gr^{i}\cong 0\) unless \(0\le i<k\). Thus we have an isomorphism \(\Sym^{k-1}D^{\sm}\cong H^{0}(\hat{\mE}_{(k-1,0)}(\OBdR^{+,\la}/\Fil^{k}))\).
\end{proof}

\subsection{Computation of Faltings's extension}\label{subsecFaltingsExt}
In this section, we compute the Sen operator for Faltings's extension in terms of Theorem \ref{mainthmPanPilloni}.  

Using Kodaira-Spencer isomorphism, we will identify $\Omega^{1,\sm}_{\log}\cong\omega^{(1,-1),\sm}$. By Definition \ref{dfntadicFil}, for \(m\in\NN\), \(\gr^{m}(\OBdR^{+,\la})\) has an ascending filtration \(\Fil^{t}_{*}\)
such that the non-trivial graded pieces are \[\gr^{t}_{i}(\gr^{m}\OBdR^{+,\la})\cong  (\Omega^{1,\sm}_{\log})^{\otimes (m+i)}\otimes_{\mO^{\la}}\hat{\mO}(-i)\cong \omega^{(m+i,-m-i),\la}(-i)
\] for \(-m\le i \le 0\).


\begin{prop}\label{propFaltingsExt} Let     \( (a,b)\in\ZZ^{\oplus 2} \)
and $k:=|a-b+1|$. Let $m\in\ZZ_{\ge0}$, and write \(\ti{\mF}_{m+k}:=\gr^{m+k}\OBdR^{+,\la}(\min(b,1+a))\) equipped with the ascending filtration \(\Fil^{t}_{*}\). We will use the notation of Theorem \ref{mainthmPanPilloni}. 

(1) If $a+1< b$, then we have a short exact sequence in \(\mrm{Perv}\)       \[
     0\to \mE_{(a,b)}(\gr^{t}_{-k}(\ti{\mF}_{m+k}))\to  \mE_{(a,b)}(\ti{\mF}_{m+k})   \to \mE_{(a,b)}(\gr^{t}_{0}(\ti{\mF}_{m+k}))\to 0  ,
        \] and the action of $\Theta$ on \(\mcal{E}_{(a,b)}(\ti{\mF}_{m+k})\) is zero on the graded pieces, and is induced by the composition \begin{align*}
        \mE_{(a,b)}(\gr^{t}_{0}(\ti{\mF}_{m+k}))\cong (\Omega^{1,\sm}_{\log})^{\otimes(m+k)}\otimes \mcal{N}_{1+a}\cong i_{*}\omega^{(m-a,-m-b),\sm}[-1]\\
        \xrightarrow{\mrm{Cou}} j_{!}\omega^{(m-a,-m-b),\sm}
        \cong (\Omega^{1,\sm}_{\log})^{\otimes m}\otimes \mcal{N}_{b}
        \cong\mE_{(a,b)}(\gr^{t}_{-k}(\ti{\mF}_{m+k})).
        \end{align*} 

(2) If $a+1>b$, then we have a short exact sequence in \(\mrm{Perv}\)     \[
     0\to \mE_{(a,b)}(\gr^{t}_{-k}(\ti{\mF}_{m+k}))\to  \mE_{(a,b)}(\ti{\mF}_{m+k})   \to \mE_{(a,b)}(\gr^{t}_{0}(\ti{\mF}_{m+k}))\to 0  ,
        \] and the action of $\Theta$ on \(\mcal{E}_{(a,b)}(\ti{\mF}_{m+k})\) is zero on the graded pieces, and is induced by the composition \begin{align*}
        \mE_{(a,b)}(\gr^{t}_{0}(\ti{\mF}_{m+k}))\cong (\Omega^{1,\sm}_{\log})^{\otimes(m+k)}\otimes \mcal{N}_{b}\twoheadrightarrow i_{*}\omega^{(m+1-b,-m-a-1),\sm}[-1]\\
        \cong (\Omega^{1,\sm}_{\log})^{\otimes m}\otimes \mcal{N}_{1+a}
        \cong\mE_{(a,b)}(\gr^{t}_{-k}(\ti{\mF}_{m+k})).
        \end{align*}

\end{prop}
\begin{rmk}
We are repeatedly using the following trick: \[
\mE_{(a,b)}(\omega^{\chi,\la})\cong \mE_{(a,b)}(\omega^{\chi,\sm}\otimes_{\mO^{\sm}}\mO^{\la})\cong \omega^{\chi,\sm}\otimes_{\mO^{\sm}}\mE_{(a,b)}(\mO^{\la}) .
\]
\end{rmk}
\begin{proof} We first explain how to reduce the problem to a calculation over the flag variety, and then the proof will be finished in Lemma \ref{lemFaltingsExtOnFl}.

The claim only depends on $k$ by Remark \ref{rmkTwistInBdR}. Therefore, in the case of (1), we consider $(a,b)=(k-1,0)$, and in the case of (2), we put $(a,b)=(-1,k)$ for $k\in\ZZ_{\ge1}$. In this way, we are working with the Faltings's extension $\gr^{*}\OBdR^{+,\la}$ without the Tate twist.

By Theorem 4.2.2 of \cite{Pan22}, $\gr^{1}\OBdR^{+}\cong (V\otimes \hat{\mO})\otimes_{\mO^{\sm}} \omega^{(0,-1),\sm}$, and the Faltings's extension coincides (up to a sign) with the $\omega^{(0,-1),\sm}$-twist of         \[
        0\to \omega^{(0,1)}(1)\to V\otimes\hat{\mO}\to \omega^{(1,0)}\to 0 .
        \] Note that our normalization is different from \cite{Pan22}, which changes the Hodge-Tate weight here.
The sequence above is the pull-back 
of the following sequence on $\Fl$:
        \[
        0\to \omega^{(0,1)}_{\Fl}\to V\otimes\mO_{\Fl}\to \omega^{(1,0)}_{\Fl}\to 0 .
        \]  
We define an ascending filtration \(\Fil^{t}_{*}\) on \(V\otimes\mO_{\Fl}\) such that \(\gr^{t}_{0}\cong \omega^{(1,0)}_{\Fl}\), and \(\gr^{t}_{-1}\cong \omega^{(0,1)}_{\Fl}\), which induces an ascending filtration on \(\Sym^{k}V\otimes \mO_{\Fl}\) such that \(\gr^{t}_{-i}\cong \omega_{\Fl}^{(k-i,i)}\).

For \(k\in\ZZ_{\ge0}\), $\gr^{k}\OBdR^{+,\la}\cong\Sym^{k}_{\mO^{\la}}\gr^{1}\OBdR^{+,\la}$, so \begin{align}\label{alignFaltingsIsTwist}
\gr^{k}\OBdR^{+,\la}\cong \omega^{(0,-k),\sm}\otimes_{\mO^{\sm}}\Sym^{k}V\otimes\mO^{\la},
\end{align} that is \(G(\QQ_{p})\)-equivariant and compatible with filtrations \(\Fil^{t}_{*}\) on both sides. 


We consider the sheaf       \[
                \mF_{m+k}:=(\Sym^{m+k}V\otimes C^{\la},*_{1,3})\in\QCoh_{\fg}^{\rla}(\Fl)^{\fn^{0}}
                \] 
                where \(*_{1,3}\) is as in Notation \ref{egClaVBOla}. We will denote by $*_{4}$ the \(\fg\)-action on $\Sym^{m+k}V$.

                We define an ascending filtration on 
                \(\mF_{m+k}\cong (\Sym^{m+k}V\otimes\mO_{\Fl})\hatotimes_{\mO_{\Fl}}C^{\la}\) as the one induced from \(\Sym^{m+k}V\otimes \mO_{\Fl}\). 
\begin{lem}\label{lemVBFmkGrOBdR}
We have a filtered isomorphism \[\VB(\mF_{m+k})\cong \gr^{m+k}\OBdR^{+,\la}\otimes_{\mO^{\sm}}\omega^{(0,m+k),\sm},
\] the action of $\fg$ on the RHS is induced by $*_{2,4}$ on $\mF_{m+k}$, and the arithmetic Sen operator $\Theta$ on the RHS is induced by \(\theta_{\fh}(1,0)\).
\end{lem}
\begin{proof}[Proof of the lemma]
This follows from (\ref{alignFaltingsIsTwist}) and Theorem \ref{thmVBClaIsOla}.
\end{proof}

By Lemma \ref{lemVBFmkGrOBdR}, Theorem \ref{thmMainThmGeomSenVB} and Lemma \ref{rmkDeduceSVfromFl}, the proof of Proposition \ref{propFaltingsExt} reduces to Lemma \ref{lemFaltingsExtOnFl} below.
\end{proof}

\begin{lem}\label{lemFaltingsExtOnFl}
        Let $k\in\ZZ_{\ge1}$ and $m\in\ZZ_{\ge0}$. In what follows, \(\mcal{E}_{(a,b)}\) is taken with respect to \(*_{2,4}\). 

(1) We have a distinguished triangle       \[
     \mE_{(-1,k)}(\gr^{t}_{-k}(\mF_{m+k}))
     \to  \mE_{(-1,k)}(\mF_{m+k})   \to \mE_{(-1,k)}(\gr^{t}_{0}(\mF_{m+k}))\xrightarrow{+1},
        \] and the action of $\theta_{\fh}(1,0)$ on $ \mE_{(-1,k)}(\mF_{m+k})$ is induced by the composition \begin{align*}
        \mE_{(-1,k)}(\gr^{t}_{0}(\mF_{m+k}))\cong i_{*}i^{\dagger}\omega^{(m+1,0)}_{\Fl}[-1]
        \xrightarrow{\mrm{Cou}} j_{!}j^{*}\omega^{(m+1,0)}_{\Fl}\cong \mE_{(-1,k)}(\gr^{t}_{-k}(\mF_{m+k})).
        \end{align*} 

(2) We have a distinguished triangle     \[
     \mE_{(k-1,0)}(\gr^{t}_{-k}(\mF_{m+k}))
     \to  \mE_{(k-1,0)}(\mF_{m+k})   \to \mE_{(k-1,0)}(\gr^{t}_{0}(\mF_{m+k}))\xrightarrow{+1},
        \] and the action of $\theta_{\fh}(1,0)$ on $ \mE_{(k-1,0)}(\mF_{m+k})$ is induced by the composition \begin{align*}
        \mE_{(k-1,0)}(\gr^{t}_{0}(\mF_{m+k}))&\cong \left[\omega^{(m+1,0)}_{\Fl}-i_{*}i^{\dagger}\omega^{(m+1,0)}_{\Fl}[-1]\right]\\&
        \xrightarrow{p} i_{*}i^{\dagger}\omega^{(m+1,0)}_{\Fl}[-1] \cong \mE_{(k-1,0)}(\gr^{t}_{-k}(\mF_{m+k})),
        \end{align*}  where the map $p$ is the natural projecting.
\end{lem}
The proof will be finished at the end of this subsection. 
The key idea is to reduce the problem to the case $m=k=0$ treated in \cite{Pilloni22}. 

\begin{lem}\label{lemReducektok=1}
For $k\in\ZZ_{\ge 1},m\in\ZZ_{\ge0}$, we have an isomorphism         \[
        \mE_{(-1,k)}(\mF_{m+k})\cong \mE_{(-1,0)}(C^{\la})\otimes_{\mO_{\Fl}}\omega^{(m,0)}_{\Fl}\cong\mE_{(k-1,0)}(\mF_{m+k}) ,
        \] which is compatible with respect to the action of $\theta_{\fh}(1,0)$.
\end{lem}
\begin{proof}  Note
\[\Sym^{m+k}V\cong \left[\chi^{(m+k,0)}-\chi^{(m+k-1,0)}-\cdots-\chi^{(0,m+k)}\right].\] So     \( \Ext_{(\fb,*_{2,4})}((-1,k),\Sym^{m+k}V\otimes C^{\la}) \)
is filtered by         \[
\Ext_{(\fb,*_{2,4})}((-1,k),\chi^{(i,m+k-i)}\otimes C^{\la})\cong \Ext_{(\fb,*_{2})}((-1-i,i-m),C^{\la}),\;0\le i\le m+k ,
        \] on which $\Theta:=\theta_{\fh}(1,0)$ acts by $i$ and $m-i$ by Theorem \ref{mainthmPilloniOnFl}.         
So when taking $E_{0}(-)$, 
only the graded piece with $i=0$ and $i=m$ will survive. We have a \(\fb\)-equivariant map $\Sym^{m+k}V\to \Sym^{m}V\otimes\chi^{(0,k)}$, and
we have \begin{align*}
        \mE_{(-1,k)}(\Sym^{m+k}V\otimes C^{\la})
\cong\mE_{(-1,k)}(\Sym^{m}V\otimes\chi^{(0,k)}\otimes C^{\la})\\\cong  \mE_{(-1,0)}(\Sym^{m}V\otimes C^{\la}),
\end{align*} that is, \[
        \mE_{(-1,k)}(\mF_{m+k})\cong \mE_{(-1,0)}(\mF_{m}) \cong \mE_{(-1,0)}(C^{\la})\otimes\omega^{(m,0)}_{\Fl},
        \] where for the last isomorphism, we use the filtration \(\Fil^{t}_{*}\) on \(\Sym^{m}V\otimes C^{\la}\) with \(\gr^{t}_{-i}\cong C^{\la}\hatotimes \omega^{(m-i,i)}_{\Fl}\), and \(\mE_{(-1,0)}(\gr^{t}_{-i})\cong 0\) unless \(i=0\). 
The isomorphism is clearly compatible with $\Theta:=\theta_{\fh}(1,0)$-action.

The same argument also works for $(k-1,0)$ in place of $(-1,k)$.
\end{proof}

We have the following reformulation of the results of \cite{Pilloni22}, describing the action of
$\Theta$ on $\mE_{(-1,0)}(C^{\la})$.
\begin{lem}[{\cite[Proposition 6.3]{Pilloni22}}]
\label{lemCasek=1CoveredInPilloni}
We have an isomorphism         \[
        \mE_{(-1,0)}(C^{\la})\cong \left[
i_{*}i^{\dagger}\omega^{(1,0)}_{\Fl}[-1]-\omega^{(1,0)}_{\Fl}-i_{*}i^{\dagger}\omega^{(1,0)}_{\Fl}[-1]
    \right].
        \] Moreover, consider     \( \Theta:=\theta_{\fh}(1,0), \) 
    \( \Theta \)
is given by   the composition      \[
        \mE_{(-1,0)}(C^{\la})\twoheadrightarrow i_{*}i^{\dagger}\omega^{(1,0)}_{\Fl}[-1]\hookrightarrow \mE_{(-1,0)}(C^{\la}) ,
        \] where the first map is surjecting onto the top graded piece, and the second map is the inclusion of the lowest graded piece. In particular,
we have         \[
        \Ker(\Theta)\cong \left[
i_{*}i^{\dagger}\omega^{(1,0)}_{\Fl}[-1]-\omega^{(1,0)}_{\Fl}
    \right]\cong j_{!}j^{*}\omega^{(1,0)}_{\Fl},\;\Coker(\Theta)\cong \left[\omega^{(1,0)}_{\Fl}-i_{*}i^{\dagger}\omega^{(1,0)}_{\Fl}[-1]
    \right].
        \] Here $\Ker(\Theta)$ and $\Coker(\Theta)$ are taken in     \( {}^{p}B^{\heartsuit} \) defined in Lemma \ref{lemPerverseOnQCoh}.
\end{lem}
\begin{proof}
Theorem \ref{thmMainThmGeomSenVB} gives
        \[
        0\to j_{!}j^{*}\omega^{(1,0)}_{\Fl}\to \mE_{(-1,0)}(C^{\la})\to i_{*}i^{\dagger}\omega^{(1,0)}_{\Fl}\to 0 .
        \]
This gives the first isomorphism by noting $ j_{!}j^{*}\omega^{(1,0)}_{\Fl}\cong\left[
i_{*}i^{\dagger}\omega^{(1,0)}_{\Fl}[-1]-\omega^{(1,0)}_{\Fl}
    \right]$. Moreover, $\Theta$ is zero on the top graded pieces. In particular, we know $\Theta$ is induced by a canonical map     \( f:i_{*}i^{\dagger}\omega^{(1,0)}_{\Fl}[-1]\to j_{!}j^{*}\omega^{(1,0)}_{\Fl}. \) The claim is equivalent to saying that $f$ coincides with the Cousin map. We apply $\Gamma(\Theta,-)$ to the sequence, and take ${}^{p}H^{i}(-)$ gives a long exact sequence \begin{align*}
0&\to j_{!}j^{*}\omega^{(1,0)}_{\Fl} \xrightarrow{\iota} {}^{p}H^{0}(\Theta,\mE_{(-1,0)}(C^{\la}))\to i_{*}i^{\dagger}\omega^{(1,0)}_{\Fl}[-1]\\
&\xrightarrow{\delta} j_{!}j^{*}\omega^{(1,0)}_{\Fl} \xrightarrow{s} {}^{p}H^{1}(\Theta,\mE_{(-1,0)}(C^{\la})) \xrightarrow{p}  i_{*}i^{\dagger}\omega^{(1,0)}_{\Fl}[-1]\to 0.  
    \end{align*} Simple diagram chasing tells us that $f\cong \delta$. Now the long exact sequence given by natural t-structure (Remark \ref{rmkNaturalt-str}) $H^{*}(R\Gamma(\Theta,-))$ tells us that $H^{0}(\Theta,\mE_{(-1,0)}(C^{\la}))\cong j_{!}j^{*}\omega^{(1,0)}_{\Fl}$,
    and $H^{2}(\Theta,\mE_{(-1,0)}(C^{\la}))\cong i_{*}i^{\dagger}\omega^{(1,0)}_{\Fl}$. 
Moreover, \cite[Proposition 6.3]{Pilloni22} tells us that $H^{1}(\Theta,\mE_{(-1,0)}(C^{\la}))\cong \omega^{(1,0)}_{\Fl}$. Hence         \[
{}^{p}H^{0}(\Theta,\mE_{(-1,0)}(C^{\la}))\cong j_{!}j^{*}\omega^{(1,0)}_{\Fl},\;{}^{p}H^{1}(\Theta,\mE_{(-1,0)}(C^{\la}))\cong \left[\omega^{(1,0)}_{\Fl}-i_{*}i^{\dagger}\omega^{(1,0)}_{\Fl}[-1]
    \right]        .
        \] By Lemma \ref{lemAnyMapIsIsoInQCohg}, we then know $\iota$ is an isomorphism, and $p$ induces an isomorphism $i_{*}i^{\dagger}\omega^{(1,0)}_{\Fl}[-1]\cong i_{*}i^{\dagger}\omega^{(1,0)}_{\Fl}[-1]$. Thus we are left with a triangle        \[
                i_{*}i^{\dagger}\omega^{(1,0)}_{\Fl}[-1]\xrightarrow{\delta} j_{!}j^{*}\omega^{(1,0)}_{\Fl}\xrightarrow{s} \omega^{(1,0)}_{\Fl}\xrightarrow{+1}i_{*}i^{\dagger}\omega^{(1,0)}_{\Fl} .
                \]  Such extension is necessarily the excision extension. This implies $\delta$ coincides with the Cousin map. 
\end{proof}

We can now finish the proof of Lemma \ref{lemFaltingsExtOnFl}.
\begin{proof}[Proof of Lemma \ref{lemFaltingsExtOnFl}] 
Consider \(\Fil^{t}_{*}\) on 
$\mE_{(-1,k)}(\mF_{m+k})$. The graded pieces are \[\gr^{t}_{i-m-k}\cong \mE_{(-1,k)}(\omega^{(i,m+k-i)}_{\Fl}\otimes C^{\la})\] for $0\le i\le m+k$. 
By Theorem \ref{mainthmPilloniOnFl}, we know that
    \( \gr^{t}_{i-m-k}\mcal{E}_{(-1,k)}(\mF_{m+k})\cong 0 \) unless $i=m+k$ or $m$, and  \begin{align}\label{alignOnceObtainShortES}
    \mE_{(-1,k)}(\omega^{(m,k)}_{\Fl}\otimes C^{\la})\cong j_{!}j^{*}\omega^{(m+1,0)}_{\Fl},\;\mE_{(-1,k)}(\omega^{(m+k,0)}_{\Fl}\otimes C^{\la})\cong i_{*}i^{\dagger}\omega^{(m+1,0)}_{\Fl}[-1] .
    \end{align}
 Note that all the terms lie in $\mcal{C}_{0}$, and in particular, we can work in ${}^{p}B^{\heartsuit}$ defined in Lemma \ref{lemPerverseOnQCoh}, and the triangle gives short exact sequences in ${}^{p}B^{\heartsuit}$: 
        \[
        0\to     \mE_{(-1,k)}(\omega^{(m,k)}_{\Fl}\otimes  C^{\la})
     \to  \mE_{(-1,k)}(\mF_{m+k})   \to \mE_{(-1,k)}(\omega^{(m+k,0)}_{\Fl}\otimes C^{\la})\to 0, 
        \] such that $\Theta$ acts trivially on the graded pieces. By Lemma \ref{lemReducektok=1} and Lemma \ref{lemCasek=1CoveredInPilloni}, we have     
                \begin{align*}
\Coker(\Theta)\cong \left[\omega^{(m+1,0)}_{\Fl}-i_{*}i^{\dagger}\omega^{(m+1,0)}_{\Fl}[-1]
    \right]
\twoheadrightarrow  \mE_{(-1,k)}(\omega^{(m+k,0)}_{\Fl}\otimes C^{\la})\\\cong i_{*}i^{\dagger}\omega^{(m+1,0)}_{\Fl}[-1].
                \end{align*}
                Since \[\RHom(\omega^{(m+1,0)}_{\Fl},i_{*}i^{\dagger}\omega^{(m+1,0)}_{\Fl})\in D^{\ge0},\] we know that the map factors uniquely through $i_{*}i^{\dagger}\omega^{(m+1,0)}_{\Fl}[-1]$, which then by Lemma \ref{lemAnyMapIsIsoInQCohg} gives an isomorphism. This implies that by Lemma \ref{lemCasek=1CoveredInPilloni},
        \(
        \mE_{(-1,k)}(\omega^{(m,k)}_{\Fl}\otimes  C^{\la})\cong \Ker(\Theta) ,
        \) and the action of $\Theta$ is as described.

The proof for $(k-1,0)$
is similar. Once we obtain (\ref{alignOnceObtainShortES}), 
we have a surjection \begin{align*}
        p:\Coker(\Theta)\cong \left[\omega^{(m+1,0)}_{\Fl}-i_{*}i^{\dagger}\omega^{(m+1,0)}_{\Fl}[-1]
    \right]
        \twoheadrightarrow \mE_{(k-1,0)}(\omega^{(m+k,0)}_{\Fl}\otimes C^{\la})\\\cong \left[\omega^{(m+1,0)}_{\Fl}-i_{*}i^{\dagger}\omega^{(m+1,0)}_{\Fl}[-1]
    \right] .
\end{align*} We then first project to $i_{*}i^{\dagger}\omega^{(m+1,0)}_{\Fl}[-1]$, and by the same argument using Lemma \ref{lemAnyMapIsIsoInQCohg}, we know $p$ induces an isomorphism on the top graded piece     \(p: i_{*}i^{\dagger}\omega^{(m+1,0)}_{\Fl}[-1]\cong i_{*}i^{\dagger}\omega^{(m+1,0)}_{\Fl}[-1] \). Hence $p$ induces a surjection     \( p:\omega^{(m+1,0)}_{\Fl}\to \omega^{(m+1,0)}_{\Fl} \), which again by Lemma \ref{lemAnyMapIsIsoInQCohg} is an isomorphism. Therefore, we know \[\Coker(\Theta)\cong \mE_{(k-1,0)}(\omega^{(m+k,0)}_{\Fl}\otimes C^{\la}),\] and we are again done by Lemma \ref{lemCasek=1CoveredInPilloni}.
\end{proof}


\subsection{Proof of Theorem \ref{thmFontaine=Theta}}\label{subsectionGeneral(a,b)a-bge0}
In this subsection, we finish the proof of Theorem \ref{thmFontaine=Theta}. By Remark \ref{rmkTwistInBdR} and Corollary \ref{cork<0FollowsFromk>0}, it suffices to consider \((a,b)=(k-1,0)\) for \(k\in \ZZ_{\ge1}\).

By Lemma \ref{lemOlaDecompoEasy},
we have an exact sequence in \(\mrm{Perv}\)          \[
0\to \hat{\mE}_{(k-1,0)}(\gr_{-k}^{t}\OBdR^{\la})\to \hat{\mE}_{(k-1,0)}(\OBdR^{\la})\to \hat{\mE}_{(k-1,0)}(\gr_{0}^{t}\OBdR^{\la})\to 0,
\] and the action of $\Theta$ on $\gr_{*}^{t}$ is zero, 
and thus its action on $\hat{\mE}_{(k-1,0)}(\OBdR^{\la})$ is induced by a unique canonical map 
\begin{align}\label{alignFontaineOBdR}
\ti{N}^{k}:\gr^{t}_{0}\hat{\mE}_{(k-1,0)}(\OBdR^{\la})\to \gr^{t}_{-k}\hat{\mE}_{(k-1,0)}(\OBdR^{\la}).  
\end{align} 
\begin{lem}
The map $\ti{N}^{k}$ as defined above is $\mO^{\sm}$-linear, filtered, and compatible with connections, where both sides are equipped with the induced Hodge filtration (Definition \ref{dfntadicFil}).
\end{lem}
\begin{proof}
By Proposition \ref{propf*equalsE0}, \(E_{0}(-)\) is lax symmetric monoidal. 

Therefore, \(\hat{\mE}_{(a,b)}(\OBdR)\) is an \(E_{0}(\mO^{\sm})\)-module in \(D(\QQ_{p}[\Theta])\) (equipped with convolution symmetric monoidal structure).
We know that \(E_{0}(\mO^{\sm})\cong \mO^{\sm}\), and $\Theta$ acts on $\mO^{\sm}$ by zero. As a result, the action of $\Theta$ on \(\hat{\mE}_{(a,b)}(\OBdR)\) is $\mO^{\sm}$-linear, filtered, and commutes with connections, and thus $\ti{N}^{k}$ is also \(\mO^{\sm}\)-linear, filtered, and commutes with connections.
\end{proof}
As a corollary, $\Fib(\ti{N}^{k})$ inherits the \(\mO^{\sm}\)-module structure, the Hodge filtration and the connection. By Lemma \ref{lemOlaDecompoEasy}, Theorem \ref{mainthmPanPilloni}, and Proposition \ref{propFaltingsExt}, we see that 
\begin{align}\label{aligngrjFibNkl}\gr^{j}\Fib(\ti{N}^{k})\cong
\begin{cases}\left[\omega^{(j-k+1,-j),\sm}-i_{*}\omega^{(j-k+1,-j),\sm}[-1]\right],&0\le j\le k-1;\\
\omega^{(j-k+1,-j),\sm},&j\ge k.
\end{cases}
\end{align}
By Corollary \ref{propH0OBdR}, we have canonical $\Gal_{\QQ_{p}}$-equivariant
maps         \[
        \Sym^{k-1}D^{\sm}\to \hat{\mE}_{(k-1,0)}(\OBdR^{\la})\to \hat{\mE}_{(k-1,0)}(\gr^{t}_{0}\OBdR^{\la}),
        \] that are compatible with connections.
        
        Since the action of $\Theta$ on $\Sym^{k-1}D^{\sm}$ is zero, this induces a canonical map         \[
                i_{k}:\Sym^{k-1}D^{\sm}\to \Fib(\ti{N}^{k}) ,
                \] that is \(\mO^{\sm}\)-linear, filterd, and compatible with connections.
                Moreover, by Corollary \ref{propH0OBdR}, upon projecting to $\Fib(\ti{N}^{k})/\Fil^{k}$, this induces an isomorphism         \[
                        i_{k}:\Sym^{k-1}D^{\sm}\cong H^{0}(\Fib(\ti{N}^{k})/\Fil^{k}) .
                        \]

\begin{prop}\label{propNabla=GM} For $k\in\ZZ_{\ge 1}$, we have a 
        filtered     \( B(\QQ_{p})\times\TT(K^{p}) \)-equivariant $\mO^{\sm}$-linear isomorphism    \begin{align}\label{alignH1OBdR}
H^{1}(\Fib(\ti{N}^{k}))\cong i_{*}\Sym^{k-1}D^{\sm}
\end{align} that is compatible with connections. 
\end{prop}
\begin{proof}
For $k=1$, from (\ref{aligngrjFibNkl}), we see that $H^{1}(\Fib(\ti{N}^{1}))\cong i_{*}\mO^{\sm}$ as desired. For the compatibility with connections, we use the following uniqueness: let \(\nabla:i_{*}\mO^{\sm}\to i_{*}\Omega^{1,\sm}_{\log}\) be any \(B(\QQ_{p})\)-equivariant connection, then $\nabla-\nabla_{\mrm{GM}}:i_{*}\mO^{\sm}\to i_{*}\Omega_{\log}^{1,\sm}$ is a canonical $\mO^{\sm}$-linear map.
In particular, it is $B(\QQ_{p})$-invariant. This corresponds to some $f\in M_{(1,-1)}^{\dagger}$ that is $B(\QQ_{p})$-invariant.  We conclude that $f=0$ by Lemma \ref{lemNoBinv}, and  thus     \( \nabla=\nabla_{\mrm{GM}} \) as desired. 


For general $k\in\ZZ_{\ge 1}$, by Proposition \ref{propf*equalsE0}, \(\hat{\mE}_{(k-1,0)}(\OBdR^{\la})\) has a natural structure of an \(\hat{\mE}_{(0,0)}(\OBdR^{\la})\)-module. In particular, we have \[\hat{\mE}_{(0,0)}(\OBdR^{\la})\otimes_{\mO^{\sm}}\hat{\mE}_{(k-1,0)}(\OBdR^{\la})\to \hat{\mE}_{(k-1,0)}(\OBdR^{\la}).
\]
This map is compatible with $\Theta$-action, and with Hodge filtrations, and $t$-adic filtrations. Thus this induces a map 
\begin{align}\label{alignImportantIso}
\Sym^{k-1}D^{\sm}\otimes_{\mO^{\sm}}i_{*}\mO^{\sm}\xrightarrow{i_{k}\otimes 1}
                H^{0}(\Fib(\ti{N}^{k}))\otimes_{\mO^{\sm}}H^{1}(\Fib(\ti{N}^{1}))\to H^{1}(\Fib(\ti{N}^{k})),
\end{align}
that is compatible with connections and Hodge filtrations.                 
So it suffices to show that the composition in (\ref{alignImportantIso}) is an isomorphism.
Since the map is compatible with the Hodge filtrations, so it suffices to look at the graded piece \(\gr^{i}\) for \(0\le i\le k-1\), and we are reduced to proving 
Lemma \ref{lemTechnicalSen} below.
\end{proof}
\begin{lem}\label{lemTechnicalSen}
By Proposition \ref{propf*equalsE0}, \(\hat{\mE}_{(k-1,0)}(\mO^{\la})\) has a natural structure of an \(\hat{\mE}_{(0,0)}(\mO^{\la})\)-module. Then the induced map 
        \[
        H^{0}(\mE_{(k-1,0)}(\mO^{\la}))\otimes_{\mO^{\sm}}H^{1}(\mE_{(0,0)}(\mO^{\la}))\to H^{1}(\mE_{(k-1,0)}(\mO^{\la})) 
        \] is an isomorphism.
\end{lem}
Using $\VB$, this is reduced to the corresponding question on $\Fl$:
\begin{lem}
Let $k\in\ZZ_{\ge1}$. By Proposition \ref{propf*equalsE0}, \(\mE_{(k-1,0)}\) has a structure of an \(\mE_{(0,0)}(C^{\la})\)-module, and the following composition induces an isomorphism \[
        \omega^{(1-k,0)}_{\Fl}\hatotimes_{\mO_{\Fl}} \mE_{(0,0)}(C^{\la})\to \mE_{(k-1,0)}(C^{\la})\otimes_{\mO_{\Fl}}\mE_{(0,0)}(C^{\la})\to   \mE_{(k-1,0)}(C^{\la}).
        \] where the first map is induced by $\omega^{(1-k,0)}_{\Fl}\to \mE_{(k-1,0)}(C^{\la})$ in Theorem \ref{mainthmPilloniOnFl}
\end{lem}
\begin{proof}
We have the natural morphism as above. 
By Theorem \ref{mainthmPilloniOnFl}, we know         \[
        \mE_{(0,0)}(C^{\la})\cong\left[
        \mO_{\Fl}-i_{*}i^{\dagger}\mO_{\Fl}[-1]
        \right],\; \mE_{(k-1,0)}(C^{\la})\cong\left[
        \omega^{(1-k,0)}_{\Fl}-i_{*}i^{\dagger}\omega^{(1-k,0)}_{\Fl}[-1]
        \right].
        \]
First, we see that $\omega^{(1-k,0)}_{\Fl}\otimes_{\mO_{\Fl}}\mO_{\Fl}\to \omega^{(1-k,0)}_{\Fl}$ is an isomorphism. Therefore, we have an induced morphism    \[\omega^{(1-k,0)}_{\Fl}\hatotimes_{\mO_{\Fl}} 
i_{*}i^{\dagger}\mO_{\Fl}[-1]\to i_{*}i^{\dagger}\omega^{(1-k,0)}_{\Fl}[-1].
        \] It suffices to show that it is an isomorphism. Note that both sides are concentrated in $\infty$, so we can verify the isomorphism after applying $i^{\dagger}.$ 

Now we need to go through the proof of \cite[Proposition 5.7, Lemme 5.6]{Pilloni22}. We refer readers to \cite{Pilloni22} for the undefined notation. In what follows, $*_{g}$ and $*_{d}$ denote the left action and right action respectively.
We have by \cite[Lemma 5.5]{Pilloni22}
that         \[
        i^{\dagger}C^{\la}\cong (\mO_{T,1}\hatotimes_{\CC_{p}}\mO_{U\backslash G,1})^{(\fh,*_{g}\otimes*_{g})=0} ,
        \] with $*_{2}$ on the LHS corresponding to $\id\otimes *_{d}$ on the right hand side.
 Then for any $\chi\in \CC_{p}^{\oplus2}$         \[
                \Ext_{(\fb,*_{2})}(\chi,i^{\dagger}C^{\la})\cong
             (\mO_{T,1}\hatotimes_{\CC_{p}}
                \Ext_{(\fb,*_{d})}(\chi,\mO_{U\backslash G,1}))^{(\fh,*_{g}\otimes*_{g})=0} .
                \] 
So now we are reduced to proving that the natural morphism        \begin{align}\label{alignEqOUbackG}
        H^{0}(\mE_{(k-1,0)}(\mO_{U\backslash G,1}))\hatotimes_{\CC_{p}} H^{1}(\mE_{(0,0)}(\mO_{U\backslash G,1}))\to H^{1}(\mE_{(k-1,0)}(\mO_{U\backslash G,1})) 
\end{align}  is an isomorphism, where we are taking $(\fb,*_{d})$ when applying $\mE_{(k-1,0)}$.
By the proof of \cite[Lemme 5.5]{Pilloni22}, we know for $\ell\in\ZZ_{\ge 0}$, $\RHom_{(\fb,*_{d})}((\ell,0),\mO_{U\backslash G,1}) $ is represented by a complex         \[\varinjlim_{m\in\ZZ_{\ge1}}
        \widehat{\bigoplus}_{n\ge 0}\CC_{p}\cdot \frac{1}{p^{nm}} x^{n}a^{\ell-n}d^{n}\xrightarrow{d} \varinjlim_{m\in\ZZ_{\ge1}}\widehat{\bigoplus}_{n\ge -1}\CC_{p}\cdot \frac{1}{p^{nm}} x^{n+1}a^{\ell-n}d^{n} ,
        \] where $x,a,d$ are some variables on $U\backslash G$. The differential $d$ is given by         \[
                d\left(\sum_{n\ge 0}a_{n}x^{n}a^{\ell-n}d^{n}\right)=\sum_{n\ge 0}(\ell-n)a_{n}x^{n+1}a^{\ell-n}d^{n} .
                \]
We see that the part $n=-1$ is always in the cokernel, and corresponds to weight $(\ell+1)$-part by \cite[Lemme 5.5]{Pilloni22}. If we take the weight $0$-part, we see $H^{0}$ is $1$-dimensional generated by $x^{\ell}d^{\ell}$, and $H^{1}$ is also $1$-dimensional generated by $x^{\ell+1}d^{\ell}$ (by taking $n=\ell)$.
In our situation, $H^{0}(\mE_{(k-1,0)}(\mO_{U\backslash G,1}))$ is generated by $x^{k-1}d^{k-1}$, $H^{1}(\mE_{(k-1,0)}(\mO_{U\backslash G,1}))$ is generated by $x^{k}d^{k-1}$, and $H^{1}(\mE_{(0,0)}(\mO_{U\backslash G,1}))$ is generated by $x$. By $(x^{k-1}d^{k-1})\cdot x=x^{k}d^{k-1}$, we know the map (\ref{alignEqOUbackG}) is an isomorphism.
\end{proof}

\begin{lem}\label{lemNoBinv}
If $H^{0}(\infty,\omega^{(a,b),\sm})^{B(\QQ_{p})}\ne 0$, then \(a=b\). 
\end{lem}
\begin{proof}
Let $f$ be a non-zero $B(\ZZ_{p})$-invariant section \(\omega^{(a,b),\sm}\). Then it comes from a section at level \(K^{p}\Gamma_{0}(p^{n})\) for \(n\) sufficiently large. By conjugating by \(\mrm{diag}(p,1)\in B(\QQ_{p})\), we can assume \(n=1\). 
Let \(x\) be a cusp on any connected component of \(\mX_{K^{p}\Gamma_{0}(p),\infty}^{\mrm{ord}}\). We can look at its $q$-expansion $f=\sum_{i\in \NN}a_{i}q^{i}$ at \(x\). 

We consider the action of Frobenius $\phi$ that is given by the action of $\begin{pmatrix}
p^{-1} & 0\\0 & 1
\end{pmatrix}$, so $\phi(f)=\begin{pmatrix}
p^{-1} & 0\\0 & 1
\end{pmatrix}f=f$. But in terms of the \(q\)-expansion, $\phi(f)=\sum_{i\in \NN}a_{i}q^{pi}$, so $\phi(f)=f$ forces $a_{i}=0$ for $i>0$. Then multiplying by \(f\) induces an isomorphism \(i_{x}^{-1}\omega^{(a,b)}_{\mX_{K^{p}\Gamma_{0}(p)}}\cong i_{x}^{-1}\mO_{\mX_{K^{p}\Gamma_{0}(p)}}\) that is compatible with \(U_{p}\)-operators, which implies that \(a=b\). 
\end{proof}

Now we can finish the proof of Theorem \ref{thmFontaine=Theta}.
\begin{proof}[Proof of Theorem \ref{thmFontaine=Theta}]
Note that by Remark \ref{rmkTwistInBdR} and Corollary \ref{cork<0FollowsFromk>0}, we can restrict ourselves to the case $(a,b)=(k-1,0)$
with $k\in\ZZ_{\ge1}$.
We 
have the de Rham complex (\cite[Corollary 2.4.2]{DLLZ2022logarithmicJAMS})        \begin{equation}
        0\to \BdR^{\la}\to \OBdR^{\la}\to \OBdR^{\la}\otimes_{\mO^{\sm}}\Omega_{\log}^{1,\sm}\to 0 ,\label{proofThmFontaine=Theta(a,b)=(0,0)}
        \end{equation} which after taking $\hat{\mE}_{(k-1,0)}$
        gives         \[
0\to \hat{\mE}_{(k-1,0)}(\BdR^{\la})\to \hat{\mE}_{(k-1,0)}(\OBdR^{\la})\to \hat{\mE}_{(k-1,0)}(\OBdR^{\la})\otimes\Omega_{\log}^{1,\sm}\to 0 ,
                \]
which is compatible with \(\Theta\)-actions, Hodge filtrations \(\Fil^{*}\), and \(t\)-adic filtrations \(\Fil^{t}_{*}\). 


Then we have the following diagram 
             \[
                        \begin{tikzcd}
\hat{\mE}_{(k-1,0)}(\mO^{\la})\arrow[r,"\iota"]\arrow[d,"N^{k}"] 
& \gr^{t}_{0} \hat{\mE}_{(k-1,0)}(\OBdR^{\la})\arrow[r,"\nabla"]\arrow[d,"\ti{N}^{k}"] 
& \gr^{t}_{0} \hat{\mE}_{(k-1,0)}(\OBdR^{\la})\otimes\Omega_{\log}^{1,\sm}\arrow[d,"\ti{N}^{k}\otimes 1"] 
\\
\hat{\mE}_{(k-1,0)}(\mO^{\la}(k))\arrow[r,"\iota"] 
& \gr^{t}_{-k} \hat{\mE}_{(k-1,0)}(\OBdR^{\la})\arrow[r,"\nabla"]
& \gr^{t}_{-k} \hat{\mE}_{(k-1,0)}(\OBdR^{\la})\otimes\Omega_{\log}^{1,\sm},
\end{tikzcd} 
                \] 
where  the rows are short exact sequences of perverse sheaves, and the category is in the category of filtered sheaves over \(\Fl_{\an}\), where all the objects are equipped with Hodge filtrations, and \(\Omega^{1,\sm}_{\log}\) is endowed with the trivial filtration where only non-trivial graded piece is \(\gr^{1}\). 

By Theorem \ref{mainthmPanPilloni}, the objects on the second row are concentrated in degree \(1\) for the natural t-structure, and those on the first row are in degree \([1,2]\). Therefore, to understand \(N^{k}\), we can take \(H^{1}(-)\), and we have the following diagram \[
                        \begin{tikzcd}
 \mscr{C}^{k} \arrow[r] \arrow[d]      & i_{*}\Sym^{k-1}D^{\sm} \arrow[r,"\nabla_{GM}"] \arrow[d]  & i_{*}\Sym^{k-1}D^{\sm}\otimes\Omega_{\log}^{1,\sm}\arrow[d]   \\
\hat{\mE}^{1}_{(k-1,0)}(\mO^{\la})\arrow[r,"\iota"]\arrow[d,"H^{1}(N^{k})"] 
& \gr^{t}_{0} \hat{\mE}^{1}_{(k-1,0)}(\OBdR^{\la})\arrow[r,"\nabla"]\arrow[d,"H^{1}(\ti{N}^{k})"] 
& \gr^{t}_{0} \hat{\mE}^{1}_{(k-1,0)}(\OBdR^{\la})\otimes\Omega_{\log}^{1,\sm}\arrow[d,"H^{1}(\ti{N}^{k})\otimes 1"] 
\\
\hat{\mE}^{1}_{(k-1,0)}(\mO^{\la}(k))\arrow[r,"\iota"] 
& \gr^{t}_{-k} \hat{\mE}^{1}_{(k-1,0)}(\OBdR^{\la})\arrow[r,"\nabla"]
& \gr^{t}_{-k} \hat{\mE}^{1}_{(k-1,0)}(\OBdR^{\la})\otimes\Omega_{\log}^{1,\sm},
\end{tikzcd} 
                \] 
with \(\hat{\mE}_{(a,b)}^{1}:=H^{1}(\hat{\mE}_{(a,b)}(-))\) and \(\mscr{C}^{k}:=\Fib(H^{1}(N^{k}))\), \(\hat{\mE}^{1}_{(k-1,0)}(\mO^{\la})\cong i_{*}\omega^{(0,k-1),\sm}\), and \(\hat{\mE}^{1}_{(k-1,0)}(\mO^{\la}(k))\cong i_{*}\omega^{(k,-1),\sm}\), which is compatible with Hodge filtrations and all the rows and columns are fiber sequences. 
In particular, \[\gr^{0}(\mscr{C}^{k})\cong \hat{\mE}^{1}_{(k-1,0)}(\mO^{\la}),\;
\gr^{k}(\mscr{C}^{k})\cong \hat{\mE}^{1}_{(k-1,0)}(\mO^{\la}(k))[-1].
\]

On the other hand, the first row shows that we have a filtered isomorphism \(\mscr{C}^{k}\cong i_{*}\dR(\Sym^{k-1}D^{\sm})\), which shows that \(\gr^{0}(\mscr{C}^{k})\cong i_{*}\omega^{(0,k-1),\sm}\), \(\gr^{k}(\mscr{C}^{k})\cong i_{*}\omega^{(k,-1),\sm}[-1]\),
and the connecting morphism \(\delta\) in \[\gr^{k}(\mscr{C}^{k})\to 
\mscr{C}^{k}\to \gr^{0}(\mscr{C}^{k})\xrightarrow{\delta} \gr^{k}(\mscr{C}^{k})[-1]
\] is by definition \(\theta^{k}\).               
\end{proof}


\section{Arithmetic corollary}\label{sectionArithCor}
In this section, we prove the classicality of modular forms (Theorem \ref{thmMainThmClassicality}) using the result we have obtained above. In fact, both (1) and (2) of Theorem \ref{thmFontaine=Theta} give a proof of Theorem \ref{thmMainThmClassicality}. 
 We will use Theorem \ref{thmFontaine=Theta} (1) below. 

We first recall the following result from \cite{Pan22}. 
\begin{thm}
\label{PanPilloni}
For $k\in\ZZ_{\ge1}$, we denote $$\tilde{\rho}_{k}:=\RHom_{\fb}((k-1,0),R\Gamma(K^{p},\QQ_{p})^{R-\la})[1].$$ Then
 we have a $B(\QQ_{p})\times\Gal_{\QQ_{p}}\times \TT(K^{p})$-equivariant isomorphism  \begin{align*}\tilde{\rho}_{k}\hatotimes_{\QQ_{p}}\CC_{p}\cong N_{0}\oplus N_{k}(-k),
\end{align*} where        \(
        N_{k}\cong M_{(1,-k)}^{\dagger}[0] ,
        \) and  $N_{0}$ lies in a distinguished triangle         \[
                R\Gamma(\Fl,\omega^{(1-k,0),\sm})[1]\to N_{0}\to M_{(1-k,0)}^{\dagger}[0]\xrightarrow{+1} .
                \]
        Here $(-)$ refers to the Tate twist, and \[R\Gamma(\Fl,\omega^{(1-k,0),\sm})\cong\varinjlim_{K_{p}}R\Gamma(\mX_{\Kpp},\omega^{(1-k,0)}).\]

In particular, \(\ti{\rho}_{k}\hatotimes \CC_{p}\) is Hodge-Tate of weight \(0,k\) (Definition \ref{dfnArithSenOp}), where $\Theta$ equals $0$ on $N_{0}$ and $-k$ on $N_{k}(-k)$.
\end{thm}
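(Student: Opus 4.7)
The plan is to deduce this global statement from the sheaf-theoretic local statement on the flag variety (Theorem \ref{mainthmPanPilloni}), essentially by taking $R\Gamma(\Fl,-)$ of the decomposition there.

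First I would identify the object of interest with a global section on $\Fl$. Combining primitive comparison (equation (\ref{alignEqualHatO})) with the identification $\mO^{\la}\cong \hat{\mO}^{R-\la}$ from Proposition 4.3.15 of \cite{Pan22}, one gets
\[
R\Gamma(K^{p},\QQ_{p})^{R-\la}\hat{\otimes}\CC_{p}\cong R\Gamma(\Fl,\mO^{\la}).
\]
Since $\RHom_{\fb}((k-1,0),-)$ is computed by the Chevalley--Eilenberg complex, which is a bounded complex built of functors of the form $\Hom_{\QQ_{p}}(\wedge^{\bullet}\fb,-)$, the functor commutes with $R\Gamma(\Fl,-)$; hence
\[
\tilde{\rho}_{k}\hat{\otimes}_{\QQ_{p}}\CC_{p}\cong R\Gamma\bigl(\Fl,\RHom_{\fb}((k-1,0),\mO^{\la})\bigr)[1].
\]

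Now I would apply Theorem \ref{mainthmPanPilloni} with $(a,b)=(k-1,0)$, which satisfies $a-b=k-1\ge 0$ since $k\ge 1$. It gives a $B(\QQ_{p})\times\Gal_{\QQ_{p}}\times\TT(K^{p})$-equivariant decomposition
\[
\RHom_{\fb}((k-1,0),\mO^{\la})\otimes\chi^{(1-k,0)}\cong \mcal{N}_{0}\oplus \mcal{N}_{k}(-k),
\]
with $\mcal{N}_{k}\cong i_{*}\omega^{(1,-k),\sm}[-1]$ and $\mcal{N}_{0}$ sitting in a distinguished triangle $\omega^{(1-k,0),\sm}\to \mcal{N}_{0}\to i_{*}\omega^{(1-k,0),\sm}[-1]\xrightarrow{+1}$. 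Taking $R\Gamma(\Fl,-)$ and applying the shift $[1]$, the summand $R\Gamma(\Fl,\mcal{N}_{k}(-k))[1]$ becomes $H^{0}(\infty,\omega^{(1,-k),\sm})[0](-k)=M^{\dagger}_{(1,-k)}[0](-k)$, and the triangle yields
\[
R\Gamma(\Fl,\omega^{(1-k,0),\sm})[1]\to N_{0}\to M^{\dagger}_{(1-k,0)}[0]\xrightarrow{+1}
\]
after defining $N_{0}:=R\Gamma(\Fl,\mcal{N}_{0})[1]$ and using $H^{i}(\infty,\omega^{(1-k,0),\sm})=0$ for $i>0$. The twist by $\chi^{(1-k,0)}$ is absorbed into how the $B(\QQ_{p})$-action on $M^{\dagger}_{(1,-k)}$, $M^{\dagger}_{(1-k,0)}$ and $R\Gamma(\Fl,\omega^{(1-k,0),\sm})$ is normalized, matching the conventions in the theorem statement. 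The statement about the arithmetic Sen operator $\Theta$ follows from Theorem \ref{mainthmPanPilloni}, where each $\mcal{N}_{i}(-i)$ is the $\Theta=-i$ generalized eigensheaf inside $\RHom_{\fb}((k-1,0),\mO^{\la})$.

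The only subtle point is justifying the interchange of $R\Gamma(\Fl,-)$ and $\RHom_{\fb}((k-1,0),-)$ in the solid setting. This is where I would be most careful: in the condensed/solid formalism of \cite{JRC2021solid}, the finite Chevalley--Eilenberg resolution reduces $\RHom_{\fb}$ to a bounded complex of solid internal Homs, and $R\Gamma(\Fl,-)$ commutes with finite limits, so the interchange is automatic. Equivariance for $B(\QQ_{p})\times\Gal_{\QQ_{p}}\times\TT(K^{p})$ is inherited from Theorem \ref{mainthmPanPilloni} since all the identifications we used are equivariant by construction.
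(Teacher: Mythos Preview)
Your proposal is correct and follows essentially the same approach as the paper: apply Theorem \ref{mainthmPanPilloni} at $(a,b)=(k-1,0)$, then take $R\Gamma(\Fl,-)[1]$, using the identification $R\Gamma(K^{p},\QQ_{p})^{R-\la}\hat{\otimes}\CC_{p}\cong R\Gamma(\Fl,\mO^{\la})$ (which the paper cites as Theorem 4.4.6 of \cite{Pan22}). Your justification for commuting $\RHom_{\fb}$ with $R\Gamma(\Fl,-)$ via the finite Chevalley--Eilenberg complex is the right one, and your computation of the global sections of $\mcal{N}_{k}$ and of the triangle for $\mcal{N}_{0}$ matches the paper's.
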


\begin{proof}
Let us give a proof here.
We know by Theorem \ref{mainthmPanPilloni},                 \[
                \RHom_{\fb}((k-1,0),\mO^{\la})[1]\otimes\chi^{(1-k,0)}\cong \mcal{N}_{0}\oplus \mcal{N}_{k}(-k) ,
                \]
and $\mcal{N}_{k}\cong i_{*}\omega^{(1,-k),\sm}$, and  we have a short exact sequence               \[
                0\to \omega^{(1-k,0),\sm}\to \mcal{N}_{0}\to i_{*}\omega^{(1-k,0),\sm}[-1]\to 0 .
                \]
                 We then take cohomology $R\Gamma(\Fl,-)$ 
                and obtain by Corollary \ref{corPrimitiveLA}, \begin{align*}
 \RHom_{\fb} &   
 \left(
 (k-1,0),R\Gamma(K^{p},\QQ_{p})^{R-\la}
 \right)       \hatotimes\CC_{p}\otimes\chi^{(1-k,0)}\\&\cong \RHom_{\fb}((k-1,0),R\Gamma(\Fl,\mO^{\la}))\otimes\chi^{(1-k,0)}\\
 &\cong R\Gamma(\Fl,\RHom_{\fb}((k-1,0),\mO^{\la}))\otimes\chi^{(1-k,0)}
 \\ &\cong R\Gamma(\Fl,\mcal{N}_{0})\oplus R\Gamma(\Fl,\mcal{N}_{k})(-k).
                \end{align*}
We are then done by putting $N_{0}:=R\Gamma(\Fl,\mcal{N}_{0})[1]$ and $N_{k}:=R\Gamma(\Fl,\mcal{N}_{k})$.
\end{proof}

As in Example \ref{egOlaArithSen}, we have                 \(
                D_{\mrm{arith}}(\mO^{\la})\hatotimes_{\QQ_{p}(\zeta_{p^{\infty}})}\CC_{p}\cong\mO^{\la} .
                \) Taking $R\Gamma(\Fl,-)$, this implies that \(R\Gamma(K^{p},\CC_{p})^{R-\la}\) admits an arithmetic Sen operator \(\Theta\) (Definition \ref{dfnArithSenOp}).  
Thus \(\ti{\rho}_{k}\hatotimes\CC_{p}\) also admits an arithmetic Sen operator $\Theta$, which is zero on $N_{0}$ and $-k$ on $N_{k}(-k)$.

Now we consider $\tilde{\rho}_{k}\hatotimes_{\QQ_{p}}B_{\dR}^{+}$,
Since $\tilde{\rho}_{k}$ is of Hodge-Tate weight $0,k$, As in definition \ref{dfnFontaineOp},  we have a distinguished triangle in \(D(\CC_{p}[\Theta])\) \begin{equation}\label{seqE0Rho}
                                N_{k}\to \hat{E}_{0}(\tilde{\rho}_{k}\hatotimes_{\QQ_{p}}B_{\dR}^{+})\to N_{0}\xrightarrow{+1} ,
                \end{equation}
                where 
\(\hat{E}_{0}\)
is as in Definition \ref{dfnBdRSemiLinear}.

\begin{thm}\label{mainthmFontaine=ThetaCohomologyVer} 
In terms of the isomorphism in Theorem \ref{thmPanPilloniCohomologyIntro},
$\Theta$ on $\hat{E}_{0}(\tilde{\rho}_{k}\hatotimes_{\QQ_{p}}B_{\dR}^{+})$  is induced by the Fontaine operator $N:N_{0}\to N_{k}$, given by       the composition  \[
         N:N_{0}\to M^{\dagger}_{(1-k,0)}\xrightarrow{\theta^{k}}M^{\dagger}_{(1,-k)}\cong N_{k} ,
         \] where $\theta^{k}$ is the classical theta operator as in \cite{Coleman1996classical}.
\end{thm}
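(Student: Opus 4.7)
The plan is to deduce Theorem \ref{mainthmFontaine=ThetaCohomologyVer} from the geometric Theorem \ref{thmFontaine=Theta}(1) by applying $R\Gamma(\Fl,-)$, exactly as announced in the introduction. Note that Theorem \ref{PanPilloni} is already obtained by applying $R\Gamma(\Fl,-)[1]$ to Theorem \ref{mainthmPanPilloni}, so the present statement is simply the extension of that principle from $\mO^{\la}$ to the filtered sheaf $\BdR^{+,\la}/t^{k+1}$, together with tracking of the induced action of $\Theta$.

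The first step is to promote the primitive-comparison identification $R\Gamma(K^{p},\CC_{p})^{R-\la}\cong R\Gamma(\Fl,\mO^{\la})$ of (\ref{alignEqualHatO}) to the filtered objects. The same argument applied to the pro-Kummer-\'etale sheaf $\mbb{B}^{+}_{\dR,\log}/t^{k+1}$, together with the filtered analogue of Proposition 4.3.15 of \cite{Pan22}, yields
\[
R\Gamma(K^{p},\QQ_{p})^{R-\la}\hat{\otimes}_{\QQ_{p}}\BdR^{+}/t^{k+1}\cong R\Gamma\bigl(\Fl,\BdR^{+,\la}/t^{k+1}\bigr),
\]
compatibly with the $\Gal_{\QQ_{p}}$-action and with the $t$-adic filtration by $\mO^{\la}(i)$. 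Since the $\fb$- and $\Gal_{\QQ_{p}}$-actions on $\Fl$ are global, the functors $R\Gamma(\Fl,-)$, $\RHom_{\fb}((k-1,0),-)$, $D_{\mrm{arith}}(-)$ and $E_{0}(-)$ all commute, producing a canonical identification
\[
E_{0}\bigl(\tilde{\rho}_{k}\hat{\otimes}\BdR^{+}/t^{k+1}\bigr)\cong R\Gamma\bigl(\Fl,\mE_{(k-1,0)}(\BdR^{+,\la}/t^{k+1})\bigr)[1],
\]
where the twist by $\chi^{(1-k,0)}$ is absorbed into the normalization of $\tilde{\rho}_{k}$. This identification intertwines the abstractly defined arithmetic Sen operator on the left with the geometric $\Theta$ on the right.

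Apply $R\Gamma(\Fl,-)[1]$ to the short exact sequence of perverse sheaves from Proposition \ref{propStrEabBdR}(1),
\[
0\to \mcal{N}_{k}\to \mE_{(k-1,0)}(\BdR^{+,\la}/t^{k+1})\to \mcal{N}_{0}\to 0,
\]
and use $\mcal{N}_{k}\cong i_{*}i^{*}\omega^{(1,-k),\sm}[-1]$ to compute $R\Gamma(\Fl,\mcal{N}_{k})[1]\cong M^{\dagger}_{(1,-k)}=N_{k}$, together with the analogous identification $R\Gamma(\Fl,\mcal{N}_{0})[1]\cong N_{0}$ from Theorem \ref{PanPilloni}. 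This recovers the distinguished triangle (\ref{seqE0Rho}), and shows that the arithmetic Fontaine operator $N:N_{0}\to N_{k}$ induced by $\Theta$ is nothing but the image under $R\Gamma(\Fl,-)[1]$ of the geometric Fontaine operator $N^{k}:\mcal{N}_{0}\to\mcal{N}_{k}$ of Construction \ref{constructionGeomFontaine}. By Theorem \ref{thmFontaine=Theta}(1), this geometric operator factors as
\[
\mcal{N}_{0}\twoheadrightarrow i_{*}i^{*}\omega^{(1-k,0),\sm}[-1]\xrightarrow{\theta^{k}} i_{*}i^{*}\omega^{(1,-k),\sm}[-1]\cong\mcal{N}_{k};
\]
taking $R\Gamma(\Fl,-)[1]$, the surjection becomes the canonical projection $N_{0}\to M^{\dagger}_{(1-k,0)}$ coming from the defining triangle of $N_{0}$ in Theorem \ref{PanPilloni}, the theta operator is preserved (it is the classical Coleman theta operator on spaces of overconvergent forms), and the final isomorphism becomes $M^{\dagger}_{(1,-k)}\cong N_{k}$. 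This gives exactly the claimed composition.

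The main obstacle is the bookkeeping in the first step: verifying that the abstractly defined arithmetic Sen operator on $\tilde{\rho}_{k}\hat{\otimes}\CC_{p}$ (via derived locally analytic vectors of $\Gal_{\QQ_{p}(\zeta_{p^{\infty}})}$-cohomology, as in \cite{Juan2022.09locallyShi}) matches the geometric $\Theta$ on $R\Gamma(\Fl,\mO^{\la})$ after primitive comparison, and that this match extends to the filtered complex $R\Gamma(\Fl,\BdR^{+,\la}/t^{k+1})$. This requires only that primitive comparison is $\Gal_{\QQ_{p}}$-equivariant and functorial in the derived solid category, which is already implicit in the proofs of Theorem \ref{mainthmPanPilloni} and of (\ref{equationDescentSenTheoryBdR}); once this compatibility is recorded, the remainder of the argument is a purely formal consequence of Theorem \ref{thmFontaine=Theta}(1) combined with taking global sections on $\Fl$.
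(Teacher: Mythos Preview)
Your proposal is correct and follows essentially the same approach as the paper's proof: establish the comparison $R\Gamma(K^{p},\QQ_{p})^{R-\la}\hat{\otimes}B_{\dR}^{+}/t^{k+1}\cong R\Gamma(\Fl,\BdR^{+,\la}/t^{k+1})$ via primitive comparison at the $A_{\inf}$ level together with $(-)^{\la}$-acyclicity, commute $R\Gamma(\Fl,-)$ with $D_{\mrm{arith}}$, $\RHom_{\fb}$ and $E_{0}$, and then apply $R\Gamma(\Fl,-)[1]$ to the perverse exact sequence of Proposition \ref{propStrEabBdR}(1) and to Theorem \ref{thmFontaine=Theta}(1). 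The paper makes the $A_{\inf}$-level comparison more explicit (citing \cite[Theorem 1.3]{Scholze13} and \cite[Propositions 6.14, 7.13, Corollary 7.18]{Scholze12}) and invokes Corollary 4.2.3 and Theorem 3.6.1 of \cite{Pan22} for passing to locally analytic vectors, but the structure of the argument is the same as yours.
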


\begin{proof}
We have a natural filtered morphism \[R\Gamma(K^{p},\QQ_{p})\hatotimes B_{\dR}^{+}/t^{n}\cong R\Gamma(\mX_{K^{p},\proket},\QQ_{p})\hatotimes B_{\dR}^{+}/t^{n}\to R\Gamma(\Fl,\BdR^{+}/t^{n}),
\] where the first isomorphism follows from \cite[Theorem 4.6.1]{DLLZ2019logarithmicFoundational}, and the second map is induced by the map \(\QQ_{p}\to \mbb{B}_{\dR,\log,\mX_{\Kpp}}|_{\mX_{K^{p}}}\).
 By Theorem \ref{thmPrimitiveComparison}, we know that the map induces isomorphisms on the graded piece, so the map itself is an isomorphism. Taking \((-)^{R-\la}\) on both sides and taking \(\varprojlim_{n}\), 
 we have 
a filtered isomorphism                \[
(R\Gamma(K^{p},\QQ_{p})^{R-\la}\hatotimes_{\QQ_{p}}B_{\dR}^{+})^{\wedge}_{t}\cong R\Gamma(\Fl,\BdR^{+,\la})                 .
                \] 
                We then take $\fb$-cohomology, invert \(t\) and $\hat{E}_{0}(-)$ to obtain a filtered isomorphism
        \[
        \hat{E}_{0}(\tilde{\rho}_{k}\hatotimes B_{\dR})\cong R\Gamma(\Fl,\hat{\mE}_{(k-1,0)}(\BdR^{\la})),
        \] that is compatible with the arithmetic Sen operator \(\Theta\).
Then Theorem \ref{mainthmFontaine=ThetaCohomologyVer} follows from Theorem \ref{thmFontaine=Theta} by taking $R\Gamma(\Fl,-)$.
\end{proof}

We will need to relate \(\ti{\rho}_{k}[\p_{f}]\)
with \(\rho_{f}\). We study more closely the Eichler-Shimura relation.
\begin{construction}\label{constructionEichlerShimura}
We define \[\hat{\TT}^{S}:=\varprojlim_{K_{p},n}\im(\TT^{S}\to \End(R\Gamma_{\et}(X_{\Kpp,\bar{\QQ}},\ZZ/p^{n})),
\] then by \cite[Corollary 9.11]{DospinescuPaskunasSchaen2020infinitesimalFamily}, it has only finitely many open maximal ideals \(\mm\). 

Replacing \(\hat{\TT}^{S}\) by \(\hat{\TT}^{S}\otimes_{\ZZ_{p}}W(\FF)\) for large enough finite extension \(\FF\) over \(\FF_{p}\), we assume that \(\hat{\TT}^{S}/\mm\cong \FF\) for any open maximal ideal \(\mm\subset \hat{\TT}^{S}\). Moreover, by \cite[Corollary 5.1.11]{Scholze15}, there is a continuous \(2\)-dimensional determinant \(D\) of \(\Gal_{\QQ,S}\) valued in \(\hat{\TT}^{S}\). 

For any open maximal ideal \(\mm\) of \(\hat{\TT}^{S}\), 
there is the determinant \(D_{\mm}:=D|_{\hat{\TT}^{S}/\mm}\) valued in \(\FF\). 
Let \(R_{\mm}^{ps}\) denote the pseudo-deformation ring over \(W(\FF)\) of the determinant of \(D_{ \mm}\) as in \cite{Chenevier2014determinants}, which is a Noetherian complete local ring.
Let \(R^{ps}:=\prod_{\mm}R_{\mm}^{ps}\), where 
the product is taken over the finite set of open maximal ideals of \(\hat{\TT}^{S}\). 
Then by the universal property, 
there is a unique continuous morphism \(R^{ps}\to \hat{\TT}^{S}\). 
In particular, we have an action of \(R^{ps}\) on \(R\Gamma(K^{p},W(\FF))\), and thus also on \(M_{\chi}^{\dagger}\) by Theorem \ref{PanPilloni}. 

Let us define the analytic ring structure \(R^{ps}_{\mm,\square}\) on \(R^{ps}_{\mm}\) by putting for any profinite set \(S\), \[R^{ps}_{\square}[S]:=\varprojlim_{i}R^{ps}[S_{i}], 
\] where the limit is taken over finite quotients \(S_{i}\) of \(S\). 

Then by construction, 
\(R^{ps}_{\square}[\Gal_{\QQ,S}]\) acts on \(R\Gamma(K^{p},W(\FF)/p^{n})\). 
Moreover, for any \(g\in \Gal_{\QQ,S}\), let \(D^{univ}\) be the universal determinant over \(R^{ps}\), and denote \(f_{g}(X):=D^{univ}(X-g)\), which is a polynomial of degree \(2\). 
Then by the Eichler-Shimura relation, the action of \(f_{\Frob_{l}}(\Frob_{l})\) is zero on \(H^{i}_{\et}(X_{\Kpp,\bar{\QQ}},W(\FF)/p^{n})\) for \(l\notin S\). 

Let \(I\) denote the \emph{closed} two-sided idea of \(R^{ps}\) generated by \(f_{\Frob_{l}}(\Frob_{l})\) for \(l\notin S\). Then by Chebotarev's density, the action of \(R^{ps}_{\square}[\Gal_{\QQ,S}]\) on \(\ti{H}^{i}(K^{p},W(\FF))\) factors through \(R^{ps}_{\square}[\Gal_{\QQ,S}]/I\).
\end{construction}



\begin{prop}\label{propGalRepIsClassical}
Assume that $f\in M_{(1,-k)}^{\dagger}(K^{p})$ is an overconvergent modular eigenform  of weight $1+k$ with $k\in\ZZ_{\ge 1}$ such that its associated Galois representation $\rho_{f}$ is absolutely irreducible. Then the following are equivalent:

(1) There exists a modular eigenform $f'\in M_{(1,-k)}(K^{p})$ such that $\rho_{f}\cong\rho_{f'}$;

(2) $\rho_{f}$ is de Rham at $p$. 
\end{prop}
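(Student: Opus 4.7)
The easy direction (1) $\Rightarrow$ (2) is standard: if $f'\in M_{(1,-k)}(K^p)$ is classical, then $\rho_{f'}$ is cut out of the étale cohomology of a Kuga-Sato variety, so it is de Rham at $p$ by Faltings's comparison theorem; since $\rho_f\cong\rho_{f'}$, the same holds for $\rho_f$. So the real content is (2) $\Rightarrow$ (1), and here I would run the argument sketched in the introduction, carefully assembling the pieces proved earlier.

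The plan is to localize everything at $\p_f$. By Theorem \ref{PanPilloni} localized at $\p_f$, we have a decomposition $(\tilde{\rho}_{k})_{\p_f}\hat{\otimes}_{\QQ_p}\CC_p\cong (N_0)_{\p_f}\oplus (N_k)_{\p_f}(-k)$ with $(N_k)_{\p_f}\cong (M^{\dagger}_{(1,-k)})_{\p_f}$; since $f$ lives in the latter and is an eigenform, $(\tilde\rho_k)_{\p_f}[\p_f]\neq 0$. By Eichler-Shimura together with the classical argument of \cite{BLR91quotients} and the assumed absolute irreducibility of $\rho_f$, we obtain $\tilde{\rho}_{k,L}[\p_f]\cong\rho_f\otimes_L W$ for some topological $B(\QQ_p)\times\TT(K^p)$-module $W\neq 0$. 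Because $\rho_f$ is Hodge-Tate of weights $0,k$ (read off from the isomorphism above), Proposition \ref{propFontaineOpeClassical} applies to each $L$-line, and de Rhamness of $\rho_f$ is equivalent to the vanishing of its Fontaine operator $N^k$. So assuming (2), the induced Fontaine operator on the eigenspace $\tilde{\rho}_{k,L}[\p_f]$ must vanish.

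Now I would invoke Theorem \ref{mainthmFontaine=ThetaCohomologyVer} to identify this Fontaine operator, which a priori is an abstract piece of Sen theory on $E_0(\tilde\rho_k\hat\otimes B_{\dR}^+/t^{k+1})$, with the concrete composition
\[
N^{k}:(N_0)_{\p_f}\longrightarrow (M^{\dagger}_{(1-k,0)})_{\p_f}\xrightarrow{\;\theta^{k}\;}(M^{\dagger}_{(1,-k)})_{\p_f}\cong (N_k)_{\p_f}.
\]
Here the first arrow is the quotient map from the exact sequence of Theorem \ref{PanPilloni}, whose kernel is $\varinjlim_{K_p}H^1(\mX_{\Kpp},\omega^{(1-k,0)})_{\p_f}$. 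Restricting to $\tilde\rho_{k,L}[\p_f]$ and using that $\theta^k$ is injective on overconvergent forms (visible from its effect on $q$-expansions: it multiplies the $n$-th Fourier coefficient by $n^k$), the vanishing of $N^k$ forces the nonvanishing of $H^1(\Fl,\omega^{(1-k,0),\sm})_L[\p_f]\neq 0$.

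Finally, Serre duality on the compactified modular curve gives $\varinjlim_{K_p}H^1(\mX_{\Kpp},\omega^{(1-k,0)})\cong \varinjlim_{K_p}H^0(\mX_{\Kpp},\omega^{(1,-k)}(-C))^{\vee}$, so a nonzero $\p_f$-eigenvector of the LHS is Hecke-dual to a nonzero classical cuspidal $\TT^S$-eigenform $f'\in M_{(1,-k)}(K^pK_p)$ for some sufficiently small $K_p$, with the same $\TT^S$-eigensystem as $f$. By the Eichler-Shimura relation applied to $f'$, we get $\rho_{f'}\cong\rho_f$, finishing the proof. The only step requiring genuine care is the control of $W$: one must check that passing from $\tilde{\rho}_{k,L}[\p_f]$ to its $\rho_f$-isotypic part really does transfer the vanishing of the Fontaine operator — here the absolute irreducibility of $\rho_f$ and the $\Gal_{\QQ_p}$-equivariance of the isomorphism in Theorem \ref{PanPilloni} are what make the argument go through, and this is the step I would double-check most carefully.
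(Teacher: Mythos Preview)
Your approach matches the paper's proof closely, but there are two points you gloss over that the paper handles explicitly and that you should tighten.

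First, the claim that $\rho_f$ is ``Hodge--Tate of weights $0,k$ (read off from the isomorphism above)'' is not quite justified: the decomposition $(\tilde\rho_k)_{\p_f}\hat\otimes\CC_p\cong (N_0)_{\p_f}\oplus (N_k)_{\p_f}(-k)$ only tells you the Hodge--Tate weights of $\rho_f$ lie in $\{0,k\}$. That $k$ occurs follows from $f\in N_{k,L}[\p_f]\ne 0$, but to see that $0$ also occurs the paper invokes Lemma~\ref{lemDetHTwtk}: $\det\rho_f$ has Hodge--Tate weight $k$, so the two weights must be $0$ and $k$. This matters twice in your argument: Proposition~\ref{propFontaineOpeClassical} is stated under that hypothesis, and more importantly you need $N_{0,L}[\p_f]\ne 0$ to pass from ``$N^k$ vanishes'' to ``$\Ker(N^k)\ne 0$''. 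The paper gets the latter from $(\rho_f\otimes\CC_p)^{\Theta=0}\otimes (W\hat\otimes\CC_p)\cong N_{0,L}[\p_f]$ together with $W\ne 0$.

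Second, your assertion that $\theta^k$ is injective on overconvergent forms is not literally true when $k=1$: in weight $0$ the constants lie in the kernel. The paper notes this (Lemma~\ref{lemThetakInjective}) and observes that such constants do not contribute to $\p_f$ when $\rho_f$ is irreducible, so $\theta^k$ is injective on $M^{\dagger}_{(1-k,0),L}[\p_f]$, which is what you actually use. Also note that Serre duality gives $H^1(\mX_{\Kpp},\omega^{(1-k,0)})\cong H^0(\mX_{\Kpp},\omega^{(k,-1)}(-C))^{\vee}$ rather than $\omega^{(1,-k)}(-C)$; the distinction is a determinant twist, harmless for the conclusion but worth writing correctly. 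With these two details filled in, your argument is the paper's.
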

\begin{rmk}
The implication from (1) to (2)  is known by \cite{Saito1997modular}.
\end{rmk}
\begin{proof}
Assume that the coefficients of $f$ lie in $L$, which is a finite extension of $W(\FF)[1/p]$, such that \(\rho_{f}\) is defined over \(L\). By Construction \ref{constructionEichlerShimura}, \(\rho_{f}\) determines a map \(\chi_{f}:R^{ps}\to L\), and since \(\TT^{S}\) is dense in \(\hat{\TT}^{S}\), the action of \(R^{ps}\) on \(f\) factors through \(\chi_{f}\). 

We will write $(-)_{L}$ for $-\otimes_{\QQ_{p}}L$. 
We denote by $\p_{f}$ the kernel of $\chi_{f}:R^{ps}\otimes L\to L$.
For any $R^{ps}\otimes L$-module $V$ concentrated in degree \(0\), let $V[\p_{f}]$
denote the subspace where the action of $\p_{f}$ is zero. 


For $k\in\ZZ_{\ge 2}$, $H^{0}(\Fl,\omega^{(1-k,0),\sm})=0$, and when  $k=1$, $H^{0}(\Fl,\omega^{(1-k,0),\sm})$ consists of locally constant functions, which in particular do not give rise to irreducible Galois representations $\rho_{f}$. Hence we know that for $k\in\ZZ_{\ge1}$, $H^{0}(\Fl,\omega^{(1-k,0),\sm})_{L,\p_{f}}=0$, and $R\Gamma(\Fl,\omega^{(1-k,0),\sm})_{L,\p_{f}}$ is concentrated in degree $1$.
By Theorem \ref{PanPilloni}, $(N_{0})_{\p_{f}}$ and $(\tilde{\rho}_{k})_{\p_{f}}$ are concentrated in degree $0$. Hence $\tilde{\rho}_{k,L}[\p_{f}]$ is a solid $\Gal_{\QQ}\times B(\QQ_{p})\times \TT(K^{p})$-module that is concentrated in degree \(0\).

\begin{lem}\label{lemIsoTopological}
There exists a solid $B(\QQ_{p})\times \TT(K^{p})$-modules $W$ over $L$, such that we have a $B(\QQ_{p})\times \TT(K^{p})$-equivariant isomorphism in \(\Rep_{\QQ_{p,\square}}(\Gal_{\QQ})\)                \[
                \tilde{\rho}_{k,L}[\p_{f}]\cong \rho_{f}\otimes_{L}W .
                \]
\end{lem}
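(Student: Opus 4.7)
The plan is to set
\[
W := \Hom_{L[\Gal_{\QQ}]}(\rho_{f},\tilde{\rho}_{k,L}[\p_{f}]),
\]
with the topology induced from $\Hom_{L}(\rho_{f},\tilde{\rho}_{k,L}[\p_{f}])$ (which, after choosing a basis of $\rho_{f}$, is just a finite power of $\tilde{\rho}_{k,L}[\p_{f}]$, hence LB). Since the actions of $B(\QQ_{p})$ and $\TT(K^{p})$ on $\tilde{\rho}_{k,L}[\p_{f}]$ commute with that of $\Gal_{\QQ}$, they descend to continuous actions on $W$, and the evaluation map
\[
\mathrm{ev}\colon \rho_{f}\otimes_{L}W\longrightarrow \tilde{\rho}_{k,L}[\p_{f}]
\]
is automatically $\Gal_{\QQ}\times B(\QQ_{p})\times \TT(K^{p})$-equivariant and continuous. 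The task is to verify that $\mathrm{ev}$ is a homeomorphic isomorphism.

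The next step is to show that every irreducible $\Gal_{\QQ}$-subquotient of $\tilde{\rho}_{k,L}[\p_{f}]$ is isomorphic to $\rho_{f}$. By the Eichler--Shimura relation, for every prime $l\notin S$ the element $\Frob_{l}$ acts on each such subquotient with the same characteristic polynomial $X^{2}-\bar{T}_{l}X+l\bar{S}_{l}$ as it does on $\rho_{f}$, where $\bar{T}_{l},\bar{S}_{l}$ are the images of $T_{l},S_{l}$ in $\TT^{S}/\p_{f}\hookrightarrow L$. Together with Chebotarev density and the pseudo-representation argument of \cite{BLR91quotients}, the absolute irreducibility of $\rho_{f}$ forces every irreducible subquotient to be isomorphic to $\rho_{f}$. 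Since $\End_{L[\Gal_{\QQ}]}(\rho_{f})=L$ by Schur's lemma, Morita theory (applied to any finite-dimensional $\Gal_{\QQ}$-stable subspace and then taking a colimit) shows that $\mathrm{ev}$ is at least an isomorphism of abstract $L[\Gal_{\QQ}\times B(\QQ_{p})\times \TT(K^{p})]$-modules.

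The main obstacle will be promoting this algebraic isomorphism to a homeomorphism, since Morita theory is purely algebraic. The cleanest route I foresee is via Jacobson density: because $\rho_{f}$ is absolutely irreducible, the image of $L[\Gal_{\QQ}]\to \End_{L}(\rho_{f})$ is all of $\End_{L}(\rho_{f})$, so one can choose an element $e\in L[\Gal_{\QQ}]$ that acts on $\rho_{f}$ as a rank-one idempotent, and a dual element realizing a basis of $\rho_{f}$. The same elements act continuously on $\tilde{\rho}_{k,L}[\p_{f}]$, exhibiting a continuous $\Gal_{\QQ}$-equivariant splitting $\tilde{\rho}_{k,L}[\p_{f}]\cong \rho_{f}\otimes_{L}e\cdot\tilde{\rho}_{k,L}[\p_{f}]$ and identifying $W$ topologically with the closed subspace $e\cdot \tilde{\rho}_{k,L}[\p_{f}]$. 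Under this identification $\mathrm{ev}$ becomes the tautological map in the category of topological $L$-vector spaces, hence a homeomorphism, and this is compatible with the actions of $B(\QQ_{p})\times \TT(K^{p})$ since those commute with $e$.
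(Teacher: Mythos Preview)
Your proposal is correct and matches the paper's argument. The paper is slightly more direct: rather than arguing through irreducible subquotients and then invoking Jacobson density, it observes at the outset (using Eichler--Shimura on $\tilde{\rho}_{k,L}[\p_{f}]$ itself, Chebotarev, and \cite{BLR91quotients}) that the $L[\Gal_{\QQ}]$-action factors through $\End_{L}(\rho_{f})\cong M_{2}(L)$, after which both the Morita decomposition $W=\Hom_{\End_L(\rho_f)}(\rho_f,\tilde\rho_{k,L}[\p_f])$ and the topological splitting via matrix idempotents are immediate --- in particular this is what guarantees that your chosen $e$ really acts as an idempotent on the whole of $\tilde\rho_{k,L}[\p_f]$ and not just on $\rho_f$.
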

\begin{proof}[Proof of the lemma]
We know \(\ti{H}^{0}(K^{p},\QQ_{p})^{\la}_{L,\p_{f}}\cong 0\) as \(\rho_{f}\) is irreducible, and thus \(\ti{\rho}_{k,L,\p_{f}}\cong \Hom_{\fb}((k-1,0),\ti{H}^{1}(K^{p},\QQ_{p})^{\la}_{\p_{f}})\). In particular, by Construction \ref{constructionEichlerShimura},
the action of \(\Gal_{\QQ}\) on \( \ti{\rho}_{k,L}[\p_{f}] \) factors through \(H^{0}(R^{ps}_{\square}[\Gal_{\QQ,S}]/I\hatotimes_{R^{ps},\chi_{f}}L)\). 
Note that \(H^{0}(R^{ps}_{\square}[\Gal_{\QQ,S}]\hatotimes_{R^{ps},\chi_{f}}L)\cong L_{\square}[\Gal_{\QQ,S}]\), and the image of \(I\) in \(L_{\square}[\Gal_{\QQ,S}]\) is the closed 2-sided ideal \(\bar{I}\) generated by \(g^{2}-\Tr(\rho_{f}(g))\cdot g+\det(\rho_{f}(g))\).
By the proof in \cite{BLR91quotients}, we know that \(L_{\square}[\Gal_{\QQ,S}]/\bar{I}\cong \mrm{End}_{L}(\rho_{f})\cong M_{2\times 2}(L)\). 



In this way, $\tilde{\rho}_{k,L}[\p_{f}]$
is a solid  \( \End_{L}(\rho_{f})\times B(\QQ_{p})\times \TT(K^{p}) \)-module. Hence if we put  \begin{align}\label{alignEq0}
                W:=\Hom_{\End_{L}(\rho_{f})}(\rho_{f},\tilde{\rho}_{k,L}[\p_{f}]) ,
\end{align} by Morita equivalence,    we have              \(
                                \tilde{\rho}_{k,L}[\p_{f}]\cong \rho_{f}\otimes_{L}W .\)
\end{proof}
Now by Theorem \ref{PanPilloni}, we have \begin{align}\label{alignEq3}
(\rho_{f}\otimes_{\QQ_{p}}\CC_{p})\otimes_{L\otimes\CC_{p}}(W\hatotimes_{\QQ_{p}}\CC_{p})\cong 
                \tilde{\rho}_{k,L}[\p_{f}]\hatotimes\CC_{p}\cong N_{0,L}[\p_{f}]\oplus N_{k,L}[\p_{f}](-k) .
\end{align} Considering the arithmetic Sen operator $\Theta$, we know that \begin{align}\label{alignEq4}
(\rho_{f}\otimes_{\QQ_{p}}\CC_{p})^{\Theta=0}\otimes (W\hatotimes\CC_{p})\cong N_{0,L}[\p_{f}] ,\\
(\rho_{f}\otimes_{\QQ_{p}}\CC_{p})^{\Theta=-k}(k)\otimes (W\hatotimes\CC_{p})\cong N_{k,L}[\p_{f}].
\end{align}
Theorem \ref{PanPilloni} tells us that       $N_{k,L}[\p_{f}]=M_{(1,-k),L}^{\dagger}[\p_{f}]\ne 0$. So $W$ is non-zero. 
By (\ref{alignEq3}), we know 
the Hodge-Tate weights of $\rho_{f}$ lie in $\{0,k\}$. On the other hand, by Lemma \ref{lemDetHTwtk} below, we know $\det\rho_{f}$ is of Hodge-Tate weight $k$, so we know that $\rho_{f}$ has precisely two distinct weights, $0$ and $k$. 

So we can consider its Fontaine operator, which is compatible with the Fontaine operator of $\hat{E}_{0}(\tilde{\rho}_{k}\hatotimes_{\QQ_{p}}B^{+}_{\dR})$ as in Definition \ref{dfnFontaineOp}. By Theorem \ref{mainthmFontaine=ThetaCohomologyVer}, we know it is induced by                 \[
                N[\p_{f}]:N_{0,L}[\p_{f}]\xrightarrow{p[\p_{f}]} M^{\dagger}_{(1-k,0),L}[\p_{f}]\xrightarrow{\theta^{k}}M^{\dagger}_{(1,-k),L}[\p_{f}]\cong  N_{k,L}[\p_{f}].
                \] Note that the kernel of $p[\p_{f}]$ is $H^{1}(\Fl,\omega^{(1-k,0),\sm})_{L}[\p_{f}]$. 
By Lemma \ref{lemThetakInjective} below, $M^{\dagger}_{(1-k,0),L}[\p_{f}]\xrightarrow{\theta^{k}}M^{\dagger}_{(1,-k),L}[\p_{f}]$
is injective, so \[\Ker(N[\p_{f}])\cong \Ker(p[\p_{f}])\cong H^{1}(\Fl,\omega^{(1-k,0),\sm})_{L}[\p_{f}].\]

By Serra duality, we know               
                \begin{align*}
 H^{1}(\Fl,&\omega^{(1-k,0),\sm})\cong\varinjlim_{K_{p}}H^{1}(\mX_{\Kpp},\omega^{(1-k,0)})\\&\cong \varinjlim_{K_{p}}H^{0}(\mX_{\Kpp},\omega^{(k,-1)}(-C))^{\vee}\cong \varinjlim_{K_{p}}S_{(k,-1)}(\Kpp)^{\vee}.
                \end{align*}
Therefore, by Proposition \ref{propFontaineOpeClassical}, $\rho_{f}$ is de Rham, if and only if $N[\p_{f}]=0$, if and only if $\Ker(N[\p_{f}])\ne 0$, if and only if $H^{1}(\Fl,\omega^{(1-k,0),\sm})_{L}[\p_{f}]\ne 0$, if and only if $\p_{f}$ is associated to a classical eigenform. This finishes the proof.
\end{proof}
\begin{lem}\label{lemDetHTwtk}
For any overconvergent eigenform $f\in M_{(1,-k)}^{\dagger}$ with $k\in\ZZ_{\ge 1}$, $\det\rho_{f}$ is Hodge-Tate of weight $k$.
\end{lem}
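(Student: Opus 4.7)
The plan is to compute $\det\rho_f$ directly as a global Galois character using the Eichler-Shimura relation, then read off its Hodge-Tate weight via class field theory.

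By the Eichler-Shimura relation invoked in the proof of Lemma \ref{lemIsoTopological}, the operator $\rho_f(\Frob_l)$ satisfies $X^2 - a_l X + l s_l = 0$ for every $l \notin S$, where $a_l, s_l$ are the $T_l, S_l$-eigenvalues of $f$; taking determinants yields $\det\rho_f(\Frob_l) = l\cdot s_l$. To identify $s_l$, note that $S_l = [\GL_2(\ZZ_l)\,\mathrm{diag}(l,l)\,\GL_2(\ZZ_l)]$ is concentrated on the single central coset $\mathrm{diag}(l,l)\GL_2(\ZZ_l)$ and depends only on data at $l\ne p$. Its action on a smooth section $f$ of $\omega^{(1,-k),\sm}$ --- which, away from the center-action datum, is the classical weight-$(k+1)$ line bundle --- is therefore given by the same formula as on the classical space of weight-$(k+1)$ modular forms, namely
\[
s_l = l^{k-1}\epsilon(l),
\]
where $\epsilon:(\ZZ/N)^*\to L^*$ is a finite-order (nebentypus) character attached to $f$, with $N$ divisible by the primes of $S\setminus\{p\}$.

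Combining, $\det\rho_f(\Frob_l) = l^k\epsilon(l)$ for every $l\notin S$; by Chebotarev density this determines
\[
\det\rho_f = \chi_{\mathrm{cycl}}^{-k}\cdot \tilde{\epsilon},
\]
where $\tilde{\epsilon}$ is the finite-order Galois character associated to $\epsilon$ via class field theory. Since $\chi_{\mathrm{cycl}}$ has Hodge-Tate weight $-1$ by the paper's convention and $\tilde{\epsilon}$ has Hodge-Tate weight $0$, we conclude that $\det\rho_f\big|_{\Gal_{\QQ_p}}$ is Hodge-Tate of weight $k$. The only point requiring care is verifying that the classical $S_l$-eigenvalue formula persists for overconvergent eigenforms; this is automatic because $S_l$ involves only level data at $l\ne p$ and acts on the smooth part of the sheaf, both of which coincide with the classical weight-$(k+1)$ situation, so overconvergence at $p$ is irrelevant for this computation.
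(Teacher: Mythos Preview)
Your proposal takes the same approach as the paper: compute $\det\rho_f(\Frob_l)=l\cdot s_l$ via Eichler--Shimura, identify $s_l$ as $l^{k-1}$ times a finite-order character value, and conclude via Chebotarev. The one step you gloss over is the existence of a \emph{finite-order} nebentypus $\epsilon$ for an overconvergent eigenform of a priori arbitrary level at $p$; your justification that ``$S_l$ involves only level data at $l\neq p$'' does not by itself furnish such an $\epsilon$, since the diamond action at $p$ could in principle have large image. The paper handles this by exploiting the smoothness of the $\GL_2(\QQ_p)$-action on $M^\dagger_{(1,-k)}$: any $f$ is fixed by $N(p^m\ZZ_p)$ for some $m$, and translating by $\begin{pmatrix}p^{-m}&0\\0&1\end{pmatrix}$ (which preserves the $\TT^S$-eigenvalues) produces an $N(\ZZ_p)$-invariant, hence $\Gamma_1(p^n)$-invariant, form with the same eigensystem. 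At that finite level the classical formula $S_l=l^{k-1}\langle l\rangle$ with $\langle -\rangle$ a finite-order Dirichlet character is standard. With this reduction inserted, your argument is complete and matches the paper's.
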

\begin{proof}
First, we prove that $\det\rho_{f}$ is of Hodge-Tate weight $k$. We know for $l\nmid p$ such that $K^{p}=\prod_{l\nmid p}K_{l}$ is hyperspecial at $l$, $\det\rho_{f}(\Frob_{l})=lS_{l}$ by the Eichler-Shimura relation, where  \( S_{l}=\left[G(\ZZ_{l})\begin{pmatrix}
l^{-1} & 0 \\ 0 & l^{-1}
\end{pmatrix}G(\ZZ_{l}) \right]. \)
Here we put an inverse so that our Hecke operator acts on the left. We claim that there exist $N\in\ZZ_{>0}$ and $\chi:(\ZZ/N)^{\times}\to \bar{\QQ}_{p}^{\times}$, such that for any $l\nmid N$ such that $K_{l}$ is hyperspecial, $S_{l}=l^{k-1}\chi(l)$.  
If $f$ is $N(\ZZ_{p})$-invariant, then for large enough $n$, $f$ is $\Gamma_{1}(p^{n})$-invariant, where $\Gamma_{1}(p^{n}):=\left\{\begin{pmatrix}
a& b\\c &d
\end{pmatrix}:a-1,b\in p^{n}\ZZ_{p}\right\}\subset \GL_{2}(p^{n})$ is the congruence subgroup. Then we know that $S_{l}=l^{k-1}\langle l\rangle$, where $\langle -\rangle$ is the diamond operator, which is a character of finite order. In general, since the action of $\GL_{2}(\QQ_{p})$ on $f$ 
is smooth, we know $f$ is invariant for some $N(p^{m}\ZZ_{p})=\begin{pmatrix}
1& p^{n}\ZZ_{p}\\0 &1
\end{pmatrix}$ for $m$ large enough. Thus we know         \( \begin{pmatrix}
p^{-n}& 0\\0 & 1
\end{pmatrix} f\) is $N(\ZZ_{p})$-invariant, and it is also an eigenform with the same Hecke eigenvalue  
as $f$. So we conclude by applying the above argument to \( \begin{pmatrix}
p^{-n}& 0\\0 & 1
\end{pmatrix} f\). 

Given the claim, we know         \( \det\rho_{f}(\Frob_{l})=l^{k}\chi(l), \)
and $\chi$ is of finite order. In particular, when restricting to an open subgroup, we know that $\det\rho_{f}$ coincides with $\chi_{\mrm{cycl}}^{k}$, and thus is of Hodge-Tate weight $k$.
\end{proof}
\begin{lem}\label{lemThetakInjective}
For $k\in\ZZ_{\ge 1}$ and $f\in M_{(1,-k)}^{\dagger}[\p_{f}]$, if $\rho_{f}:\Gal_{\QQ}\to \GL_{2}(\bar{\QQ}_{p})$ is irreducible, then $\theta^{k}:M_{(1-k,0)}^{\dagger}[\p_{f}]\to M_{(1,-k)}^{\dagger}[\p_{f}]$
is injective.
\end{lem}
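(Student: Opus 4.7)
The plan is to reduce to a $q$-expansion analysis at the cusps. By \cite[Section~4]{Coleman1996classical}, Coleman's theta operator acts on the Serre-Tate $q$-expansion at any cusp by $\theta\colon \sum_n a_n q^n \mapsto \sum_n n a_n q^n$, and hence $\theta^k\colon \sum_n a_n q^n \mapsto \sum_n n^k a_n q^n$. If $g \in M^\dagger_{(1-k,0),L}[\p_f]$ satisfies $\theta^k g = 0$, then the $q$-expansion of $g$ at every cusp has $a_n=0$ for all $n\ge 1$, so is a constant. Suppose for contradiction that $g\ne 0$. By the $q$-expansion principle for overconvergent modular forms, there is some cusp $c_0$ at which $a_{0,c_0}(g)\ne 0$, and on the Tate-curve formal neighborhood of $c_0$ we have $g = a_{0,c_0}(g)\cdot (dq/q)^{1-k}$.

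I would then compute the action of $T_l$ and $S_l$ on this constant $q$-expansion using the standard formula in weight $\ell=1-k$,
\[
T_l f(q) = l^{\ell-1} f(q^l) + \tfrac{1}{l}\sum_{j=0}^{l-1} f(\zeta_l^j q^{1/l}),
\]
which for $f$ constant gives $T_l g = (1+l^{-k})g$, and analogously $S_l g = l^{-k-1} g$. Since $g$ lies in $[\p_f]$, these are the images of $T_l$ and $S_l$ under $\TT^S\twoheadrightarrow \TT^S/\p_f$. By Eichler-Shimura, $\Tr\rho_f(\Frob_l) = 1+l^{-k}$ and $\det\rho_f(\Frob_l) = l\cdot S_l = l^{-k}$ for every $l\notin S$, so the characteristic polynomial of $\rho_f(\Frob_l)$ factors as $(X-1)(X-l^{-k})$. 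By Chebotarev density combined with the result of \cite{BLR91quotients} (already invoked in the proof of Lemma~\ref{lemIsoTopological}), the semisimplification of $\rho_f$ is then a direct sum of two characters, hence reducible---contradicting the irreducibility hypothesis.

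The delicate point, and the step I expect to require the most care, is to justify Coleman's formula $\theta^k(\sum_n a_n q^n) = \sum_n n^k a_n q^n$ in the overconvergent setting of arbitrary tame level $K^p$ and possibly negative weight $1-k$, together with a $q$-expansion principle strong enough to produce the cusp $c_0$. Both are classical and contained in \cite{Coleman1996classical, Coleman1997classical}: after choosing $K_p$ small enough the form $g$ descends to some $\mX_{K^pK_p}$, the theta operator there is induced from the Gauss-Manin connection on the universal generalized elliptic curve, and the standard $q$-expansion principle applies component by component. Once these are in hand, the rest of the argument is a direct unravelling of Eichler-Shimura plus Chebotarev.
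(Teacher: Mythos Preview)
Your approach is essentially the same as the paper's: both arguments use the $q$-expansion formula $\theta^{k}=(q\,\tfrac{d}{dq})^{k}$ to show that any element of the kernel has constant $q$-expansion, and then conclude that such an eigenform would force $\rho_{f}$ to be reducible. The paper's proof is terser—after obtaining a ``locally constant'' $f'$ it simply asserts ``for such $f'$, $\rho_{f'}$ will be reducible''—whereas you spell out the Hecke computation and invoke \cite{BLR91quotients}; so your write-up is in fact more complete on this point. One minor caveat: your formula for $T_{l}$ is the level-one formula and omits a possible nebentypus character coming from the tame level $K^{p}$; including it changes $(X-1)(X-l^{-k})$ to $(X-1)(X-\chi(l)l^{-k})$, which still factors and gives the same contradiction. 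Also note that in the paper's setup the $q$-expansion runs over indices in $\tfrac{1}{p^{n}}\NN$ rather than $\NN$, but your argument goes through unchanged since $n^{k}=0$ iff $n=0$.
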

\begin{proof}
Note that  $M^{\dagger}_{(1-k,0)}\xrightarrow{\theta^{k}}M^{\dagger}_{(1,-k)}$ is injective when $k>1$, and the kernel is precisely the locally constant function when $k=1$. To see this, we recall
that in terms of $q$-expansion, $\theta^{k}$ is given by $(q\frac{d}{dq})^{k}$. For any $f'\in\Ker(\theta^{k})$, there exists $n\in\NN$ such that 
$f'$ is fixed by $\begin{pmatrix}
1 & p^{n}\ZZ_{p}\\0 & 1
\end{pmatrix}\subset B(\QQ_{p})$, and we can write the $q$-expansion of $f'$, say $f'=\sum_{i\in \frac{1}{p^{n}}\NN}a_{n}q^{n}$. Then $\theta^{k}(f')=0$ implies that $a_{n}=0$ if $n\ne 0$, and thus $f'$ is locally constant. For such $f'$, $\rho_{f'}$ will be reducible. Thus we have   \( \Ker(\theta^{k})[\p_{f}]=0 \) and we are done.
\end{proof}
So far we have proven that the global Galois representation $\rho_{f}$ is modular if it is de Rham at $p$. Now we go on to prove the slightly stronger result that the form $f$ itself is a modular form. 
\begin{cor}\label{corClassicality}
Assume that $f\in M_{(1,-k)}^{\dagger}(K^{p})$ is an overconvergent modular $\TT^{S}$-eigenform  of weight $1+k$ with $k\in\ZZ_{\ge 1}$ such that its associated Galois representation $\rho_{f}$ is absolutely irreducible. Then
$f$ is a classical modular form if and only if $\rho_{f}$ is de Rham at $p$. 
\end{cor}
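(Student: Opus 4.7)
The forward implication follows from Saito's classical theorem that the Galois representation attached to a classical eigenform is de Rham. For the converse, Proposition \ref{propGalRepIsClassical} already produces a classical eigenform $f' \in M_{(1,-k)}(K^p)$ with $\rho_{f'} \cong \rho_f$, so the Hecke eigensystem $\p_f$ appears in the classical space. The content of the corollary is to upgrade this to \emph{the form $f$ itself} being classical, not merely some form sharing its eigensystem. The plan is to compare the $\p_f$-isotypic subspaces of $M^\dagger_{(1,-k),L}$ and of the classical space at the level $K_p$ stabilising $f$, and conclude by a finite-dimensional count.

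Concretely, re-examining the argument of Proposition \ref{propGalRepIsClassical} yields slightly more than is stated there: the de Rham hypothesis forces $N[\p_f] = 0$, whence by Lemma \ref{lemThetakInjective} the surjection $N_{0,L}[\p_f] \twoheadrightarrow M^\dagger_{(1-k,0),L}[\p_f]$ must be zero, so
\[
N_{0,L}[\p_f] \;\cong\; H^1(\Fl,\omega^{(1-k,0),\sm})_L[\p_f] \;\cong\; \varinjlim_{K_p} S_{(k,-1)}(K^pK_p)_L^{\vee}[\p_f]
\]
by Serre duality. Pairing this with the topological isomorphism $\tilde\rho_{k,L}[\p_f] \cong \rho_f \otimes_L W$ of Lemma \ref{lemIsoTopological} and the Hodge--Tate splitting of Theorem \ref{PanPilloni}, both $N_{0,L}[\p_f]$ and $N_{k,L}[\p_f]$ are identified with $W\hat\otimes_L\CC_p$ tensored by a one-dimensional Galois line (the respective Hodge--Tate components of $\rho_f \otimes \CC_p$). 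These Galois lines carry trivial $B(\QQ_p) \times \TT(K^p)$-action, and since $N_{k,L}[\p_f] \cong M^\dagger_{(1,-k),L}[\p_f]$, one obtains a $B(\QQ_p) \times \TT(K^p)$-equivariant identification
\[
M^\dagger_{(1,-k),L}[\p_f] \;\cong\; \varinjlim_{K_p} S_{(k,-1)}(K^pK_p)_L^{\vee}[\p_f],
\]
and irreducibility of $\rho_f$ excludes Eisenstein contributions, so the right-hand side is identified (after a second application of Serre duality and unwinding the weight twist by $\det$) with the classical space $\varinjlim_{K_p} M_{(1,-k)}(K^pK_p)_L[\p_f]$.

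Taking $K_p$-invariants on both sides for the compact open $K_p$ that fixes $f$ produces finite-dimensional $L$-vector spaces of equal dimension; since the natural inclusion $M_{(1,-k)}(K^pK_p)_L[\p_f] \hookrightarrow M^\dagger_{(1,-k)}(K^pK_p)_L[\p_f]$ is injective between spaces of the same dimension, it is an isomorphism, and $f$ lies in the classical subspace. The main technical obstacle is the passage from the infinite-level $B(\QQ_p)$-equivariant identification down to the fixed level $K_p$: this requires checking that the Serre-duality identification commutes suitably with $\p_f$-localisation, with admissibility of the $B(\QQ_p)$-action, and with the operation of taking $K_p$-fixed vectors, so that the dimension comparison at level $K_p$ is valid.
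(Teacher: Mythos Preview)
Your proposal follows the paper's argument closely through the identification
\[
M^{\dagger}_{(1,-k),L}[\p_{f}] \;\cong\; \varinjlim_{K_{p}} S_{(k,-1)}(K^{p}K_{p})_{L}^{\vee}[\p_{f}]
\]
as $B(\QQ_{p})\times\TT(K^{p})$-modules; this is exactly what the paper does (the paper then passes to $S_{(1,-k)}[\p_{f}]$ via the Petersson pairing rather than ``a second Serre duality'', but that is a cosmetic difference). The divergence is only in the very last step.

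There, however, your dimension-count argument has a real problem. The overconvergent space $M^{\dagger}_{(1,-k)}[\p_{f}]$ carries only a smooth $B(\QQ_{p})$-action (it is the stalk at the $B$-fixed point $\infty\in\Fl$), not a $G(\QQ_{p})$-action. So there is no meaning to ``the compact open $K_{p}\subset G(\QQ_{p})$ that fixes $f$'' on the overconvergent side, and the comparison you write as $M_{(1,-k)}(K^{p}K_{p})_{L}[\p_{f}]\hookrightarrow M^{\dagger}_{(1,-k)}(K^{p}K_{p})_{L}[\p_{f}]$ is not between $K_{p}$-invariants on both sides. If instead you take invariants under an open compact $K_{p}'\subset B(\QQ_{p})$ (which is what stabilises $f$), then the abstract isomorphism does give equal cardinalities, but you have not shown these are \emph{finite}: admissibility of $S_{(1,-k)}[\p_{f}]$ is for $G(\QQ_{p})$, and $K_{p}'$-invariants of a smooth admissible $G(\QQ_{p})$-representation for $K_{p}'\subset B(\QQ_{p})$ are not a priori finite-dimensional. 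This is not the routine bookkeeping you flag in your last paragraph --- it is the actual content of the step.

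The paper avoids this by extracting from the $B(\QQ_{p})$-action the operator $U_{p}$, observing that the abstract isomorphism is then $U_{p}\times\TT(K^{p})$-equivariant, and using that this action is \emph{semisimple} on the classical side (Petersson). Hence $M^{\dagger}_{(1,-k)}[\p_{f}]$ is semisimple for $U_{p}\times\TT(K^{p})$ as well, and one concludes by the $q$-expansion principle: each simultaneous eigenvector on the overconvergent side has the same $q$-expansion as some classical eigenvector. This bypasses any finiteness question.
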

\begin{rmk}
Note that we only assume that $f$ is an eigenform for the spherical Hecke algebra $\TT^{S}$ rather than \(\TT(K^{p})\). In particular, the result does not follow immediately from considering the $q$-expansion.
\end{rmk}
\begin{proof}
We consider the first map in (\ref{alignSeqExactOnSVbtog})                \[
                \Hom_{\fg}(\Sym^{k-1}V,\tilde{H}^{1,\la}(K^{p},L))[\p_{f}]\hookrightarrow \Hom_{\fb}((k-1,0),\tilde{H}^{1,\la}(K^{p},L))[\p_{f}]=\tilde{\rho}_{k,L}[\p_{f}] .
                \]
We claim that this is actually an equality.
                 If we denote \[\tilde{\rho}'_{k,L}:=\Hom_{\fg}(\Sym^{k-1}V,\tilde{H}^{1,\la}(K^{p},L)),\] then         \( \tilde{\rho}'_{k,L}[\p_{f}] \)
is a $\Gal_{\QQ}\times B(\QQ_{p})\times \TT(K^{p})$-subrepresentation of         \( \tilde{\rho}_{k,L}[\p_{f}]. \)

Recall that by Lemma \ref{lemIsoTopological}, we have  an isomorphism               \[
                \tilde{\rho}_{k,L}[\p_{f}]\cong \rho_{f}\otimes_{L}W .
                \]
If fixing an $\Gal_{\QQ_{p}}$-equivariant isomorphism $(\rho_{f}\otimes\CC_{p})^{\Theta=0}\cong L\otimes\CC_{p}$,
(\ref{alignEq4}) gives us an isomorphism         \( W\hatotimes\CC_{p}\cong \varinjlim_{\Kpp}S_{(k,-1)}(\Kpp)^{\vee}_{L}[\p_{f}]. \) Moreover, both sides have a canonical model over $L$, and the isomorphism is equivariant for the semi-linear action of $\Gal_{\QQ_{p}}$, so it descends to         \( W\cong \varinjlim_{\Kpp}(S_{(k,-1)}(\Kpp)^{\vee}_{L})^{\Gal_{\QQ_{p}}}[\p_{f}]. \)
In particular, we see that $W$ is an injective limit of finite dimensional spaces, and in particular, $W\hatotimes\CC_{p}\cong W\otimes \CC_{p}$ and thus         \( \tilde{\rho}_{k,L}[\p_{f}]\otimes\CC_{p}\cong \tilde{\rho}_{k,L}[\p_{f}]\hatotimes\CC_{p}. \)

From (\ref{alignEq4}), we know    that  \begin{align}\label{alignEq1}
                (\tilde{\rho}_{k,L}[\p_{f}]{\otimes}\CC_{p})^{\Theta=0}\cong \varinjlim_{K_{p}} S_{(k,-1)}(\Kpp)^{\vee}_{L}[\p_{f}],\;\\ (\tilde{\rho}_{k,L}[\p_{f}]{\otimes}\CC_{p})^{\Theta=-k}(k)\cong M_{(1,-k),L}^{\dagger}[\p_{f}] .
\end{align}

On the other hand, by taking $R\Gamma(\Fl,-)$ of Proposition \ref{propCalculateg-cohomoOnSV}, we know             \[
               \tilde{\rho}'_{k,L}\otimes \CC_{p} \cong \varinjlim_{K_{p}}S_{(k,-1)}(\Kpp)^{\vee }_{L} \oplus M_{(1,-k),L}(-k)  ,
                \] where by Serre duality,       we have identified  \[ H^{1}(\mX_{\Kpp,\CC_{p}},\omega^{(1-k,0)}_{X_{\Kpp}})\cong H^{0}(\mX_{\Kpp},\omega^{(k,-1)}_{X_{\Kpp}}(-C))^{\vee}\cong  S_{(k,-1)}(\Kpp)^{\vee}. \]
Therefore, we see that 
\begin{align}\label{alignEq2}
                (\tilde{\rho}_{k,L}'[\p_{f}]{\otimes}\CC_{p})^{\Theta=0}\cong \varinjlim_{K_{p}} S_{(k,-1)}(\Kpp)^{\vee}_{L}[\p_{f}],\; \\(\tilde{\rho}_{k,L}'[\p_{f}]{\otimes}\CC_{p})^{\Theta=-k}(k)\cong M_{(1,-k),L}[\p_{f}] .
\end{align}
Moreover, by Proposition \ref{propCalculateg-cohomoOnSV}, the map \[(\tilde{\rho}_{k,L}'[\p_{f}]{\otimes}\CC_{p})^{\Theta=-k}(k)\to (\tilde{\rho}_{k,L}[\p_{f}]{\otimes}\CC_{p})^{\Theta=-k}(k)\]
coincides with the natural inclusion 
\(M_{(1,-k),L}\hookrightarrow M_{(1,-k),L}^{\dagger}\).

Now consider $V:=\tilde{\rho}_{k,L}[\p_{f}]/\tilde{\rho}_{k,L}'[\p_{f}]$. Then again by \cite{BLR91quotients}, we know $V\cong \rho_{f}^{\oplus I}$, and by comparing (\ref{alignEq1}) and (\ref{alignEq2}),             \[
                (V\otimes \CC_{p})^{\Theta=0}\cong 0,\;(V\otimes\CC_{p})^{\Theta=-k}(k)\cong M^{\dagger}_{(1,-k),L}[\p_{f}]/M_{(1,-k),L}[\p_{f}] .
                \] 
                In particular, we know that $I=\emptyset$
                and $M_{(1,-k),L}[\p_{f}]=M^{\dagger}_{(1,-k),L}[\p_{f}]$.
\end{proof} 

\appendix
\addtocontents{toc}{\protect\setcounter{tocdepth}{1}}

\printbibliography

\end{document}